\documentclass[10pt]{article}
\usepackage{graphicx}
\usepackage{amsfonts,amssymb,latexsym,amsmath,amsthm}

\newtheorem{theorem}{Theorem}[section]
\newtheorem{definition}[theorem]{Definition}

\newtheorem{remark}[theorem]{Remark}

\newtheorem{example}[theorem]{Example}
\numberwithin{equation}{section}
\setlength{\textwidth}{6.25 in}
%\renewcommand(\thefootnote}{\ensuremath{\fnsymbol{footnote}}}
%%%%%%%%%%%%%%%%%%%%%%%%%%%%%%%%%%%%%%%%%%%%%%%%%
%INSERTING FOOTNOTE
\usepackage[symbol]{footmisc}

%%%%%%%%%%%%%%%%%%%%%%%%%%%%%%%%%%%%%%%%%%%%%%%%%
\title{Application of Zernike polynomials in solving certain first and second order partial differential equations}
\author{K. B. Datta$^{a}$ and S. Datta$^{b,*}$\\
$^a$ Department of Electrical Engineering, IIT, Kharagpur-721302, India\\
%email: attadbitnak@yahoo.com}\\
$^b$ Department of Mathematics \& Statistical Science, University of Idaho, ID 83844, USA
%email: somantikad@gmail.com
}
\date{}
\begin{document}
%\date{} %this command will not print the date in the title
\maketitle

\footnote{Corresponding author
E-mail address: sdatta@uidaho.edu
}

\vspace{-5mm}

\begin{abstract}
{\noindent
Integration operational matrix methods based on Zernike polynomials are used to determine approximate solutions of a class of non-homogeneous partial differential equations (PDEs) of first and second order. Due to the nature of the Zernike polynomials being described in the unit disk, this method is particularly effective in solving PDEs over a circular region. Further, the
proposed method can solve PDEs with discontinuous Dirichlet and Neumann boundary conditions,
and as these discontinuous functions cannot be defined at some of the Chebyshev or Gauss-Lobatto
points, the much acclaimed pseudo-spectral methods are not directly applicable to such problems.
Solving such PDEs is also a new application of Zernike polynomials as so far the main application of these polynomials seem to have been in the study of optical aberrations of circularly symmetric optical systems. In the present method, the given PDE is converted to a system of linear equations of the form $A\mathbf{x} = \mathbf{b}$ which may be solved by both $l_1$ and $l_2$ minimization methods among which the $l_1$ method is found to be more accurate. Finally, in the expansion of a function in terms of Zernike polynomials, the rate of decay of the coefficients is given for certain classes of functions.}
\end{abstract}

\noindent \textbf{Keywords:} {integration operational matrix, Laplace equation, partial differential equations, Zernike polynomials}
\\
%numerical computation of PDEs; computational mathematics;  integration operational matrix; Laplace equation; Zernike polynomials; approximation in numerical methods.

\noindent \textbf{2020 Math Subject Classification:} 35AXX (Primary), 41A10(Secondary)

%%%%%%%%%%%%%%%%%%%%%%%%%%%%%%%%%%%%%%%%%%%%%%%%%%%%%
\section{Introduction}
\label{sec:Intro}
%%%%%%%%%%%%%%%%%%%%%%%%%%%%%%%%%%%%%%%%%%%%%%%%%%%%%
\subsection{Background}
\label{Background}
%%%%%%%%%%%%%%%%%%%%%%%%%%%%%%%%%%%%%%%%%%%%%%%%%%%%%
If the analytical solution of a partial differential equation (PDE) with a forcing function and given boundary conditions is difficult, we go for numerical methods as discussed in \cite{Boyd:2011}, \cite{Matsushima:1995}, and \cite{Morton:2005}. In many practical cases such as fluid flow in a rotating cylinder, electromagnetic equations in cylindrical waveguides and optical lens design, and many other cases of a cylindrical or a spherical boundary with axial symmetry, one needs to solve PDEs over a disk instead of a rectangular boundary \cite{Boyd:2011, Boyd:2014, Matsushima:1995}.
\par
One of the important numerical methods to solve PDEs is using integration operational matrices (IOMs), first introduced in \cite{{Corrington:1973}} who showed that integral and differential equations could be reduced to a set of linear algebraic equations with an approximation in the sense of least-squares by taking an orthonormal set of Walsh functions.
This approach was subsequently applied to solve PDEs in \textit{rectangular} regions using piecewise constant orthogonal functions (PCOF) and orthogonal polynomials (OP), comprehensive accounts of which are available in \cite{Rao:1983} and \cite{Datta:1995}. A set of 2-D orthogonal functions known as Zernike polynomials and defined on the unit disk is used in the analysis and evaluation of optical systems with circular pupils by expanding optical wavefront functions in series of these polynomials \cite{Zernike:1934, Nijboer:1942,Born:1999}. It appears that these polynomials have never been used in the analysis of PDEs and in this paper they are fruitfully employed to solve PDEs on a disk.

With the above motivation, the main contribution of this paper is a new method of solving PDEs in circular regions with discontinuous Dirichlet and Neumann boundary conditions using Zernike polynomials and IOMs. In practice, to get an IOM, a pre-selected set of orthogonal functions (OFs) is first integrated analytically. The result of integration is then expressed in terms of a fixed finite number of functions in the original set of OFs. This gives an approximation of the integration operator.  On the other hand, for a set of OPs, a three-term derivative recurrence relation is available, which on integration allows one to express any orthogonal function in the set in terms of the original set of OPs. For the radial parts of Zernike  polynomials, neither a three-term recurrence relation is available, nor on integrating any radial polynomial in the set can it be expressed automatically in terms of other polynomials in the set. However, this difficulty can be obviated by using a derivative relation of the radial parts of Zernike  polynomials given in \cite{Noll:1976}, different from the three-term derivative recurrence relation of OPs, which needs a trivial matrix inversion to get the IOM of the radial parts of Zernike  polynomials.

\par
If a known function $u(r,\theta)$  is approximated by the Zernike polynomials $R^m_n(r) e^{i m \theta} $ of order $(m,n)$, $m$ being the azimuthal frequency and $n$ the degree of the radial polynomial $R^m_n(r)$, then the radial polynomials of degree higher than $n$ are neglected. In the method proposed here, to obtain the approximate solution of a PDE, these higher order polynomials are approximated by lower order polynomials with some reliable interpolation formula such as Lagrange's, see Remark~\ref{Rem:Approximation}, equations (\ref{SOPDE12}) and  (\ref{ApproxLagInterpolation}). If this projection of the neglected polynomials on the space generated by the lower order polynomials is not done then our IOM method of solving PDEs using  Zernike polynomials may fail. In \cite{Rao:1983}, the author used two dimensional block pulse functions to solve second order PDEs. However, the approximate solutions did not converge, and this may be attributed to the fact that the above mentioned idea of projecting higher order terms was not considered.

\par
Using IOMs, a given PDE is reduced to a system of linear equations of the form $A\mathbf{x} = \mathbf{b},$ where $A$ is a sparse matrix. This must then be solved to obtain an approximate solution of the PDE. This is usually solved with least squares approximation by using standard matrix pseudo-inverse or Moore-Penrose inverse and is called $l_2$ method. An alternative method to solve such a sparse system is an $l_1$-minimization algorithm developed in \cite{Romberg:2006} which is used in this paper and found to be more accurate than the least squares solution.

As a comparison with some of the existing methods, it may be noted that pseudo-spectral collocation methods seem unsuited for solving PDEs with discontinuous boundary conditions. In any of the pseudo-spectral collocation methods, the Chebyshev points:
$$
\tfrac 12 (\cos \tfrac{i\pi}{M}+1),\; i=0,1,\dots,M
$$
or the Gauss-Lobatto points in the interval [0,1] are chosen so as to minimize Runge phenomenon, and the boundary conditions (BCs) must be defined at these discrete points. In the problem that we have considered here in Example~\ref{EX:SOPDE} of Section~\ref{Sec:SecondOrderPDE}, this would imply that the mixed BCs are discontinuous for $i=M$. Therefore, the given boundary conditions will not be defined at one of the collocation points.
The method proposed here can naturally take care of such discontinuous BCs although Gibbs-Wilbraham phenomenon will still appear. This is expected when using Fourier series, and cosine and sine functions are part of the structure
of Zernike polynomials.

One of the primary concerns in numerically solving PDEs is the convergence of the solutions.
The ingenuous basis functions developed by Livermore et al. in \cite{Livermore:2007} to apply Galerkin's method on the disk and sphere behave asymptotically as Jacobi polynomials for large degree,
and so their convergence rates are similar to the latter polynomials. In the polynomial representation of a function $f(x)$ of one variable, where the domain is an interval, it is generally true that Chebyshev polynomials give the fastest rate of convergence from the larger family of Jacobi polynomials except when $f(x)$ is singular at one or both end points. In this setting, all Gegenbauer polynomials (including Legendre and Chebyshev) converge equally fast at the endpoints, but Gegenbauer polynomials converge more rapidly on the interior with increasing order of the degree $m$. However, for functions on the unit disk, Zernike polynomials are superior in terms of rate of convergence when compared to Chebyshev-Fourier series \cite{Boyd:2014}. So, in terms of the rate of convergence, the performance of the method proposed in this paper using the IOM of Zernike polynomials has the same superiority as discussed in \cite{Boyd:2014}. Moreover, it is also shown in this paper (see Theorem~\ref{Thm:Decay}) that for functions that are H\"{o}lder  continuous of order $\lambda,$ the coefficients $C_{nm}$ in the expansion of a function $u(r,\theta)$ in terms of Zernike polynomials $R_n^m(r) e^{im\theta}$ decay at least like $m^{-\lambda+1},\, \lambda \ge 1$. Using methods similar to the decay of Fourier coefficients for functions of a single variable, a similar result (Theorem~\ref{THM:RateDecayCk}) is given for functions that are $k$ times continuously differentiable.
\\
Some preliminary ideas underlying the Zernike polynomials
%Zernike circle polynomials
are given next.

\subsection{Preliminaries and Notation }
\label{sec:Notation}
%%%%%%%%%%%%%%%%%%%%%%%%%%%%%%%%%%%%%%%%%%
\emph{Zernike polynomials} are used to conveniently expand optical wavefront functions that arise in optical systems with circular pupils \cite{Zernike:1934, Born:1999, Lakshminarayanan:2011}. Proposed by F. von Zernike
%von. F. Zernike
in \cite{Zernike:1934}, these polynomials are orthogonal on the unit disk $B(0,1) = \{(x, y) \in \mathbb{R}^2 : x^2 + y^2 \leq 1\}$ and can be found in the following way \cite{Zernike:1934, Nijboer:1942, Born:1999}.
To start with, one considers a partial differential equation that is invariant under rotations of the %co-ordinate
coordinate axes about the origin. Such an equation has the form:
 	\begin{equation} \label{eq:PDEforcirclepolynomials}
 	\bigtriangleup U + \alpha \left(x \frac{\partial}{\partial x} + y \frac{\partial}{\partial y}\right)^2 U + \beta \left(x \frac{\partial}{\partial x} + y \frac{\partial}{\partial y}\right) U + \gamma U = 0.
 	\end{equation}	
In polar coordinates $r$ and $\phi$, by using the transformation: $x=r \cos \phi$ and $y=r\sin \phi$,  equation (\ref{eq:PDEforcirclepolynomials}) becomes
	\begin{equation} \label{eq:PDEforcirclepolynomialsPolar}
	(1 + \alpha r^2) \frac{\partial^2 U}{\partial r^2} + \left(\frac 1r + (\alpha + \beta)r \right) \frac{\partial U}{\partial r} + \frac{1}{r^2} \frac{\partial^2 U}{\partial \phi^2} + \gamma U = 0.
	\end{equation}
Choosing $\alpha = -1,$ $\beta = -1,$ and $\gamma = n(n+2)$ gives the hypergeometric equation
	\begin{equation} \label{eq:HypergeometricRadialFunction}
	x(1 - x) \frac{d^2 y}{dx^2} + (1 - 2x) \frac{dy}{dx} + \frac 14 \left[n(n+2) - \frac{m^2}{x}\right] y = 0.
 	\end{equation}
The solution of (\ref{eq:HypergeometricRadialFunction}) denoted by $R_n^m(r)$ is known as the \textit{radial part of a Zernike  polynomial} and is given by (\ref{Sol:Hypergeometric}) below when $n$, $m$ are non-negative integers, and $n - m$ is even and non-negative, see %\cite{Nijboer:1947}
\cite{Nijboer:1942}, \cite{Prata:1989} and \cite{Born:1999}:
	\begin{equation} \label{Sol:Hypergeometric}
	R_n^m(r) = \sum_{\ell = 0}^{\frac{n - m}{2}} (-1)^{\ell} \frac{(n - \ell)!}{\ell! (\frac{n - m}{2} - \ell)! (\frac{n + m}{2} - \ell)!} r^{n - 2\ell}.
	\end{equation}
The radial part of Zernike polynomials can be expressed in terms of classical Jacobi polynomials defined on the interval $[0, 1],$ as outlined in \cite{Weisstein} and \cite{Dunkl:2014}. The classical Jacobi polynomials satisfy a three term recurrence relation, a second order differential equation as (1.3), and interesting properties such as (2.9), see \cite{Abramowitz:1972} and \cite{Chihara:1978}. A method of computing the radial parts of Zernike polynomials
%Zernike circle polynomials
of arbitrary degree using the discrete Fourier transform has been discussed in \cite{Janssen:2007}. In \cite{Shakibeai:2013}, a recurrence relation that depends neither on the degree nor on the azimuthal order of the radial polynomials is developed.
The %Zernike circle polynomials
Zernike polynomials are solutions to (\ref{eq:PDEforcirclepolynomials}) or (\ref{eq:PDEforcirclepolynomialsPolar}), and are given by
	\begin{equation} \label{ZernikePolynomials}
    U^m_n(r, \phi) =  R_n^m(r)(C_1 \cos m \phi + C_2 \sin m \phi), \quad n \in \mathbb{N} \cup \{0\},  \ n - m \geq 0, \ n - m \ \textrm{even},
	\end{equation}
where $C_1$ and $C_2$  are arbitrary constants.

It is worth mentioning that \textit{Zernike polynomials} is the general name for a class of bivariate orthogonal polynomials on
the unit disk, and they are a particular case of orthogonal polynomials on the unit disk, see \cite{Dunkl:2014}. They are defined by  a radial part that is a univariate orthogonal Jacobi polynomial defined on the interval [0, 1], and a non-radial part that is a bivariate spherical harmonic. The %Zernike circle polynomials
Zernike polynomials form a complete orthogonal set for the interior of the unit disk $B(0,1)$, see \cite{Bhatia:1954, Nijboer:1942, Born:1999}. They can therefore be normalized to form an orthonormal basis for the space $L^2(B(0,1)),$ see Section~\ref{sec:Convergence}. Let $f(r, \phi)$ be an arbitrary function defined on $B(0,1)$. In terms of the Zernike polynomials
%Zernike circle polynomials
given in (\ref{ZernikePolynomials}), $f$ can be represented as \cite{Zernike:1934, Born:1999}
	\begin{equation} \label{RepresentationZernikePoly}
	f(r, \phi) = \sum_{n=0}^{\infty} \sum_{\substack{0 \leq m \leq n \\ n - m \ \textrm{even}}} (A_{nm} \cos m \phi + B_{nm} \sin m \phi) R_n^m(r)
	\end{equation}	
where
	\begin{equation} \label{EQ:ZernikeCoeff}
	A_{nm} = \frac{\epsilon_m(n+1)}{\pi} \int_0^1 \int_0^{2\pi} f(r, \phi) \cos m \phi R_n^m(r) r d \phi dr,
	B_{nm} = \frac{\epsilon_m(n+1)}{\pi} \int_0^1 \int_0^{2\pi} f(r, \phi) \sin m \phi R_n^m(r) r d \phi dr,
	\end{equation}
and $\epsilon_m$ is the Neumann factor:
	$$\epsilon_m = \left\{
	\begin{array}{cc}
	1 & \textrm{if} \ m = 0, \\
	2 & \textrm{otherwise.}
	\end{array}
	\right.
	$$
An efficient algorithm for calculating the coefficients $A_{nm}$ and $B_{nm}$ is discussed in \cite{Prata:1989}, see also  \cite{Hwang:2006}. An approximation of $f$ of order $(m,n)$ can then be calculated as
\begin{eqnarray*}
	f(r, \phi) = \sum_{i=0}^{n} \; \sum_{\substack{0 \leq j \leq i \\ i - j \ \textrm{even}}}^m (A_{ij} \cos j \phi + B_{ij} \sin j \phi) R_i^j(r).
\end{eqnarray*}
For details on the properties of Zernike polynomials
%Zernike circle polynomials
the reader is referred to  \cite{Nijboer:1942} and \cite{Born:1999}. In the context of opto-mechanical analysis
% of the eye, it has been studied that whereas
by finite element methods,
% cannot be used with success for such analysis,
a Zernike polynomial representation of the surface distortions is found to be better than by other orthogonal functions \cite{Genberg:2002}.

At times it will be convenient to write a product of matrices in a vector and tensor product form. In such cases, a matrix $P$ of size %$m$ by $n$
$m \times n$ is represented as a vector of size $mn,$ and written as $\textrm{vec}(P).$ The tensor product of matrices is denoted by  $\otimes.$ To clarify, let $A$, $B$, $X$, and $Y$ be $2 \times 2$ matrices and consider
	\begin{align}
	Y &= A X B \notag \\
	\textrm{or,} \quad	
	\begin{bmatrix}
	y_1 & y_2 \\
	y_3 & y_4
	\end{bmatrix}
	& =
	\begin{bmatrix}
	a_{11} & a_{12} \\
	a_{21} & a_{22}
	\end{bmatrix}
	\begin{bmatrix}
	x_1 & x_2 \\
	x_3 & x_4
	\end{bmatrix}
	\begin{bmatrix}
	b_{11} & b_{12} \\
	b_{21} & b_{22}
	\end{bmatrix}
	\nonumber \\
	\textrm{or,} \quad
	\begin{bmatrix}
	y_1 \\
	y_2 \\
	y_3 \\
	y_4
	\end{bmatrix}
	& =
	\begin{bmatrix}
	a_{11} b_{11} & a_{12} b_{11} & a_{11} b_{21} & a_{12} b_{21} \\
	a_{21} b_{11} & a_{22} b_{11} & a_{21} b_{21} & a_{22} b_{21} \\
	a_{11} b_{12} & a_{12} b_{12} & a_{11} b_{22} & a_{12} b_{22} \\
	a_{21} b_{12} & a_{22} b_{12} & a_{21} b_{22} & a_{22} b_{22}
	\end{bmatrix}
	\begin{bmatrix}
	x_1 \\
	x_2 \\
	x_3 \\
	x_4
	\end{bmatrix}
	\nonumber \\
	\implies \textrm{vec}(Y) &= (B^{\textrm{\tiny{T}}} \otimes A) \textrm{vec}(X) \label{EQ:VectorMatrixTensorProduct}
	\end{align}
%
%	The above is a well known transformation of linear systems of matrix unknowns by means of the Kronecker product studied in \cite{Horn:1991}.
The above method of transforming a linear system of matrix unknowns to a linear system involving a vector of unknowns by means of tensor product (also known as Kronecker product) is well known (Chapter 4; \cite{Horn:1991}).
%%%%%%%%%%%%%%%%%%%%%%%%%%%%%%%%%%%%%%%%%%%%%%%%%%%%%%%%%%%%%%
\subsection{Outline}
\label{sec:Outline}
%%%%%%%%%%%%%%%%%%%%%%%%%%%%%%%%%%%%%%%%%%%%%%%%%%%%%%%%%%%%%%
The remaining part of the paper is organized as follows. Section~\ref{Sec:FirstOrderPDE} discusses the solution of a first order PDE by using the IOM method with Zernike polynomials,
%Zernike circle polynomials
and the %efficacy
accuracy of the method is shown by means of a specific example. Section~\ref{Sec:SecondOrderPDE} discusses the solution of a second order PDE by using the IOM method with Zernike polynomials.
%Zernike circle polynomials.
The results of the proposed method are applied to a particular Laplace equation. Surface plots of the solutions and error estimates are provided for various orders of approximation. Section~\ref{sec:Convergence} provides some results related to the decay of the Zernike coefficients of functions that are $k$ times continuously differentiable and ones that are H\"{o}lder continuous. Some derivations in obtaining the IOMs are given in Section~\ref{Appendix}, and Section~\ref{Conclusion} has some concluding remarks including future directions.
%%%%%%%%%%%%%%%%%%%%%%%%%%%%%%%%%%%%%%%%%%%%%%%%%%%%%%%%%%%%%%%
\section{Solving first order partial differential equations using integration operational matrix}
\label{Sec:FirstOrderPDE}
A linear first order partial differential equation (FOPDE) in $u(x,y)$ with forcing function $f(x,y)$ has the general form
    \begin{equation} \label{EQ:Gen_FOPDE}
    \alpha(x,y) \frac{\partial u}{\partial x} + \beta(x,y) \frac{\partial u}{\partial y} + \gamma(x,y) u = f(x,y).
    \end{equation}
In general, getting the analytical solution of (\ref{EQ:Gen_FOPDE}), subject to some boundary conditions, is often not feasible or too cumbersome. Consequently, one seeks numerical methods to solve such problems. This section describes such a technique using the integrational operational matrix (IOM) of %Zernike circle polynomials
Zernike polynomials. The technique shown below can be adapted for some given $\alpha,$ $\beta,$ $\gamma$.
For the sake of demonstration, we consider the following form of a FOPDE:
    \begin{equation}\label{FirstOrderPDE}
    \alpha x \frac{\partial u}{\partial x} + \beta y \frac{\partial u}{\partial y} + \gamma u = f,
    \end{equation}
where $\alpha,$ $\beta$, and $\gamma$ are constants.
Changing (\ref{FirstOrderPDE}) to polar coordinates $(r, \phi)$ gives
    \begin{equation}\label{FirstOrderPDEPolar}
    r(\alpha \cos^2 \phi + \beta \sin^2 \phi)\frac{\partial u}{\partial r} - (\alpha - \beta)\sin \phi \cos \phi \frac{\partial u}{\partial \phi} + \gamma u = f.
    \end{equation}
%where $\alpha,$ $\beta,$ and $\gamma$ are constants.
%
Equation (\ref{FirstOrderPDEPolar}) has to be solved subject to the boundary conditions
	 \begin{align*}
		u(r_0, \phi) &= h(\phi), \\
    		u(r, \phi_0) &= g(r) .
     \end{align*}
Integrating (\ref{FirstOrderPDEPolar}) first with respect to $r$ from $r_0$ to $r$ and then with respect to $\phi$ from $\phi_0$ to $\phi$ using integration by parts and the given boundary conditions gives
    \begin{align}
    & \alpha r \int_{\phi_0}^{\phi} u \cos^2 \phi \ d \phi + \beta r \int_{\phi_0}^{\phi} u \sin^2 \phi \ d\phi - r_0\int_{\phi_0}^{\phi} h(\phi)(\alpha \cos^2 \phi + \beta \sin^2 \phi) \ d \phi \ - \  \nonumber\\
    - & \int_{\phi_0}^{\phi}(\alpha \cos^2 \phi + \beta \sin^2 \phi)\left[\int_{r_0}^r u \ dr\right]\ d\phi - \frac{(\alpha - \beta)}{2}\left[ \int_{r_0}^{r} \sin 2\phi \ u(r, \phi) - \sin 2 \phi_0 \int_{r_0}^r g(r) \ dr \right .\nonumber \\
& \left .- 2 \int_{r_0}^{r} \int_{\phi_0}^{\phi} u \cos 2\phi \ d \phi \ dr \right]
+ \ \gamma \int_{\phi_0}^{\phi} \int_{r_0}^r u \ dr d\phi = \int_{\phi_0}^{\phi} \int_{r_0}^r f \ dr d\phi. \label{MainIntegralEquation}
    \end{align}
To solve for $u,$ matrix representations for the integral operators, the forcing function $f,$ and the unknown $u,$ in terms of trigonometric and %Zernike radial polynomials
radial parts of Zernike polynomials, are needed. The idea is to write every term in (\ref{MainIntegralEquation}) in terms of %a integration
an integration operational matrix and reduce (\ref{MainIntegralEquation}) to an algebraic equation.

Let the trigonometric functions be written as a vector
$$\Phi(\phi) = [1, \ \cos \phi, \ \sin \phi, \ \cos 2\phi, \ \sin 2\phi, \ \cdots ]^{\textrm{\scriptsize{T}}}.$$
For practical purposes, only a finite number of terms of $\Phi$ can be used. If only terms up to azimuthal frequency $m \phi$ are used, then, by an abuse of notation, we shall also denote by $\Phi$ the resulting vector of size $M = 2m + 1$, i.e.,
	$$
	\Phi(\phi) = [1, \ \cos \phi, \ \sin \phi, \ \cos 2\phi, \ \sin 2\phi, \ \cdots, \ \cos m\phi, \ \sin m \phi]^{\textrm{\scriptsize{T}}} ,
	$$
    and
    \begin{align*}
	\int_{\phi_0}^{\phi} \Phi(\phi) \ d\phi &=
    [\phi - \phi_{0},  \sin \phi - \sin \phi_0, - \cos \phi + \cos \phi_0, \frac{\sin 2\phi}{2} - \frac{\sin 2\phi_0}{2}, \\
    & \phantom{+++++} \cdots, \frac{\sin m\phi}{m} - \frac{\sin m \phi_0}{m}, - \frac{\cos m \phi}{m} + \frac{\cos m \phi_0}{m}]^{\textrm{\scriptsize{T}}} .
    \end{align*}
In order to express the above integral in matrix form, the $\phi$ appearing on the right side has to be expressed in terms of $\{1, \ \cos \phi, \ \sin \phi, \ \cos 2\phi, \ \sin 2\phi, \ldots \}$. To achieve this, we take the Fourier series expansion of $\phi$ over $[0, 2\pi]$ which is
    $$\phi = \pi - \sum_{k=1}^{\infty} \frac 2k \sin k\phi.$$
%which implies
This yields
    $$
    \int_{\phi_0}^{\phi} \Phi(\phi) \ d\phi
    =
    \left[
    \arraycolsep=2.4pt\def\arraystretch{.75}
    \begin{array}{cccccccccc}
    \pi - \phi_0 & 0 & -2 & 0 & -1 & 0 & -\frac23 & \cdots & 0 & -\frac2m \\
    - \sin \phi_0 & 0 & 1 & 0 & 0 & 0 & 0 & \cdots & 0 & 0 \\
    \cos \phi_0 & -1 & 0 & 0 & 0 & 0 & 0 & \cdots & 0 & 0 \\
    -\frac{\sin 2\phi_0}{2} & 0 & 0 & 0 & \frac12 & 0 & 0 & \cdots & 0 & 0 \\
    \frac{\cos 2 \phi_0}{2} & 0 & 0 & -\frac12 & 0 & 0 & 0 & \cdots & 0 & 0 \\
    \vdots & \vdots & \vdots & \vdots & \vdots & \vdots & \vdots & \cdots & \vdots & \vdots \\
    -\frac{\sin m \phi_0}{m} & 0 & 0 & 0 & 0 & 0 & 0 & \cdots & 0 & \frac 1m \\
    \frac{\cos m \phi_0 }{m} & 0 & 0 & 0 & 0 & 0 & 0 & \cdots & -\frac1m & 0
    \end{array}
    \right]
    \left[
    \arraycolsep=2.4pt\def\arraystretch{.75}
    \begin{array}{c}
    1 \\
    \cos \phi \\
    \sin \phi \\
    \cos 2\phi \\
    \sin 2\phi \\
    \vdots \\
    \cos m\phi \\
    \sin m \phi
    \end{array}
    \right]
    = E_{\phi \phi_0} \Phi.
    $$
The %Zernike radial polynomials
radial parts of Zernike  polynomials
$R^m_n(r)$ can be written sequentially as an infinite vector as
	$$
	R(r) = [R_0^0(r), R_1^1(r), R_2^0(r), R_2^2(r), R_3^1 (r), R_3^3(r), \ldots,  R_n^m(r), \ldots ]^{\textrm{\tiny{T}}}, \ n \in \mathbb{N} \cup \{0\}, \ 0 \leq n-m, \ n - m \ \textrm{even} .
	$$
For a fixed $n,$ the number of radial polynomials of degree less than or equal to $n$ is denoted by $N$.
Thus, for approximation with radial polynomials with degree up to $n$, only $N$ elements of the above vector $R(r)$ are used. As in the case of $\Phi$, by an abuse of notation, this is also denoted by $R(r)$ and is given by
	$$
	R(r) = [R_0^0(r), R_1^1(r), R_2^0(r), R_2^2(r), R_3^1 (r), R_3^3(r), \ldots,  R_n^m(r)]^{\textrm{\tiny{T}}}, \ n \in \mathbb{N} \cup \{0\}, \ 0 \leq n-m, \ n - m \ \textrm{even} .
	$$
The solution $u$ represented in terms of the %Zernike circle polynomials
Zernike polynomials then results in an approximation
    \begin{equation}\label{EQ:ApproxSol}
    \widetilde{u} = \Phi^{\textrm{\tiny{T}}}U R,
    \end{equation}
where $U$ contains the coefficients of $u$ with respect to the polynomials up to a chosen degree. To help in understanding, first consider the first term on the left side of (\ref{MainIntegralEquation}). This can be written as
	\begin{equation} \label{EXP:TermOne}
	\alpha r \int_{\phi_0}^{\phi} \cos^2 \phi \Phi^{\textrm{\tiny{T}}}U R \ d \phi.
	\end{equation}
Take $\phi_0 = 0.$
%Let
%    $$
%    x_1 = \frac{1}{2m} + \frac{1}{4(m-2)} + \frac{1}{4(m+2)}.
%    $$
Then
 \footnotesize{
 \begin{align*}
     &\int_0^{\phi} \cos^2 \phi \ \Phi(\phi) \ d \phi  = \\
     &
     \left[
      \arraycolsep=3pt\def\arraystretch{1}
     \begin{array}{cccccccccccccccccc}
     \frac{\pi}{2} & 0 & -1 & 0 & -\frac{1}{4} & 0 & -\frac{1}{3} & 0 & -\frac{1}{4} & 0 & -\frac{1}{5} & \cdots & 0 & \frac{-1}{m-2} & 0 & \frac{-1}{m-1} & 0 & \frac{-1}{m} \\
     0 & 0 & \frac{3}{4} & 0 & 0 & 0 & \frac{1}{12} & 0 & 0 & 0 & 0 & \cdots & 0 & 0 & 0 & 0 & 0 & 0 \\
     \frac{1}{3} & -\frac{1}{4} & 0 & 0 & 0 & -\frac{1}{12} & 0 & 0 & 0 & 0 & 0 & \cdots & 0 & 0 & 0 & 0 & 0 & 0 \\
    \pi/4 & 0 & -\frac12 & 0 & 0 & 0 & -\frac{1}{6} & 0 & -\frac{1}{16} & 0 & -\frac{1}{10} & \cdots & 0 & \frac{-1}{2(m-2)} & 0 & -\frac{1}{2(m-1)} & 0 & \frac{-1}{2m}\\
     \frac{5}{16} & 0 & 0 & -\frac{1}{4} & 0 & 0 & 0 & -\frac{1}{16} & 0 & 0 & 0 & \cdots & 0 & 0 & 0 & 0 & 0 & 0\\
    0 & 0 & \frac{1}{4} & 0 & 0 & 0 & \frac{1}{6} & 0 & 0 & 0 & \frac{1}{20} & \cdots & 0 & 0 & 0 & 0 & 0 & 0\\	
    \frac{7}{15} & -\frac{1}{4} & 0 & 0 & 0 & -\frac{1}{6} & 0 & 0 & 0 & -\frac{1}{20} & 0 & \cdots & 0 & 0 & 0 & 0 & 0 & 0 \\
    \vdots & \vdots & \vdots & \vdots & \vdots & \vdots & \vdots & \vdots & \vdots & \vdots & \vdots & \cdots & \vdots & \vdots & \vdots & \vdots & \vdots & \vdots \\
    0 & 0 & 0 & 0 & 0 & 0 & 0 & 0 & 0 & 0 & 0 & \cdots & 0 & \frac{1}{4(m-2)} & 0 & 0 & 0 & \frac{1}{2m}\\
    x_1 & 0 & 0 & 0 & 0 & 0 & 0 & 0 & 0 & 0 & 0 & \cdots & \frac{-1}{4(m-2)} & 0 & 0 & 0 & \frac{-1}{2m} & 0
     \end{array}
     \right]
     \begin{bmatrix}
    1 \\
    \cos \phi \\
    \sin \phi \\
    \cos 2\phi \\
    \sin 2\phi \\
    \cos 3\phi \\
    \sin 3 \phi \\
    \cos 4\phi \\
    \sin 4 \phi \\
    \cos 5\phi \\
    \sin 5 \phi \\
    \vdots \\
    \cos m \phi \\
    \sin m \phi
    \end{bmatrix}
    \\
    &=
    E_{\phi}^{\cos^2\phi} \Phi(\phi),
    \end{align*}
}
\normalsize
where $x_1 = \frac{1}{2m} + \frac{1}{4(m-2)} + \frac{1}{4(m+2)}$ in the last row of the matrix,
%Let
%$$
%x_2 = \frac{1}{2m} - \frac{1}{4(m-2)} - \frac{1}{4(m+2)}.
%$$
%Then
\footnotesize{
    \begin{align*}
     &\int_0^{\phi} \sin^2 \phi \ \Phi(\phi) \ d \phi  = \\
     &
     \left[
     \arraycolsep=3pt\def\arraystretch{1}
     \begin{array}{cccccccccccccccccc}
     \frac{\pi}{2} & 0 & -1 & 0 & -\frac{3}{4} & 0 & -\frac{1}{3} & 0 & -\frac{1}{4} & 0 & -\frac{1}{5} & \cdots & 0 & \frac{-1}{m-2} & 0 & \frac{-1}{m-1} & 0 & \frac{-1}{m} \\
     0 & 0 & \frac{1}{4} & 0 & 0 & 0 & -\frac{1}{12} & 0 & 0 & 0 & 0 & \cdots & 0 & 0 & 0 & 0 & 0 & 0 \\
     \frac{2}{3} & -\frac{3}{4} & 0 & 0 & 0 & \frac{1}{12} & 0 & 0 & 0 & 0 & 0 & \cdots & 0 & 0 & 0 & 0 & 0 & 0 \\
    -\frac{\pi}{4} & 0 & \frac{1}{2} & 0 & \frac{1}{2} & 0 & \frac{1}{6} & 0 & \frac{1}{8} & 0 & \frac{1}{10} & \cdots & 0 &  \frac{1}{2(m-2)}  & 0 & \frac{1}{2(m-1)} & 0 & \frac{1}{2m}\\
     \frac{3}{16} & 0 & 0 & -\frac{1}{4} & 0 & 0 & 0 & \frac{1}{16} & 0 & 0 & 0 & \cdots & 0 & 0 & 0 & 0 & 0 & 0\\
    0 & 0 & -\frac{1}{4} & 0 & 0 & 0 & \frac{1}{6} & 0 & 0 & 0 & -\frac{1}{20} & \cdots & 0 & 0 & 0 & 0 & 0 & 0\\	
    \frac{-2}{15} & \frac{1}{4} & 0 & 0 & 0 & -\frac{1}{6} & 0 & 0 & 0 & \frac{1}{20} & 0 & \cdots & 0 & 0 & 0 & 0 & 0 & 0 \\
    \vdots & \vdots & \vdots & \vdots & \vdots & \vdots & \vdots & \vdots & \vdots & \vdots & \vdots & \cdots & \vdots & \vdots & \vdots & \vdots & \vdots & \vdots \\
    0 & 0 & 0 & 0 & 0 & 0 & 0 & 0 & 0 & 0 & 0 & \cdots & 0 & \frac{-1}{4(m-2)} & 0 & 0 & 0 & \frac{1}{2m}\\
    x_2 & 0 & 0 & 0 & 0 & 0 & 0 & 0 & 0 & 0 & 0 & \cdots & \frac{1}{4(m-2)} & 0 & 0 & 0 & \frac{-1}{2m} & 0
     \end{array}
     \right]
     \begin{bmatrix}
    1 \\
    \cos \phi \\
    \sin \phi \\
    \cos 2\phi \\
    \sin 2\phi \\
    \cos 3\phi \\
    \sin 3 \phi \\
    \cos 4\phi \\
    \sin 4 \phi \\
    \cos 5\phi \\
    \sin 5 \phi \\
    \vdots \\
    \cos m \phi \\
    \sin m \phi
    \end{bmatrix}
    \\
    &=
    E_{\phi}^{\sin^2\phi} \Phi(\phi),
    \end{align*}
}
\normalsize
where $x_2 = \frac{1}{2m} - \frac{1}{4(m-2)} - \frac{1}{4(m+2)}$ in the last row of the matrix, \\ and
% $$
% x_3 = \frac{1}{2(m+2)} + \frac{1}{2(m-2)}.
% $$
%Then
\footnotesize{
 \begin{align*}
     &\int_0^{\phi} \cos 2 \phi \ \Phi(\phi) \ d \phi = \\
     &
     \left[
     \arraycolsep=3pt\def\arraystretch{1}
     \begin{array}{cccccccccccccccccc}
     0 & 0 & 0 & 0 & \frac12 & 0 & 0 & 0 & 0 & 0 & 0 & \cdots & 0 & 0 & 0 & 0 & 0 & 0 \\
     0 & 0 & \frac12 & 0 & 0 & 0 & \frac16 & 0 & 0 & 0 & 0 & \cdots & 0 & 0 & 0 & 0 & 0 & 0 \\
     -\frac13 & \frac12 & 0 & 0 & 0 & -\frac16 & 0 & 0 & 0 & 0 & 0 & \cdots & 0 & 0 & 0 & 0 & 0 & 0 \\
     \frac{\pi}{2} & 0 & -1 & 0 & -\frac12 & 0 & -\frac13 & 0 & -\frac{1}{8} & 0 & 0 & \cdots & 0 & -\frac{1}{m-2} & 0 & -\frac{1}{m-1} & 0 & -\frac 1m\\
     \frac{1}{8} & 0 & 0 & 0 & 0 & 0 & 0 & -\frac18 & 0 & 0 & 0 & \cdots & 0 & 0 & 0 & 0 & 0 & 0\\
    \vdots & \vdots & \vdots & \vdots & \vdots & \vdots & \vdots & \vdots & \vdots & \vdots & \vdots & \cdots & \vdots & \vdots & \vdots & \vdots & \vdots & \vdots \\
    0 & 0 & 0 & 0 & 0 & 0 & 0 & 0 & 0 & 0 & 0 & \cdots & 0  & \frac{1}{2(m-2)} & 0 & 0 & 0 & 0\\
    x_3 & 0 & 0 & 0 & 0 & 0 & 0 & 0 & 0 & 0 & 0 & \cdots & - \frac{1}{2(m-2)} & 0 & 0 & 0 & 0 & 0
     \end{array}
     \right]
     \begin{bmatrix}
    1 \\
    \cos \phi \\
    \sin \phi \\
    \cos 2\phi \\
    \sin 2\phi \\
    \cos 3\phi \\
    \sin 3 \phi \\
    \cos 4\phi \\
    \sin 4 \phi \\
    \cos 5\phi \\
    \sin 5 \phi \\
    \vdots \\
    \cos m \phi \\
    \sin m \phi
    \end{bmatrix}
    =
    E_{\phi}^{\cos 2\phi} \Phi(\phi).
    \end{align*}
 }
\normalsize
where $x_3 = \frac{1}{2(m+2)} + \frac{1}{2(m-2)}$ in the last row of the matrix.
\\
\\
When one considers integrating from some angle $\phi_0$ to $\phi$ instead of from zero to $\phi$ then the following adjustments have to be made.
The first column of $E_{\phi}^{\cos^2\phi}$ changes to
	$$
	\begin{bmatrix}
	\frac{\pi}{2}-\frac{\phi_0}{2} - \frac{\sin 2\phi_0}{4} \\
	-\frac34\sin\phi_0-\frac{\sin 3\phi_0}{12} \\
	\frac{\cos \phi_0}{4} + \frac{\cos 3\phi_0}{12} \\
	\vdots\\
	-\frac{\sin(m-2)\phi_0}{4(m-2)} - \frac{\sin m\phi_0}{2m} - \frac{\sin(m+2)\phi_0}{4(m+2)} \\
	\frac{\cos(m-2)\phi_0}{4(m-2)} + \frac{\cos m\phi_0}{2m} + \frac{\cos(m+2)\phi_0}{4(m+2)}
	\end{bmatrix},
	$$
and the resulting matrix will be denoted by $E_{\phi \phi_0}^{\cos^2 \phi}$. \footnote{In general, an integration operational matrix will be denoted by $E,$ the subscript indicating the limits of integration while the superscript indicating the integrand.}
The first column of $E_{\phi}^{\sin^2\phi}$ changes to
	$$
	\begin{bmatrix}
	\frac{\pi}{2}-\frac{\phi_0}{2} + \frac{\sin 2\phi_0}{4} \\
	-\frac14\sin\phi_0 + \frac{\sin 3\phi_0}{12} \\
	\frac34 \cos \phi_0 - \frac{\cos 3\phi_0}{12} \\
	\vdots \\
	\frac{\sin(m-2)\phi_0}{4(m-2)} - \frac{\sin m\phi_0}{2m} + \frac{\sin(m+2)\phi_0}{4(m+2)} \\
	-\frac{\cos(m-2)\phi_0}{4(m-2)} + \frac{\cos m \phi_0}{2m} - \frac{\cos (m+2)\phi_0}{4(m+2)}
	\end{bmatrix},
	$$
and the resulting  matrix will be denoted by $E_{\phi \phi_0}^{\sin^2 \phi}.$
The first column of $E_{\phi}^{\cos 2\phi}$ becomes
$$
\left[
\arraycolsep=2.4pt\def\arraystretch{.75}
	\begin{array}{c}
	- \frac{\sin 2 \phi_0}{2} \\
	- \frac{\sin \phi_0}{2} - \frac{\sin 3 \phi_0}{6} \\
	-\frac{\cos \phi_0}{2} + \frac{\cos 3 \phi_0}{6} \\
	 - \frac{\phi_0}{2} - \frac{\sin 4 \phi_0}{8} \\
	\frac{\pi}{2} - \frac{\phi_0}{2} - \frac{\sin 4 \phi_0}{8} \\
	\frac{\cos 4 \phi_0}{8}  \\
	\vdots \\
	-\frac{\sin (m-2)\phi_0}{2(m-2)} - \frac{\sin (m+2)\phi_0}{2(m+2)} \\
	\frac{\cos(m-2)\phi_0}{2(m-2)} + \frac{\cos (m+2)\phi_0}{2(m+2)}
	\end{array}
	\right],
	$$
and the resulting matrix will be denoted by $E_{\phi \phi_0}^{\cos 2 \phi}.$
For demonstration purposes, to keep things less cumbersome, the %Zernike radial  polynomials
radial parts of Zernike polynomials up to degree three will be used below, and will be denoted by the vector $R(r).$
    \begin{equation}\label{RadialBasis_3}
    R(r) = [R_0^0(r), \ R_1^1(r), \ R_2^0(r), \ R_2^2(r), \ R_3^1(r), \ R_3^3(r)]^{\textrm{\tiny{T}}}
    = [1, \ r, \ 2r^2 - 1, \  r^2, \ 3r^3 - 2r, \ r^3]^{\textrm{\tiny{T}}}.
    \end{equation}
 In (\ref{EXP:TermOne}), $rR(r)$ can be approximated as
	\begin{equation}\label{IOP:M_Rr}
	r R(r)=
	\left[
	\arraycolsep=2.4pt\def\arraystretch{.75}
	\begin{array}{c}
	r \\
	r^2 \\
	2r^3 - r \\
	r^3 \\
	3r^4 - 2r^2 \\
	r^4
	\end{array}
	\right]
	\approx
	\left[
	\arraycolsep=2.4pt\def\arraystretch{.75}
	\begin{array}{cccccc}
	0 & 1 & 0 & 0 & 0 & 0 \\
	0 & 0 & 0 & 1 & 0 & 0 \\
	0 & -1 & 0 & 0& 0 & 2 \\
	0 & 0 & 0 & 0 & 0 & 1\\
	0 & 0 & 0 & -2 & 0 & 0 \\
	0 & 0& 0 & 0 & 0 & 0
	\end{array}
	\right]
	\left[
	\arraycolsep=2.4pt\def\arraystretch{.75}
	\begin{array}{c}
    	1 \\
    	r \\
    	2r^2 - 1 \\
    	r^2 \\
    	3r^3 - 2r \\
    	r^3
    	\end{array}
	\right]
	= M_R^r R(r).
	\end{equation}
In (\ref{IOP:M_Rr}), the higher order terms involving $r^4$ are ignored %making the last row of $M_R^r$ zero.
making the last row of $M_R^r$ equal to zero.
See  Remark~\ref{Rem:Approximation} for how to get a better approximation by projecting terms involving $r^4$ on the space generated by $R(r)$. The first two terms of (\ref{MainIntegralEquation}) can thus be written as \footnote{Strictly speaking, the resulting matrix representation is an approximation of some integral. However, we will always use the equality sign to express that the operators on the right side of the equality stand for corresponding integrals on the left side.}
    $$
    \alpha r \int_{\phi_0}^{\phi} u \cos^2 \phi \ d \phi + \beta r \int_{\phi_0}^{\phi} u \sin^2 \phi \ d\phi =
    \alpha \Phi^{\textrm{\tiny{T}}} \left( E_{\phi \phi_0}^{\cos^2 \phi}\right)^{\textrm{\tiny{T}}} U M_R^r R
    +
     \beta \Phi^{\textrm{\tiny{T}}} \left( E_{\phi \phi_0}^{\sin^2 \phi}\right)^{\textrm{\tiny{T}}} U M_R^r R .
    $$
The function $h(\phi)$ is expanded in terms of trigonometric functions as
    $$h(\phi) = \mathbf{h}^{\textrm{\tiny{T}}} \Phi(\phi) = \Phi(\phi)^{\textrm{\tiny{T}}} \mathbf{h}, $$
 where $\mathbf{h}$ contains the coefficients of $h$ in terms of the functions in $\Phi.$
Also, $r_0$ can be expressed as
    $$r_0 = M_{r_0}^{\textrm{\tiny{T}}} R(r),$$
where $M_{r_0}$ is a vector whose first entry is $r_0$ and the rest are zero. These expressions for $h(\phi)$ and $r_0$ transform the third term in (\ref{MainIntegralEquation}) to
        $$r_0 \int_{\phi_0}^{\phi} h(\phi) (\alpha \cos^2 \phi + \beta \sin^2 \phi) \ d \phi = \alpha  \Phi^{\textrm{\tiny{T}}}\left( E_{\phi \phi_0}^{\cos^2 \phi}\right)^{\textrm{\tiny{T}}} \mathbf{h} M_{r_0}^{\textrm{\tiny{T}}} R(r) + \beta \Phi^{\textrm{\tiny{T}}}\left( E_{\phi \phi_0}^{\sin^2 \phi}\right)^{\textrm{\tiny{T}}} \mathbf{h} M_{r_0}^{\textrm{\tiny{T}}} R(r).
        $$
Using the recurrence relation given in \cite{Noll:1976} and \cite{Prata:1989}
     \begin{equation}\label{eq:ZernikePolyRecurrence}
     \int_{r_0}^r \left[R_n^m(r) + R_{n}^{m+2}(r) \right] \ dr = \frac{1}{n+1} \left[R_{n+1}^{m+1}(r) - R_{n-1}^{m+1}(r)\right]\Big|_{r_0}^r,
     \end{equation}
     gives
    \begin{equation} \label{EQ:IOPRadialPoly}
    \int_{r_0}^r R(\rho) \ d\rho
    \approx
    \left[
    \arraycolsep=2.4pt\def\arraystretch{.75}
    \begin{array}{lccccc}
    -R_1^1(r_0) & 1 & 0 & 0 & 0 & 0 \\
    -\frac12 R_2^2(r_0) & 0 & 0 & \frac 12 & 0 & 0 \\
    -\frac13\left[R_3^1(r_0) - R_1^1(r_0) - R_3^3(r_0) \right] & -\frac13 & 0 & 0 & \frac13 & -\frac13 \\
    -\frac13 R_3^3(r_0) & 0 & 0 & 0 & 0 & \frac13 \\
    -\frac14\left[R_4^2(r_0) - R_2^2(r_0) - R_4^4(r_0)\right] & 0 & 0& -1 & 0 & 0 \\
    -\frac14 R_4^4(r_0) & 0 & 0 & 0 & 0 & 0
    \end{array}
    \right] R(r) = E_{r r_0} R(r),
    \end{equation}
where recall that $R_0^0(r) = 1,$ $R_1^1(r) = r,$ $R_2^0 (r) = 2r^2 -1,$ $R_2^2(r) = r^2,$ $R_3^1(r) = 3r^3 - 2r,$ and $R_3^3(r) = r^3.$ Once again, as in (\ref{IOP:M_Rr}), the higher order terms involving $r^4$ are ignored resulting in zeros in the last row of $E_{r r_0}$ in (\ref{EQ:IOPRadialPoly}), see Remark~\ref{Rem:Approximation}.
Even though the derivation here uses only %Zernike radial polynomials
radial parts of Zernike  polynomials up to degree three, the general expression of the integration operational matrix $E_{r r_0},$ using all radial polynomials up to some given degree $n,$ is given in Section \ref{Appendix}.
Using (\ref{EQ:IOPRadialPoly}), the fourth term of (\ref{MainIntegralEquation}) is
     \begin{align*}
     \alpha \int_{\phi_0}^{\phi} \cos^2 \phi \left[\int_{r_0}^r u \ dr \right] \ d \phi
     &= \alpha \int_{\phi_0}^{\phi} \cos^2 \phi \int_{r_0}^r \Phi^{\textrm{\tiny{T}}} U R \ dr \ d \phi
     = \alpha \int_{\phi_0}^{\phi} \cos^2 \phi \ \Phi^{\textrm{\tiny{T}}} \ d \phi \ U \int_{r_0}^r R \ dr \\
     &= \alpha \left(E_{\phi \phi_0}^{\cos^2 \phi} \Phi \right)^{\textrm{\tiny{T}}} U E_{r r_0} R ,
     \end{align*}
and similarly
    $$
\beta \int_{\phi_0}^{\phi} \sin^2 \phi \left[\int_{r_0}^r u \ dr \right] \ d\phi = \beta \left(E_{\phi \phi_0}^{\sin^2 \phi} \Phi \right)^{\textrm{\tiny{T}}} U E_{r r_0} R.
	$$
If $\sin 2\phi \Phi$ is written as
    $$\sin 2\phi \Phi = M_{\Phi}^{\sin 2\phi} \Phi$$
then
    $$\frac{\alpha - \beta}{2} \int_{r_0}^r \sin 2\phi  \ u(r, \phi) \ dr = \int_{r_0}^r \sin 2\phi \Phi^{\textrm{\tiny{T}}} U R \ dr = \Phi^{\textrm{\tiny{T}}} \left(M_{\Phi}^{\sin 2\phi}\right)^{\textrm{\tiny{T}}} U E_r R.$$
Expressing $g(r)$ as
    $$g(r) = \mathbf{g}^{\textrm{\tiny{T}}} R(r)$$
gives
    $$
    \int_{r_0}^r g(r) \ dr = \int_{r_0}^r \mathbf{g}^{\textrm{\tiny{T}}} R(r) \ dr = \mathbf{g}^{\textrm{\tiny{T}}} E_{r r_0} R(r).
    $$	
Also, $\sin 2\phi_0$ can be expressed as
    $$
    \sin 2\phi_0 = M_{\sin 2\phi_0}^{\textrm{\tiny{T}}} \Phi(\phi) = \Phi(\phi)^{\textrm{\tiny{T}}} M_{\sin 2\phi_0},
    $$
where $M_{\sin 2\phi_0}$ is a vector whose first entry is $\sin 2\phi_0$ and the rest are zero. Therefore,
    $$
    \frac{\alpha - \beta}{2} \sin 2 \phi_0\int_{r_0}^r g(r) \ dr = \frac{\alpha - \beta}{2} \Phi^{\textrm{\tiny{T}}} M_{\sin 2\phi_0} \mathbf{g}^{\textrm{\tiny{T}}} E_{r r_0} R(r).
    $$
In matrix form,
    $$
    \int_{\phi_0}^{\phi} \cos 2 \phi \Phi = E_{\phi \phi_0}^{\cos 2\phi} \Phi ,
    $$
and thus
    $$
    \frac{\alpha - \beta}{2} \cdot 2 \int_{r_0}^r \int_{\phi_0}^{\phi} u \cos 2\phi \ d\phi \ dr
    =
    (\alpha - \beta)  \ \Phi^{\textrm{\tiny{T}}} \left(E_{\phi \phi_0}^{\cos 2\phi}\right)^{\textrm{\tiny{T}}} U E_{r r_0} R.
    $$
The last term on the left side of (\ref{MainIntegralEquation}) becomes, in matrix form,
	\begin{align*}
	\gamma \int_{\phi_0}^{\phi} \int_{r_0}^r u \ dr \ d \phi &= \gamma \int_{\phi_0}^{\phi} \int_{r_0}^r  \Phi^{\textrm{\tiny{T}}} U R \ dr \ d \phi
	=  \gamma \left(\int_{\phi_0}^{\phi} \Phi^{\textrm{\tiny{T}}} \ d \phi \right) U \left( \int_{r_0}^r R \ dr \right) \\
	& = \gamma (E_{\phi \phi_0} \Phi)^{\textrm{\tiny{T}}} U E_{r r_0} R.
	\end{align*}
%\cos \phi \\
%		
Finally, expressing the forcing function $f$ in terms of the %Zernike circle polynomials
Zernike polynomials as
    $$f = \Phi^{\textrm{\tiny{T}}}F R,$$
where $F$ contains the coefficients of $f,$ the integral on the right side of (\ref{MainIntegralEquation}) can be written as
    $$
    \int_{\phi_0}^{\phi} \int_{r_0}^r f \ dr \ d \phi  = (E_{\phi \phi_0} \Phi)^{\textrm{\tiny{T}}} F E_{r r_0} R.
    $$
%For ease, the
The operator matrices and the corresponding notation are summarized below:
\begin{enumerate}
\item $\int_{\phi_0}^{\phi} \Phi \ d \phi = E_{\phi \phi_0} \Phi.$
\item $\int_{\phi_0}^{\phi} \cos^2 \phi \ \Phi \ d\phi = E_{\phi \phi_0}^{\cos^2 \phi} \Phi.$
\item $\int_{\phi_0}^{\phi} \sin^2 \phi \ \Phi \ d\phi = E_{\phi \phi_0}^{\sin^2 \phi} \Phi.$
\item   $\int_{r_0}^{r} R(r) \ dr = E_{r r_0} R(r).$
\item $\int_{\phi_0}^{\phi} \cos 2\phi \ \Phi \ d\phi = E_{\phi \phi_0}^{\cos 2\phi} \Phi.$
\end{enumerate}
Putting everything together, (\ref{MainIntegralEquation}) reduces to the algebraic equation
\begin{align}
	\Phi^{\textrm{\tiny{T}}}
	&
	\left[
	\alpha  \left(E_{\phi \phi_0}^{\cos^2 \phi}\right)^{\textrm{\tiny{T}}}  U M_R^r
	+
	\beta  \left(E_{\phi \phi_0}^{\sin^2 \phi}\right)^{\textrm{\tiny{T}}}  U M_R^r
	 -
   	\alpha  \left( E_{\phi \phi_0}^{\cos^2 \phi}\right)^{\textrm{\tiny{T}}} \mathbf{h} M_{r_0}^{\textrm{\tiny{T}}}
   	-
   	\beta  \left( E_{\phi \phi_0}^{\sin^2 \phi}\right)^{\textrm{\tiny{T}}} \mathbf{h} M_{r_0}^{\textrm{\tiny{T}}}
	\right.
	\nonumber \\
	& -
	\alpha \left(E_{\phi \phi_0}^{\cos^2 \phi}\right)^{\textrm{\tiny{T}}}  U E_{r r_0}
	-
	\beta  \left(E_{\phi \phi_0}^{\sin^2 \phi}\right)^{\textrm{\tiny{T}}}  U E_{r r_0}
	-
	\frac{(\alpha - \beta)}{2}  \left(M_{\Phi}^{\sin 2\phi}\right)^{\textrm{\tiny{T}}} U E_{r r_0}
	 +
	 \frac{\alpha - \beta}{2} M_{\sin 2\phi_0} \mathbf{g}^{\textrm{\tiny{T}}} E_{r r_0} \nonumber \\
	 & +
	(\alpha - \beta)  \ \left(E_{\phi \phi_0}^{\cos 2\phi}\right)^{\textrm{\tiny{T}}} U E_{r r_0}
	+
    	\gamma E_{\phi \phi_0}^{\textrm{\tiny{T}}} U E_{r r_0} \Big] R(r)
	=
\Phi^{\textrm{\tiny{T}}} E_{\phi \phi_0}^{\textrm{\tiny{T}}} F E_{r r_0} R(r). \label{MatrixEquationGeneral}
	\end{align}
Equation (\ref{MatrixEquationGeneral}) has to be solved for the matrix $U$ to get an approximation of the solution $u$ of the original PDE (\ref{FirstOrderPDEPolar}).
To solve for $U,$ it is convenient to rewrite (\ref{MatrixEquationGeneral}) using the vector and tensor product representation introduced in (\ref{EQ:VectorMatrixTensorProduct}) of Section~\ref{sec:Intro}.
With this notation, (\ref{MatrixEquationGeneral}) becomes
    \begin{align}
    & \left(\alpha {M_R^r}^{\textrm{\tiny{T}}} \otimes \left(E_{\phi \phi_0}^{\cos 2 \phi}\right)^{\textrm{\tiny{T}}}
     + \beta {M_R^r}^{\textrm{\tiny{T}}} \otimes \left(E_{\phi \phi_0}^{\sin 2 \phi}\right)^{\textrm{\tiny{T}}}
     - \alpha E_{r r_0}^{\textrm{\tiny{T}}} \otimes \left(E_{\phi \phi_0}^{\cos 2 \phi}\right)^{\textrm{\tiny{T}}}
     - \beta E_{r r_0}^{\textrm{\tiny{T}}} \otimes \left(E_{\phi \phi_0}^{\sin 2 \phi}\right)^{\textrm{\tiny{T}}}
     \right.
     \nonumber \\
    & \phantom{++++}
    - \frac{\alpha - \beta}{2} E_{r r_0}^{\textrm{\tiny{T}}} \otimes \left(M_{\phi}^{\sin 2\phi} \right)^{\textrm{\tiny{T}}}
    + (\alpha - \beta) E_{r r_0}^{\textrm{\tiny{T}}} \otimes \left(E_{\phi \phi_0}^{\cos 2\phi}\right)^{\textrm{\tiny{T}}}
    + \gamma E_{rr_0}^{\textrm{\tiny{T}}} \otimes E_{\phi \phi_0}^{\textrm{\tiny{T}}}\Big) \textrm{vec}(U)
    \nonumber \\
    & \phantom{+++++}
    = \textrm{vec}\left(
    E_{\phi \phi_0}^{\textrm{\tiny{T}}} F E_{ r r_0} +
    \alpha \left(E_{\phi \phi_0}^{\cos^2\phi}\right)^{\textrm{\tiny{T}}} \mathbf{h} M_{r_0}^{\textrm{\tiny{T}}}
    +
    \beta \left(E_{\phi \phi_0}^{\sin^2\phi}\right)^{\textrm{\tiny{T}}} \mathbf{h} M_{r_0}^{\textrm{\tiny{T}}}
    - \frac{\alpha - \beta}{2} M_{\sin 2\phi_0} \mathbf{g}^{\textrm{\tiny{T}}} E_{r r_0}
    \right)  \label{eq:TensorRep}
    \end{align}
    which can be thought of as a linear system
    \begin{equation} \label{eq:Ax=y}
    A \mathbf{x} = \mathbf{b},
    \end{equation}
    where $\mathbf{x} = \textrm{vec}(U)$ is an unknown vector of size $MN$, $\mathbf{b}$ is a known vector also of size $MN,$ and $A$ is a sparse matrix of order $MN$. The solution $\mathbf{x}$ is then reshaped as an $M\times N$ matrix $U$ which gives the approximate solution $\tilde{u}(r,\phi)=\Phi^T(\phi)UR(r)$ in (\ref{EQ:ApproxSol}). One can consider solving (\ref{eq:Ax=y}) in two ways. One way is to get the minimum norm least squares solution $\mathbf{x}=A^\dag \mathbf{b}$ where $A^\dag$ is the standard matrix pseudo-inverse
or Moore-Penrose inverse of $A$. This pseudo-inverse exists and unique for any matrix. The solution provided by
$A^\dag \mathbf{b}$ is a  least squares minimum norm solution and is called here the $l_2$ solution.
The other way is to get the minimum $l_1$-norm solution by linear programming using $l_1$-magic \cite{Romberg:2006}. In the latter case, the problem is  formulated  as
    \begin{eqnarray*}
        \text{Minimize } |x_1| + \cdots + |x_{MN}| \text{ subject to } A\mathbf{x} = \mathbf{b}.
    \end{eqnarray*}
\begin{remark}
\label{Rem:Approximation}
It is important to say a few words on the operational matrices $M_R^r$ and $E_{r r_0}$ in (\ref{IOP:M_Rr}) and (\ref{EQ:IOPRadialPoly}), respectively. In obtaining these matrices, all terms of degree greater than $n = 3$ have been neglected. For the sake of higher accuracy of the solution, these neglected terms can be represented in terms of the radial polynomials in $R(r),$ see(\ref{RadialBasis_3}), by projecting on the space spanned by $R(r)$. Alternatively, a Lagrange interpolation polynomial can be constructed using $R(r)$ to represent each of the neglected higher order terms. The calculated coefficients in the representation of these higher order terms can then be used in the integration operational matrix as explained in connection with (\ref{SOPDE12}) in the next section.

In the case of the FOPDE in Example~\ref{EX:FOPDE} below, when the projection of higher order terms is not considered, the solution surface with the $l_1$ method is found to be acceptable when compared with the actual solution but quite distorted with the $l_2$ method. It is found that projecting these higher order terms on the space of lower order radial polynomials yields solutions with higher accuracy in both the $l_1$ and $l_2$ methods.
This method of projection %of the ignored higher order terms on the space of lower order polynomials
has been used to obtain the results in Example~\ref{EX:FOPDE} that are shown in Figure~\ref{fig:figure3} although we have not displayed the updated matrices considering the projections in our calculations above. We have shown this for the case of a second order PDE in Section~\ref{Sec:SecondOrderPDE}. In the case of a second order PDE, as discussed in Section~\ref{Sec:SecondOrderPDE}, this method of projection is found to be crucial in getting a satisfactory solution.
\end{remark}
\begin{example} \label{EX:FOPDE}
\rm
Let $\alpha = 1,$ $\beta = -1,$ $\gamma = 1,$ and $f = e^{r\cos \phi}(1 + r \cos \phi)$ in (\ref{FirstOrderPDE}) and (\ref{FirstOrderPDEPolar}).  With this choice one can proceed to solve the following initial value problem:
    \begin{equation} \label{FirstOrderPDE_Example}
    r\cos 2\phi \frac{\partial u}{\partial r} - \sin 2\phi \frac{\partial u}{\partial \phi} + u =  e^{r\cos \phi}(1 + r \cos \phi)
    \end{equation}
subject to the initial conditions
    \begin{align*}
    u(0, \phi) &= 1,\\
    u(r, 0) &= e^r.
    \end{align*}
It can be checked by direct substitution that $u(r, \phi) = e^{r\cos \phi}$ is a solution to the above initial value problem (\ref{FirstOrderPDE_Example}). By keeping terms %up to the third degree
of degree at most three in the expansion by Zernike polynomials, an approximation of the actual solution $u(r, \phi) = e^{r\cos \phi}$ is
\begin{eqnarray*}
\widetilde{u}(r, \phi) &=& 1 + r \cos \phi + \frac 12 r^2 \cos^2 \phi + \frac 16 r^3 \cos^3 \phi
=
1 + r \cos \phi + \frac 14 r^2 \cos 2 \phi + \frac{1}{24} r^3 \cos 3 \phi + \frac18 r^3 \cos \phi \nonumber \\
&=&
\Phi^{\textrm{\scriptsize{T}}}(\phi) U R(r), \label{ExpU}
\end{eqnarray*}
where
$$
U =
\left[
\arraycolsep=2.4pt\def\arraystretch{.9}
\begin{array}{cccccc}
1 & 0 & 0 & \tfrac14 & 0 & 0 \\
0 & 1 & 0 & 0 & 0 & \tfrac18 \\
0 & 0 & 0 & 0 & 0 & 0 \\
0 & 0 & 0 & \tfrac14 & 0 & 0 \\
0 & 0 & 0 & 0 & 0 & 0 \\
0 & 0 & 0 & 0 & 0 & \tfrac{1}{24} \\
0 & 0 & 0 & 0 & 0 & 0 \\
\end{array}
\right].
$$
Expanding the forcing function $f$ in terms of the Zernike polynomials, up to an approximation of order three, gives
	\begin{align}
	f(r, \phi)  &= e^{r\cos \phi}(1 + r \cos \phi)
	=
	1 + 2r\cos \phi + \frac34 r^2 + \frac34 r^2 \cos 2\phi + \frac{r^3}{2}\cos \phi + \frac{r^3}{6} \cos 3 \phi \nonumber \\
	&=
	\Phi^{\textrm{\scriptsize{T}}}(\phi) F R(r) \label{ExpForcingF}
	\end{align}
where
	$$F =
	\left[
	\arraycolsep=2.4pt\def\arraystretch{.9}
	\begin{array}{cccccc}
	1 & 0 & 0 & \tfrac34 & 0 & 0 \\
	0 & 2 & 0 & 0 & 0 & \tfrac12 \\
	0 & 0 & 0 & 0 & 0 & 0 \\
	0 & 0 & 0 & \tfrac34 & 0 & 0 \\
	0 & 0 & 0 & 0 & 0 & 0 \\
	0 & 0 & 0 & 0 & 0 & \tfrac13 \\
	0 & 0 & 0 & 0 & 0 & 0
	\end{array}
	\right].
	$$
The other unknowns in (\ref{MatrixEquationGeneral}) are the vectors $\mathbf{h}$ and $\mathbf{g}$ which can be substituted as follows.
Let
$$
\mathbf{h}^{\textrm{\tiny{T}}} = \left[
\arraycolsep=2pt\def\arraystretch{.5}
\begin{array}{cccccccc}
1 & 0 & 0 & 0 & 0 & 0 & 0
\end{array}
 \right].
$$
Then $h(\phi) = \mathbf{h}^{\textrm{\tiny{T}}} \Phi(\phi) = 1$ as needed. Let
$$
\mathbf{g}^{\textrm{\tiny{T}}} = \left[
\arraycolsep=2pt\def\arraystretch{.5}
\begin{array}{ccccccc}
1 & 1 & 0 & \frac12 & 0 & \frac16
\end{array}
\right].
$$	
Then
$$
\mathbf{g}^{\textrm{\tiny{T}}} R(r) = \left[
\arraycolsep=2.4pt\def\arraystretch{.75}
\begin{array}{ccccccc}
1 & 1 & 0 & \frac12 & 0 & \frac16
\end{array}
\right]
\left[
\arraycolsep=2.4pt\def\arraystretch{.75}
\begin{array}{l}
1\\
r\\
2r^2 - 1\\
r^2 \\
3r^3 - 2r \\
r^3
\end{array} \right] = 1 + r + \frac12 r^2 + \frac 16 r^3
$$
which can be thought of as the approximation of $g(r) = e^r$ using terms of degree at most three. These specific vectors are then used in (\ref{eq:TensorRep}) to solve for the unknown $U.$ The minimum least squares solution using MATLAB is $\mathbf{x} = \textrm{psinv}(A) \mathbf{y}$ from which converting vector $\mathbf{x}$ to matrix $U$ we get
    $$
    U = \mathrm{vec2mat}(\mathbf{x})
    = \left[
    \arraycolsep=2.4pt\def\arraystretch{.75}
    \begin{array}{cccccc}
    1 & 0 & 0 & 0.25 & 0 & 0 \\
    0 & 0.2 & 0 & 0 & -0.4 & 0 \\
    0 & 0 & 0 & 0 & 0 & 0 \\
    0 & 0 & 0 & 0.25 & 0 & 0 \\
    0 & 0 & 0 & 0 & 0 & 0 \\
    0 & 0 & 0 & 0 & 0 & 0 \\
    0 & 0 & 0 & 0 & 0 & 0 \\
    \end{array}
    \right],
    $$
which in terms of %Zernike circle polynomials
Zernike polynomials is
    \begin{align*}
    \widetilde{u}(r, \phi) & = 1 + \frac14 r^2 + 0.2 r \cos \phi - 0.4(3r^3 - 2r) \cos \phi + \frac14 r^2 \cos 2\phi \\
    &= 1 + r \cos \phi + \frac14 r^2(1 + \cos 2\phi) - \frac65 r^3 \cos \phi.
    \end{align*}
Alternatively, using an $l_1$ optimization algorithm based on basis pursuit as explained in \cite{Romberg:2006}
% and \cite{Kristen:2006},
the solution matrix for $U$ is
      $$
    U = \mathrm{vec2mat}(\mathbf{x})
    = \left[
    \arraycolsep=2.4pt\def\arraystretch{.75}
    \begin{array}{cccccc}
    1 & 0 & 0 & 0.25 & 0 & 0 \\
    0 & 0 & 0 & 0 & -0.5 & 0 \\
    0 & 0 & 0 & 0 & 0 & 0 \\
    0 & 0 & 0 & 0.25 & 0 & 0 \\
    0 & 0 & 0 & 0 & 0 & 0 \\
    0 & 0 & 0 & 0 & 0 & 0 \\
    0 & 0 & 0 & 0 & 0 & 0 \\
    \end{array}
    \right],
    $$
which gives
    \begin{equation*}
    \widetilde{u}(r, \phi)  = 1 + \frac14 r^2(1 + \cos 2\phi) - \frac12 (3r^2 - 2r) \cos \phi
    = 1 + r \cos \phi + \frac14 r^2(1 + \cos 2\phi) - \frac32 r^3 \cos \phi.
    \end{equation*}
Both of these may be compared with the exact solution mentioned at the start of the example.
\end{example}
%

%\vspace{-5mm}

\begin{figure}
\centering
\includegraphics[width=.4\textwidth]{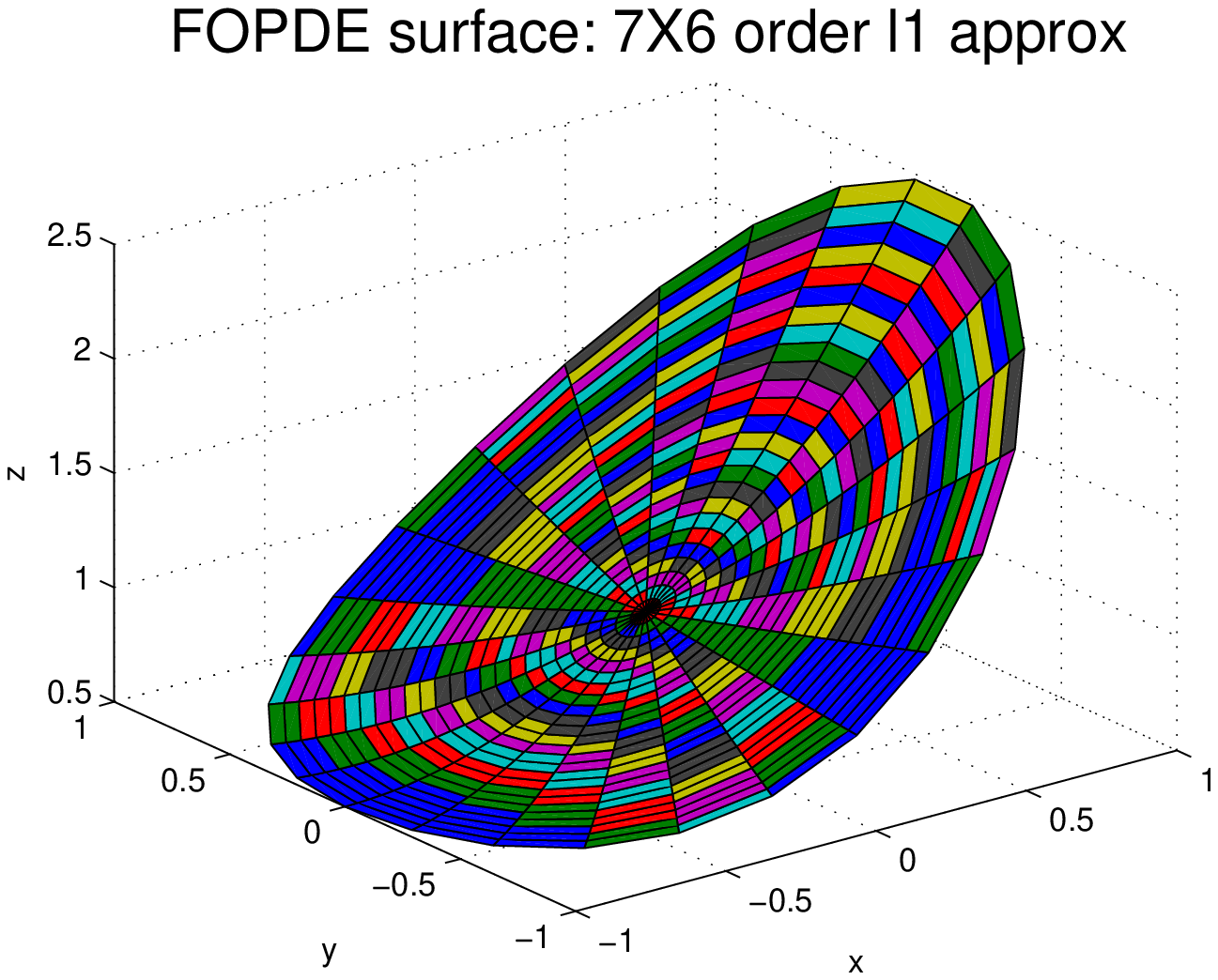}
\includegraphics[width=.4\textwidth]{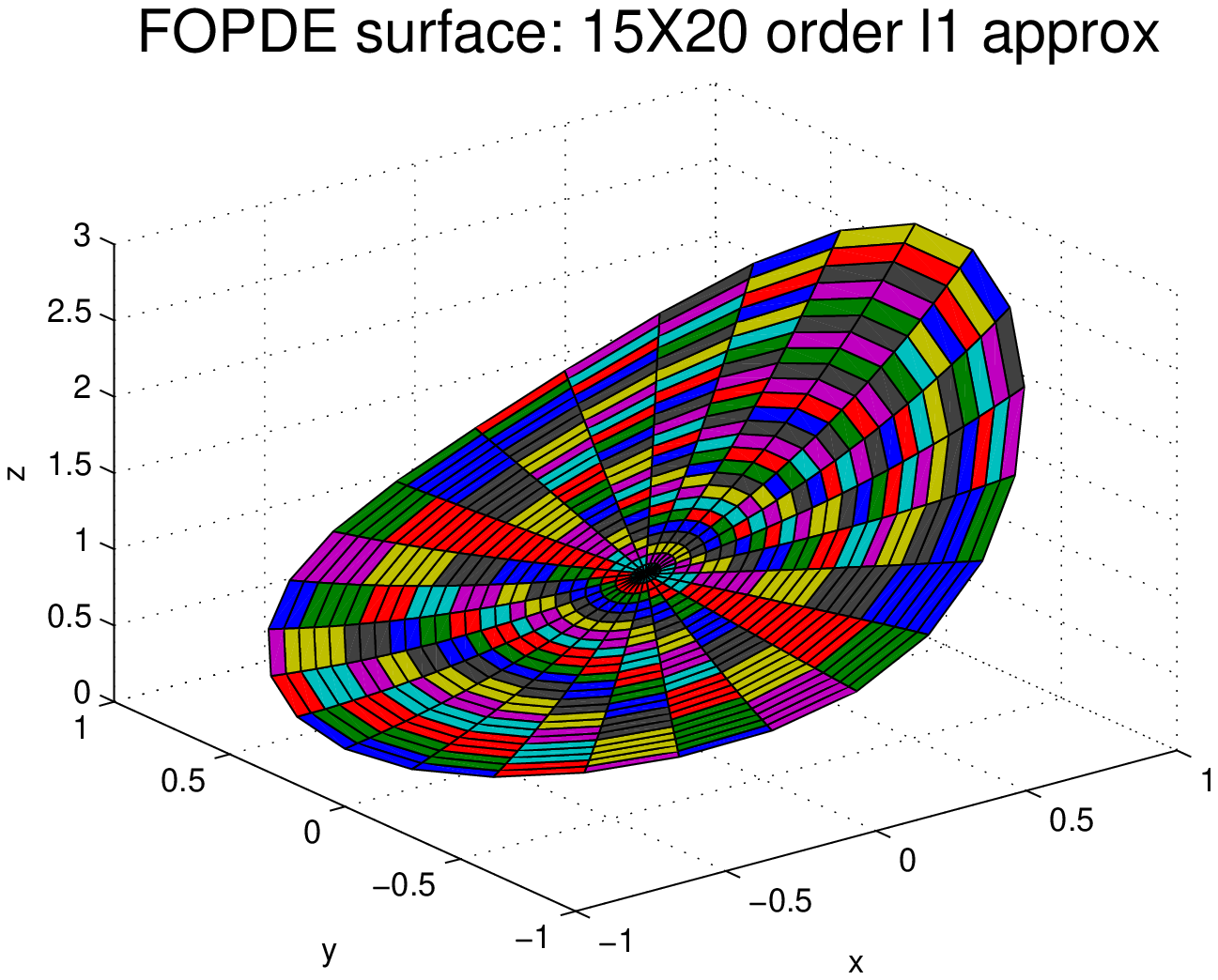}\\
\label{fig:figure2}
\includegraphics[width=.4\textwidth]{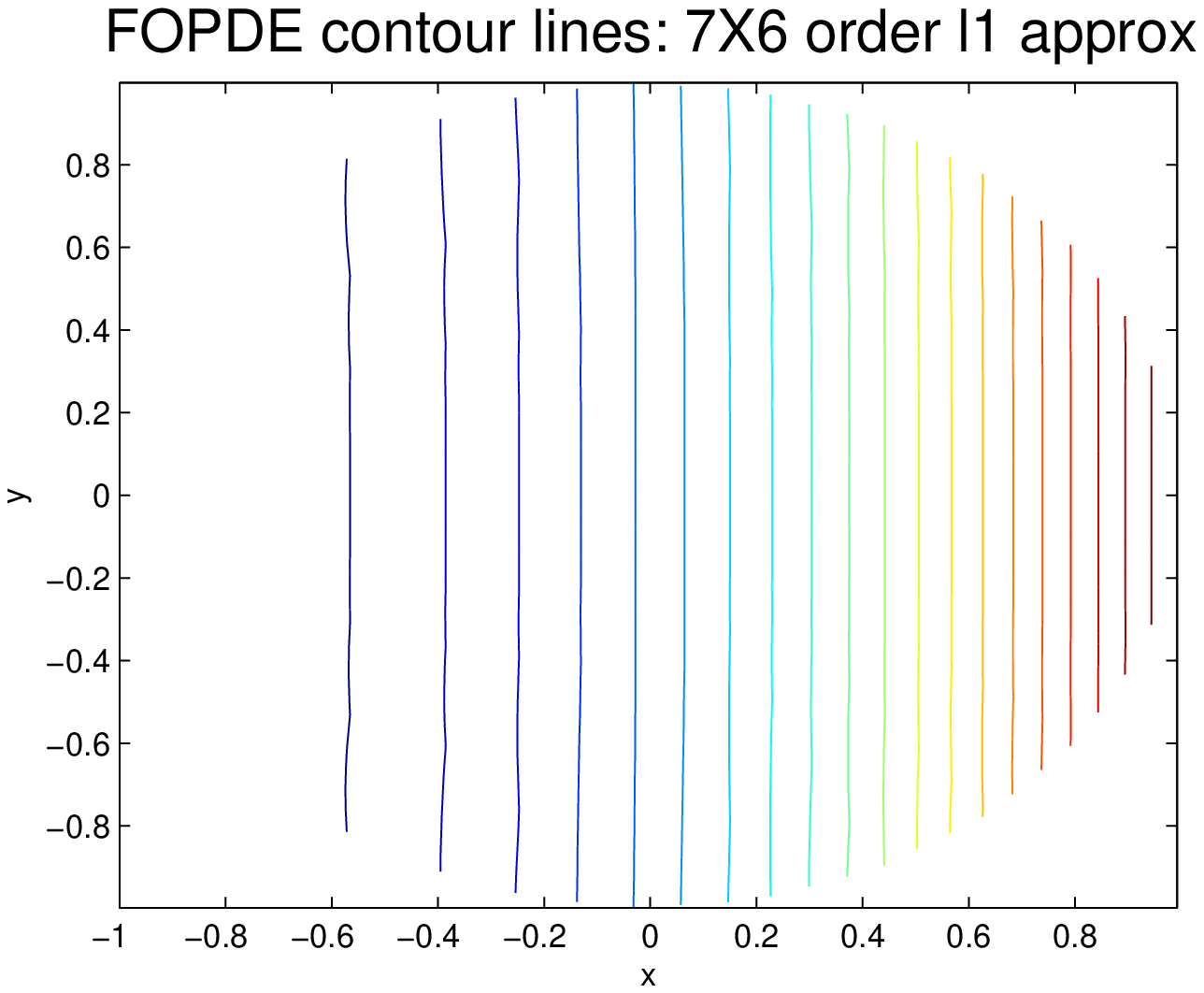}
\includegraphics[width=.4\textwidth]{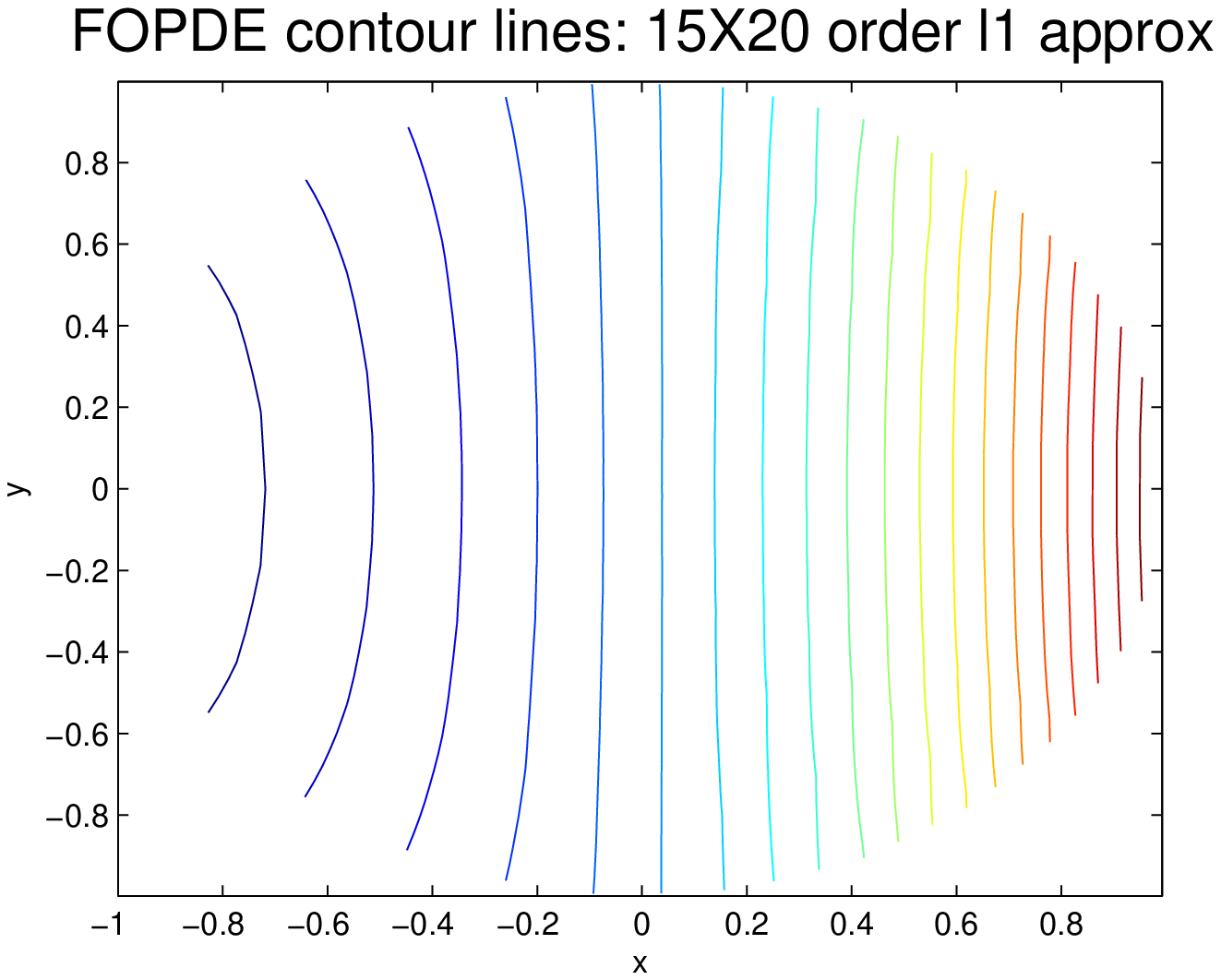}\\
\includegraphics[width=.4\textwidth]{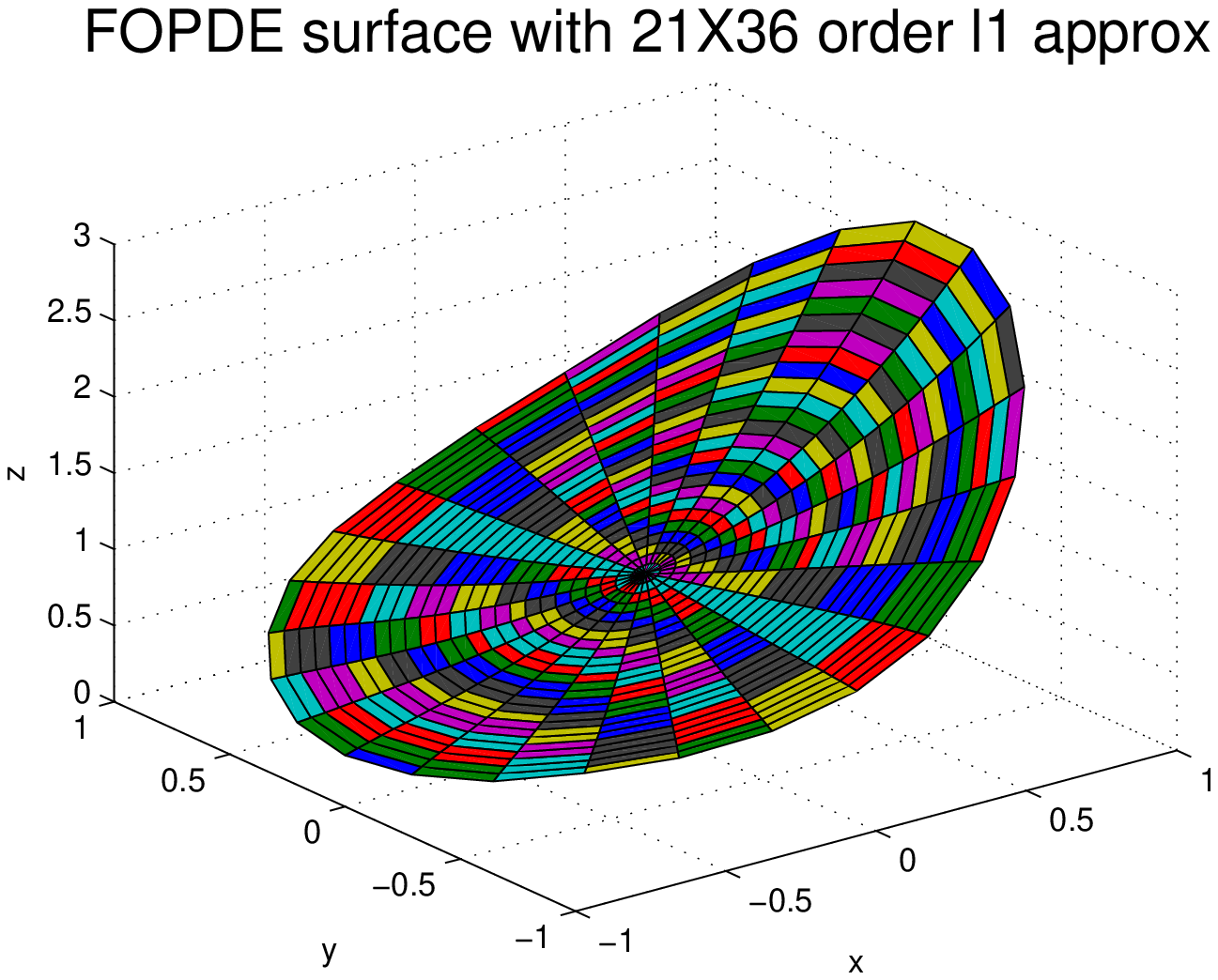}
\includegraphics[width=.4\textwidth]{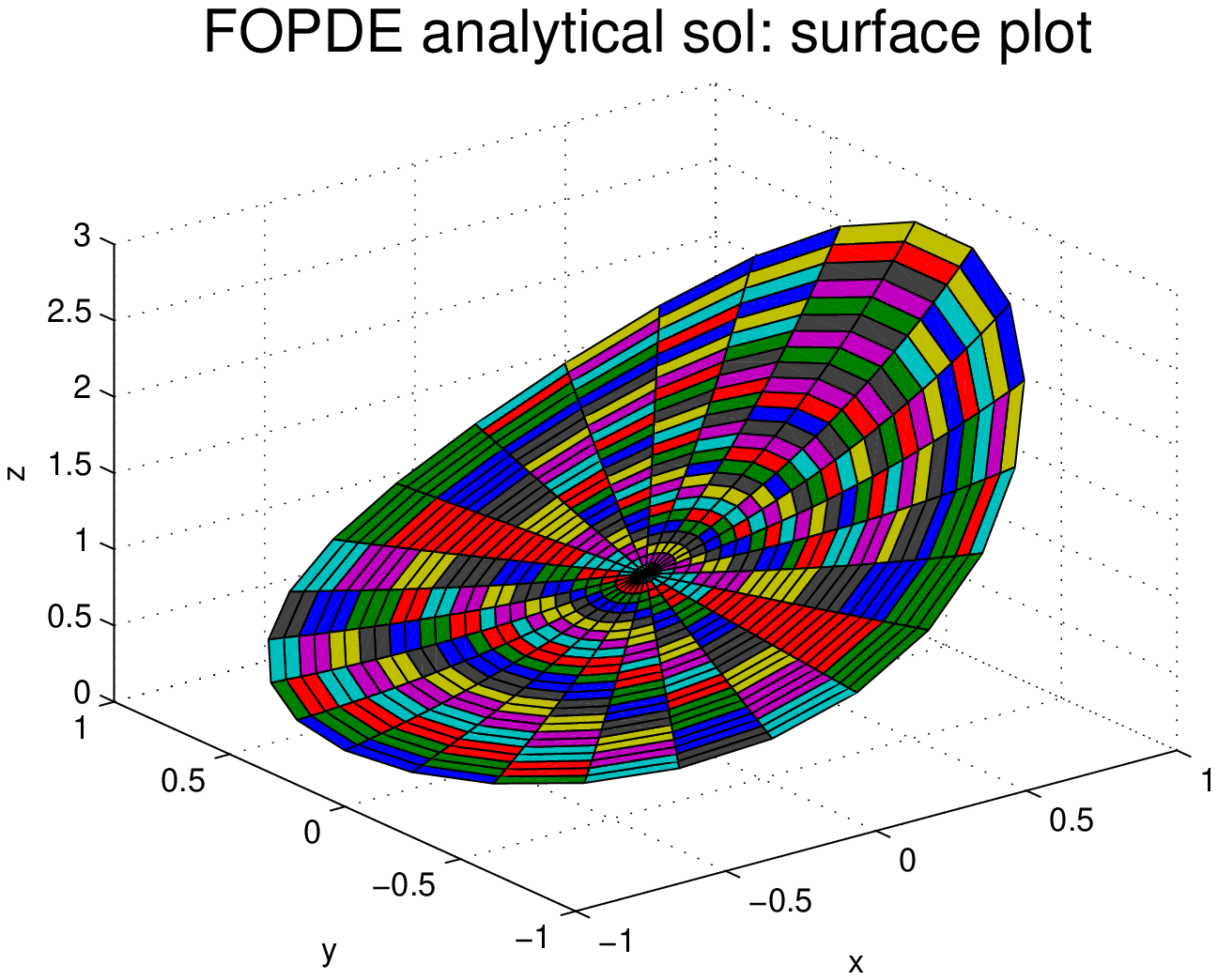}\\
\includegraphics[width=.4\textwidth]{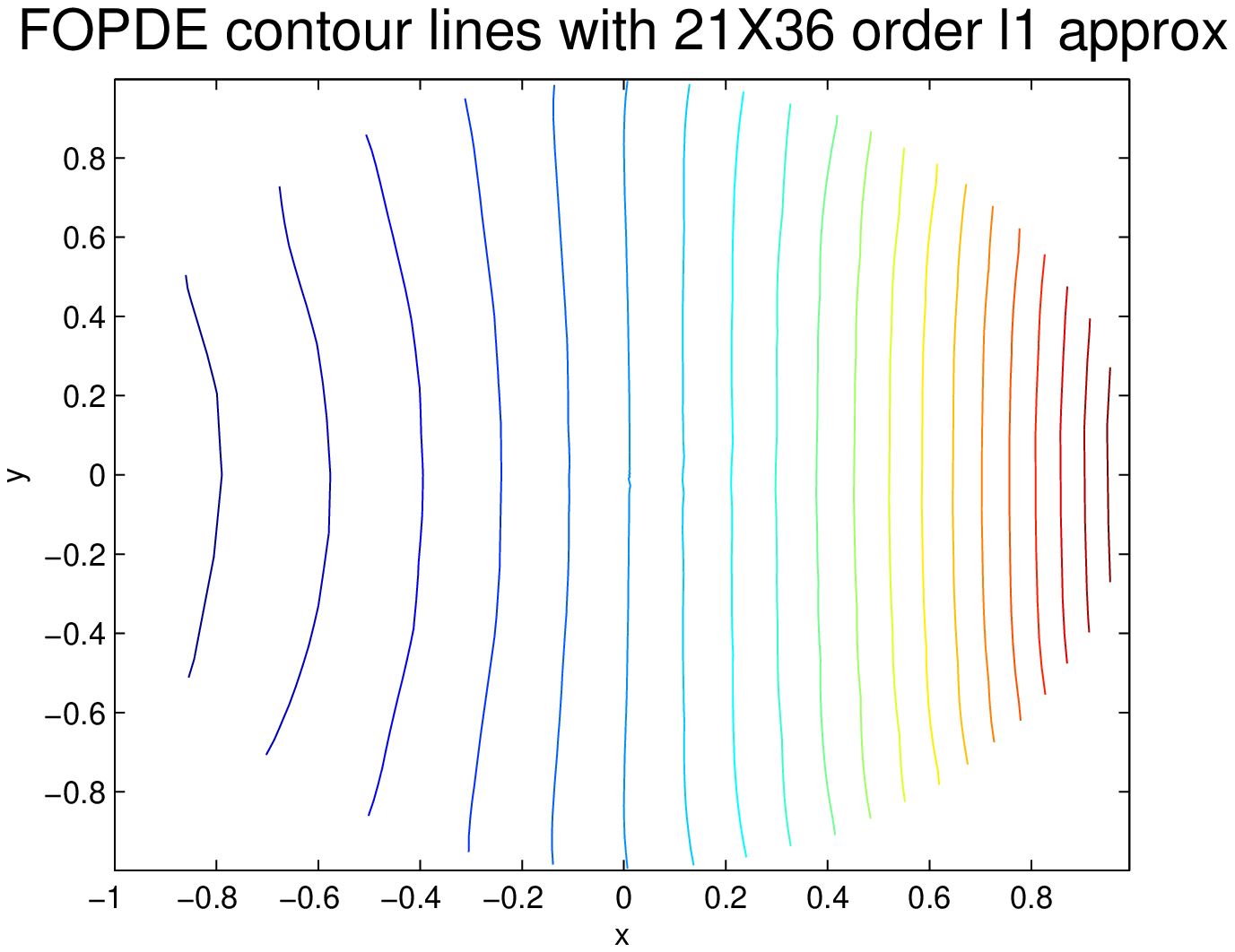}
\includegraphics[width=.4\textwidth]{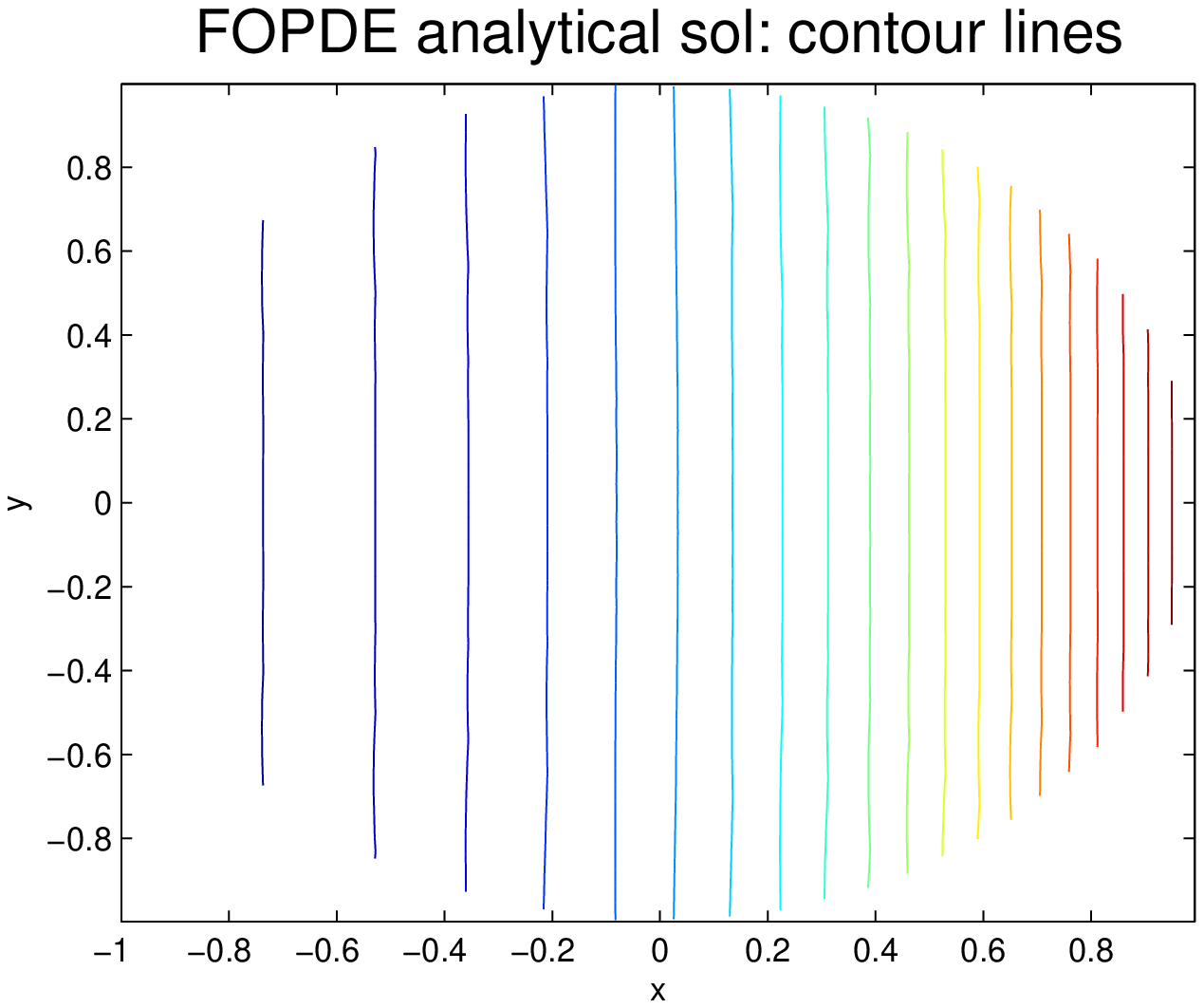}\\
\caption{FOPDE: Solution surfaces and contour lines.}
\label{fig:figure3}
\end{figure}
\begin{figure}
\centering
\includegraphics[width=.4 \textwidth]{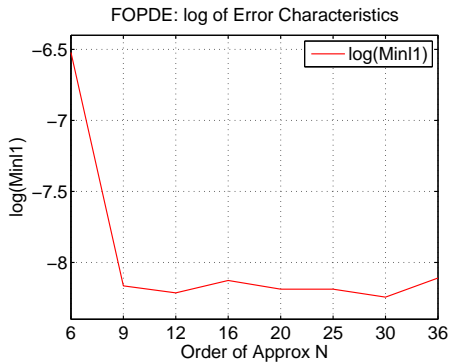} %\hfill
\caption{FOPDE solution: log of error curve with Lagrange approximation of higher order terms.}
\label{fig:figure4}
\end{figure}
%%%%%%%%%%%%%%%%%%%%%%%%%%%%%%%%%%%%%%%%%%%%%%%%%%%
\paragraph{\textbf{Error estimates}} Recall that the sizes of $\Phi(\phi)$ and $R(r)$ are $M$ and $N,$ respectively.
To study the error for different orders of approximation, the solution of FOPDE (\ref{FirstOrderPDE_Example}) has been determined numerically for the pair $(M,N)$ to be (7,6), (9,9), (11,12), (13,16), (15,20), (17,25), (19,30) and (21,36). The solution surfaces have then been compared with that generated by the exact solution.
To do this, one needs to solve (\ref{eq:Ax=y}): $A\mathbf{x}=\mathbf{b}.$ Recall that $A$ is a sparse matrix of order $MN,$ and $M = 21$ and $ N = 36$ for the highest order of approximation considered.
The surfaces and contour lines for values of $(M,N)$ equal to (7,6), (15,20) and (21,36) obtained by the minimum $l_1$-norm  solution are shown in Figure~\ref{fig:figure3} which may be compared with the actual solution surface and contour lines, also displayed in Figure~\ref{fig:figure3}. The surfaces provided by the minimum norm least squares solution are much inferior to the minimum $l_1$-norm  solution, and not shown here. The Mean Square Error (MSE) between the actual and the computed solution is given by the mathematical formula:
\begin{eqnarray*}
\text{MSE} =\tfrac{1}{m_1n_1} \sum^{m_1}_1  \sum^{n_1}_1 [X(i,j) - X_c(i,j)]^2 ,
\end{eqnarray*}
where $X(x,y)$ represents the actual solution surface, $X_c(x,y)$, the computed surface, and $m_1 \times n_1$ are the number of grid points on the surface. By comparing the minimum $l_1$-norm  solution with that of the
minimum norm least squares solution for different orders of approximation it is found that the minimum $l_1$-norm  solution is much superior as can be inferred from Table~\ref{FOPDE_error}. The log of the error in the minimum $l_1$-norm solution is shown in Figure~\ref{fig:figure4}. In Figure~\ref{fig:figure4}, the eight distinct points on the error curves correspond to the values of $(M,N)$ given at the start of this paragraph. The minimum $l_1$-norm  solution has less error when the higher order terms are projected on the space of lower degree polynomials by using Lagrange interpolation, see Remark~\ref{Rem:Approximation}.
\begin{table}[!t]
\caption{Errors in solving FOPDE}\label{FOPDE_error}
\vspace*{3mm}
\begin{center}
{\scriptsize
\begin{tabular}{|r|r|r|r|r|r|r|r|r|} \hline \label{tab:FOPSE_error}
Order of Zernike pol.& 7 $\times$ 6 & 9 $\times$ 9 & 11 $\times$ 12 & 13 $\times$ 16 & 15 $\times$ 20 & 17 $\times$ 25 & 19 $\times$ 30 & 21 $\times$ 36 \\
\hline
$\ell_2$-error (order $10^{-4}$) & 783.0700  & 13.2580  & 33.1340  & 24.0200  & 239.6800  & 3.1834  & 187.5100   & 5.8121 \\
\hline
$\ell_1$-error (order $10^{-4}$)  & 14.7180  & 2.8439  & 2.7084  & 2.9555  & 2.7794  & 2.7793  & 2.6274  & 3.0066
\\
\hline
\end{tabular}}
\end{center}
\end{table}
%
%%%%%%%%%%%%%%%%%%%%%%%%%%%%%%%%%%%%%%%%%%%%%%%%%%%%%%%%%%
\section{Solving second order partial differential equations}
\label{Sec:SecondOrderPDE}
%%%%%%%%%%%%%%%%%%%%%%%%%%%%%%%%%%%%%%%%%%%%%%%%%%%%%%%%%%
%
The general form for a linear second order partial differential equation (SOPDE) is
    \begin{equation} \label{EQ:Gen_SOPDE}
    a \frac{\partial^2 u}{\partial x^2} + b \frac{\partial^2 u}{\partial x \partial y} + c \frac{\partial^2 u}{\partial y^2} + a_1 \frac{\partial u}{\partial x} + a_2 \frac{\partial u}{\partial y} + a_0 u = f(x,y)
    \end{equation}
where $a,$ $b,$ $c,$ $a_1,$ $a_2,$ $a_0,$ and $f$ are continuous functions of $x$ and $y.$
Since we are motivated by problems involving SOPDEs arising in circular regions such as in a refracted wavefront through an optical system, we shall consider a SOPDE that is invariant under rotations of the coordinate axes about the origin. We have considered below a rotational invariant second order linear PDE with discontinuous BCs, and this special type includes Poisson's PDE appearing in physical systems. In addition to the above mentioned motivation, this choice is also for the purpose of demonstration. However, it is important to emphasize that the proposed method can be applied to PDEs that are not rotational invariant such as parabolic and hyperbolic PDEs, and following the procedure described below one can set up an algebraic equation as in (\ref{EQ:LinearSystem}) that will lead to the solution of the desired PDE. To demonstrate our method, we shall consider
\begin{equation} \label{SOPDE1}
\bigtriangleup u + \alpha \left(x \frac{\partial}{\partial x} + y \frac{\partial}{\partial y}\right)^2 u + \beta \left(x \frac{\partial}{\partial x} + y \frac{\partial}{\partial y}\right) u + \gamma u = f.
\end{equation}
In polar coordinates $r$ and $\phi,$ equation (\ref{SOPDE1}) becomes
\begin{equation} \label{SOPDE2}
(1 + \alpha r^2) \frac{\partial^2 u}{\partial r^2} + \left(\frac 1r + (\alpha + \beta)r \right) \frac{\partial u}{\partial r} + \frac{1}{r^2} \frac{\partial^2 u}{\partial \phi^2} + \gamma u = f.
\end{equation}
The aim is to solve this SOPDE by integration operational matrix using %Zernike circle polynomials
Zernike polynomials in the region $0 <r <r_0$ and $0 <\phi <2 \pi$ with the given boundary conditions,
%\begin{eqnarray*}
%$$
%g(\phi) = u(r_0,\phi),\;
%h(\phi) = {\partial u(r,\phi) \over \partial r} \bigg |_{r=r_0},\;
%p(r) = u(r,\phi_0), \;
%q(r) = {\partial u(r,\phi) \over \partial \phi}\bigg |_{\phi=\phi_0}.
%$$
$$
u(r_0,\phi) = g(\phi),\;
{\partial u(r,\phi) \over \partial r} \bigg |_{r=r_0} = h(\phi),\;
u(r,\phi_0) = p(r), \;
{\partial u(r,\phi) \over \partial \phi}\bigg |_{\phi=\phi_0} = q(r),
$$
%\end{eqnarray*}
where any of the functions may have discontinuities.
Multiplying both sides of (\ref{SOPDE2}) by $r^2$ gives
\begin{equation} \label{SOPDE3}
r^2(1 + \alpha r^2) \frac{\partial^2 u}{\partial r^2} +(r + \alpha r^3 + \beta r^3 ) \frac{\partial u}{\partial r} + \frac{\partial^2 u}{\partial \phi^2} +\gamma r^2  u =r^2 f.
\end{equation}
Integrating twice with respect to $r$ from $r_0$ to $r$, the first, third and sixth terms in the left side of (\ref{SOPDE3}), which do not contain the parameters $\alpha, \beta$ and $\gamma$, one gets
\begin{eqnarray*}
&& \int_{r_0}^r\int_{r_0}^r r^2{\partial^2u \over \partial r^2}\,(dr)^2 +\int_{r_0}^r\int_{r_0}^r r{\partial u \over \partial r}\,(dr)^2+\int_{r_0}^r\int_{r_0}^r {\partial^2u \over \partial \phi^2}\,(dr)^2 \notag \\
=&&r^2u(r,\phi)-r_0^2g(\phi)-3\int_{r_0}^r ru(r,\phi) \,dr - \int_{r_0}^r r_0^2h(\phi) \,dr \\
&&+\int_{r_0}^r r_0g(\phi)  \,dr +\int_{r_0}^r \int_{r_0}^r u(r,\phi)\,(dr)^2
+\int_{r_0}^r\int_{r_0}^r {\partial^2u \over \partial \phi^2}\,(dr)^2.
\end{eqnarray*}
Again, integrating the above expression with respect to $\phi$ twice from $\phi_0$ to $\phi$ gives
\begin{eqnarray}
&&\int_{\phi_0}^\phi \int_{\phi_0}^\phi r^2u(r,\phi)(d \phi)^2 -\int_{\phi_0}^\phi \int_{\phi_0}^\phi r_0^2g(\phi)(d \phi)^2 -3\int_{\phi_0}^\phi \int_{\phi_0}^\phi \int_{r_0}^r ru(r,\phi) \,dr (d \phi)^2 \notag \\
&&- \int_{\phi_0}^\phi \int_{\phi_0}^\phi \int_{r_0}^r r_0^2h(\phi) \,dr (d \phi)^2+ \int_{\phi_0}^\phi \int_{\phi_0}^\phi \int_{r_0}^r r_0g(\phi)  \,dr  (d \phi)^2 \notag  \\
&& + \int_{\phi_0}^\phi \int_{\phi_0}^\phi \int_{r_0}^r \int_{r_0}^r u(r,\phi)\,(dr)^2 (d \phi)^2
+\int_{\phi_0}^\phi \int_{\phi_0}^\phi \int_{r_0}^r\int_{r_0}^r {\partial^2u \over \partial \phi^2}\,(dr)^2 (d \phi)^2. \label{SOPDE10}
\end{eqnarray}
Expand the solution $u(r,\phi)$ of (\ref{SOPDE1}) in terms of %Zernike circle polynomials
Zernike polynomials up to some order $(m,n)$  where $n \ge m$ and $n-m$ is even. This gives an approximation of $u$ as $\widetilde{u}(r,\phi)=\Phi^T(\phi)\,U\,R(r),$ where $\Phi(\phi)$ is a matrix of size $M \times 1$, $R(r)$ is of size $N \times 1$, $M=2m+1$ and $N$ is the number of radial polynomials of degree less than or equal to $n$. Then each term of (\ref{SOPDE10}) has the following simplifications.
%
%\subsubsection{The First Term}
The first term in (\ref{SOPDE10}) can be written as \footnote{Strictly speaking, the resulting matrix representation is an approximation of some integral. However, we will always use the equality sign to express that the operators on the right side of the equality stand for corresponding integrals on the left side.}
\begin{eqnarray}
%&&
\int_{\phi_0}^\phi \int_{\phi_0}^\phi r^2u(r,\phi)(d \phi)^2
= \int_{\phi_0}^\phi \int_{\phi_0}^\phi r^2\,(\Phi^T(\phi)UR(r))(d\phi)^2
= \Phi^T(\phi)  E^T_{D\phi \phi_0} UM_R^{r^2}R(r),
\label{SOPDE11}
\end{eqnarray}
where $M_R^{r^2}$ is the matrix representation of $r^2R(r)$ with respect to $R(r)$, $E_{D\phi \phi_0}$ is the IOM of double integration of $\Phi(\phi)$ and is $E_{D\phi \phi_0}=E_{\phi \phi_0}^2$, and $E_{D\phi \phi_0}(1,1)=2\pi^2/3$. Using radial polynomials up to order $(3,3)$,
\begin{equation}
r^2 R(r) = \left[
\arraycolsep=2.4pt\def\arraystretch{.75}
\begin{array}{l}
r^2 \\
r^3 \\
2r^4-r^2 \\
r^4 \\
3r^5-2r^3\\
r^5 \\
  \end{array}
\right]
\approx
\left[
\arraycolsep=2.4pt\def\arraystretch{.75}
     \begin{array}{rrrrrr}
       0 & 0 & 0 & 1 & 0 & 0 \\
       0 & 0 & 0 & 0 & 0 & 1 \\
       0 & 0 & 0 &-1 & 0 & 0 \\
       0 & 0 & 0 & 0 & 0 & 0 \\
       0 & 0 & 0 & 0 & 0 & -2 \\
       0 & 0 & 0 & 0 & 0 & 0 \\
     \end{array}
   \right]
   \left[
   \arraycolsep=2.4pt\def\arraystretch{.75}
  \begin{array}{c}
1 \\
r \\
2r^2-1 \\
r^2 \\
3r^3-2r\\
r^3 \\
\end{array}
\right]=M_R^{r^2} R(r).\label{SOPDE12}
\end{equation}
As $R(r)$ contains radial polynomials of maximum degree 3, the terms $r^4$ and $r^5$ are ignored in the above. To obtain a better solution, we approximate $r^4$ and $r^5$ in terms of the set $\{1,r,r^2,r^3 \}$ by  Lagrange interpolation formula with equally spaced nodes (0,\,1/3,\,2/3,\,1) in the interval $0 \le r \le 1$ as,
\begin{eqnarray} \label{ApproxLagInterpolation}
r^4 & \approx & 2r^3-\tfrac{11}{9} r^2+\tfrac 29 r, \quad %\\
r^5  \approx  \tfrac 19 (25r^3-20r^2+4r),
\end{eqnarray}
and incorporate them in the IOM representation whenever they are encountered. Then,
\begin{eqnarray*}
2r^4-r^2 & \approx & \tfrac 49 r-\tfrac{31}{9} r^2+4r^3,\quad %\\
3r^5-2r^3   \approx  \tfrac 43r- \tfrac {26}{3}r^2+\tfrac {25}{3}r^3.
\end{eqnarray*}
Consequently, in (\ref{SOPDE12}), we upgrade $M_R^{r^2}$ to
%in the representation (\ref{SOPDE12}) for (3,3) order circle polynomials we change $M_R^{r^2}$ to
%is changed to
%\[r^2 R(r)= M_R^{r^2} R(r),\]
%where
    $$
    M_R^{r^2}
    =
    \left[
    \arraycolsep=2.4pt\def\arraystretch{.75}
     \begin{array}{rrrrrr}
       0 & 0        & 0 & 1              & 0 & 0 \\
       0 & 0        & 0 & 0              & 0 & 1 \\
       0 & \frac 49 & 0 &-\frac {31}{9}  & 0 & 4 \\
       0 & \frac 29 & 0 & -\frac {11}{9} & 0 & 2 \\
       0 & \frac 43 & 0 & -\frac {26}{3} & 0 & \frac {25}{3} \\
       0 & \frac 49 & 0 & -\frac {20}{9} & 0 & \frac {25}{9} \\
     \end{array}
    \right].
    $$
If these higher order terms are completely ignored (instead of considering their projection), then this approach of solving SOPDE using %Zernike circle polynomials
Zernike polynomials with IOM will fail.
\\
The second term in (\ref{SOPDE10}) is
\begin{align*}
&&-\int_{\phi_0}^\phi \int_{\phi_0}^\phi r_0^2g(\phi)(d \phi)^2% \\
%&=&
=-\int_{\phi_0}^\phi \int_{\phi_0}^\phi  \Phi^T(\phi) \mathbf{g} r_0^2(d \phi)^2 %\\
%&=
= - \Phi^T(\phi)  E^T_{D\phi \phi_0}  \mathbf{g}M_{r_0^2}^T\,R(r),
%\end{eqnarray*}
\end{align*}
where $g(\phi)=\Phi^T(\phi)\mathbf{g},$ %the representation
is the representation
of $u(r_0,\phi)=g(\phi)$ in terms of trigonometric functions, and $\mathbf{g}$ is an $M\times 1$ vector. $M_{r_0^2}$ is an $N\times 1$ vector with first element $r_0^2$ and others zero. Using %Zernike circle polynomials
Zernike polynomials up to degree three:
\begin{eqnarray*}
r^2_0 &=&\left[
\arraycolsep=2pt\def\arraystretch{.75}
  \begin{array}{llllll}
r_0^2 & 0 &
0 &
0 &
0 &
0
\end{array}
\right]\left[
\arraycolsep=2pt\def\arraystretch{.75}
  \begin{array}{c}
1 \\
r \\
2r^2-1 \\
r^2 \\
3r^3-2r\\
r^3 \\
\end{array}
\right]=M_{r_0^2}^TR(r).
\end{eqnarray*}
The third term in (\ref{SOPDE10}) is
\begin{eqnarray}
&-&3\int_{\phi_0}^\phi \int_{\phi_0}^\phi \int_{r_0}^r ru(r,\phi) \,dr (d \phi)^2 %\notag \\
%=&-& 3 \int_{\phi_0}^\phi \int_{\phi_0}^\phi \int_{r_0}^r \Phi^T(\phi)U\;rR(r)\,dr (d \phi)^2 \notag \\
%=&-&3
%
=-\Phi^T(\phi) E^T_{D\phi \phi_0} U\,E_{rr_0}^r R(r), \label{SOPDE13}
\end{eqnarray}
where $E_{rr_0}^r$ is the IOM of $rR(r)$ in which powers of $r$ higher than $n$ are included after approximating in terms of lower powers of $r,$ by the Lagrange interpolation formula with equally spaced nodes (0,\,1/3,\,2/3,\,1) in the interval $0 \le r \le 1$, as mentioned earlier.
\\
The fourth, fifth, and sixth terms in (\ref{SOPDE10}) are, respectively,
\begin{alignat}{2}
&- \int_{\phi_0}^\phi \int_{\phi_0}^\phi \int_{r_0}^r h(\phi)\,r_0^2 \,dr (d \phi)^2 &
%&=- \int_{\phi_0}^\phi \int_{\phi_0}^\phi \int_{r_0}^r \Phi^T(\phi) \mathbf{h}\,M^T_{r_0^2}\, R(r) \,dr (d \phi)^2 &
&=- \Phi^T(\phi)  E^T_{D\phi \phi_0}  \mathbf{h} M^T_{r_0^2} E_{rr_0} R(r), \label{SOPDE14}\\
&\int_{\phi_0}^\phi \int_{\phi_0}^\phi \int_{r_0}^r g(\phi)  \,r_0 \, dr  (d \phi)^2 &
%&=\int_{\phi_0}^\phi \int_{\phi_0}^\phi \int_{r_0}^r \Phi^T(\phi) \mathbf{g} M^T_{r_0}R(r) \, dr  (d \phi)^2 &
&=\Phi^T(\phi) E^T_{D\phi \phi_0} \mathbf{g} \,M^T_{r_0}\,E_{rr_0}R(r), \label{SOPDE15}\\
&\int_{\phi_0}^\phi \int_{\phi_0}^\phi \int_{r_0}^r \int_{r_0}^r u(r,\phi)\,(dr)^2 (d \phi)^2 &
%&=\int_{\phi_0}^\phi \int_{\phi_0}^\phi \int_{r_0}^r \int_{r_0}^r \Phi^T(\phi) U R(r)\,(dr)^2 (d \phi)^2 &
&= \Phi^T(\phi) E^T_{D\phi \phi_0} U E_{Drr_0} R(r) .
\label{SOPDE16}
\end{alignat}
The seventh term in (\ref{SOPDE10}) is
\begin{eqnarray}
 \int_{\phi_0}^\phi\int_{\phi_0}^\phi \int_{r_0}^r \int_{r_0}^r {\partial^2u \over \partial \phi^2}\,(dr)^2 \,(d\phi)^2
&=&\int_{r_0}^r \int_{r_0}^r [u(r,\phi)-u(r,\phi_0)]\,(dr)^2-\int_{\phi_0}^\phi \int_{r_0}^r \int_{r_0}^r q(r) (dr)^2 \,d\phi \notag\\
&=&\Phi^T(\phi)U  E_{Drr_0} R(r) -  \Phi^T(\phi)\mathbf{e}_1 \mathbf{p}^T  E_{Drr_0} R(r)  \label{SOPDE17} \\
&& \hspace{50mm} -  \Phi^T(\phi) E^T_{\phi \phi_0}     \mathbf{e}_1 \mathbf{q}^T  E_{Drr_0} R(r), \notag
\end{eqnarray}
where $\mathbf{e}_1=[1\;0\; \cdots \; 0]^T$ is of size $M \times 1$. Inserting the simplifications (\ref{SOPDE11})-(\ref{SOPDE17}) in (\ref{SOPDE10}), gives
\begin{eqnarray}
&&\Phi^T(\phi) [
 E^T_{D\phi \phi_0} UM_R^{r^2}
- E^T_{D\phi \phi_0}  \mathbf{g}M_{r_0^2}^T
-3 E^T_{D\phi \phi_0} U\,E_{rr_0}^r
 - E^T_{D\phi \phi_0}  \mathbf{h} (M_{r_0^2})^T E_{rr_0}
 + E^T_{D\phi \phi_0} \mathbf{g}(M_{r_0} )^TE_{rr_0}
  \notag \\
&& \phantom{+++++++}
+ E^T_{D\phi \phi_0} U E_{Drr_0}
+ I_{m}U  E_{Drr_0} -\mathbf{e}_1 \mathbf{p}^T  E_{Drr_0}  -E^T_{\phi \phi_0}\mathbf{e}_1 \mathbf{q}^T  E_{Drr_0} ]R(r).
\label{SOPDE20}
\end{eqnarray}
Integrating with respect to $r$ from $r_0$ to $r$, the second, fourth, and fifth terms in (\ref{SOPDE3}), which contain the parameters $\alpha$ and $\beta$, one gets
\begin{eqnarray} \label{SOPDE23}
&&\int_{r_0}^r \left (\alpha r^4 \frac{\partial^2 u(r,\phi)}{\partial r^2} +(\alpha+ \beta) r^3 \frac{\partial u(r,\phi)}{\partial r} \right )\,dr \notag \\
&=& \alpha r^4\, \frac{\partial u(r,\phi)}{\partial r}-\alpha r_0^4\, \frac{\partial u(r,\phi)}{\partial r}\bigg |_{r=r_0}+(-3\alpha+\beta) \int_{r_0}^r r^3\frac{\partial u(r,\phi)}{\partial r} \,dr\notag \\
&=&\alpha r^4\, \frac{\partial u(r,\phi)}{\partial r}-\alpha r_0^4\, h(\phi)+(-3\alpha+\beta)r^3 u(r,\phi)\notag \\
&&
%\hspace{18mm}
-(-3\alpha+\beta)r_0^3 u(r_0,\phi)- 3(-3\alpha+\beta)\int_{r_0}^r r^2u(r,\phi)\,dr .
\end{eqnarray}
Integrating again with respect to $r$ from $r_0$ to $r$ gives
\begin{equation} \label{SOPDE25}
\int_{r_0}^r \left ( \alpha r^4\, \frac{\partial u(r,\phi)}{\partial r} + (3\alpha-\beta)r_0^3 g(\phi) -\alpha r_0^4\, h(\phi)
 -(3\alpha-\beta)r^3 u(r,\phi) +3(3\alpha-\beta)\int_{r_0}^r r^2u(r,\phi)\right )\,dr.
\end{equation}
The first and fourth terms in (\ref{SOPDE25}) together give
\begin{eqnarray} \label{SOPDE27}
 &&\int_{r_0}^r \left ( \alpha r^4\, \frac{\partial u(r,\phi)}{\partial r} -(3\alpha-\beta)r^3 u(r,\phi) )\right )\,dr \notag \\
&=& \alpha r^4\, u(r,\phi) \bigg |_{r_0}^r-4 \alpha \int_{r_0}^r  r^3\, u(r,\phi)\,dr
-(3\alpha-\beta) \int_{r_0}^r r^3 u(r,\phi)\,dr \notag \\
&=& \alpha r^4\, u(r,\phi)-\alpha r_0^4\, u(r_0,\phi)-(7 \alpha-\beta) \int_{r_0}^r  r^3\, u(r,\phi)\,dr.
\end{eqnarray}
Hence, (\ref{SOPDE25}) takes the form
\begin{eqnarray} \label{SOPDE30}
&& \alpha r^4\, u(r,\phi)-\alpha r_0^4\, g(\phi)-(7 \alpha-\beta) \int_{r_0}^r  r^3\, u(r,\phi)\,dr
+ (3\alpha-\beta)r_0^3  \int_{r_0}^r g(\phi) \,dr \notag \\
&& - \alpha  \int_{r_0}^r  r_0^4\, h(\phi) \,dr
+3(3\alpha-\beta)\int_{r_0}^r \int_{r_0}^r r^2u(r,\phi)\,(dr)^2.
\end{eqnarray}
Again, integrating the above expression twice with respect to $\phi$ from $\phi_0$ to $\phi$,
\begin{alignat}{1} \label{SOPDE32}
%&&
&\alpha \int_{\phi_0}^\phi \int_{\phi_0}^\phi  r^4\, u(r,\phi)\, (d \phi)^2
-\alpha \int_{\phi_0}^\phi \int_{\phi_0}^\phi  r_0^4\, g(\phi)\, (d \phi)^2
%&&
-(7 \alpha-\beta) \int_{\phi_0}^\phi \int_{\phi_0}^\phi  \int_{r_0}^r  r^3\, u(r,\phi)\,dr \, (d \phi)^2 \notag \\
%
%&&
&+ (3\alpha-\beta)\int_{\phi_0}^\phi \int_{\phi_0}^\phi  \int_{r_0}^r r_0^3\, g(\phi) \,dr \, (d \phi)^2 %\notag \\
%&&
- \alpha \int_{\phi_0}^\phi \int_{\phi_0}^\phi  \int_{r_0}^r  r_0^4\, h(\phi) \,dr\, (d \phi)^2 \notag \\
%
%&&
&+3(3\alpha-\beta)\int_{\phi_0}^\phi \int_{\phi_0}^\phi  \int_{r_0}^r \int_{r_0}^r r^2u(r,\phi)\,(dr)^2\, (d \phi)^2.
%\end{eqnarray}
\end{alignat}
Write each term in (\ref{SOPDE32}) in terms of IOM noting that $u(r,\phi)=\Phi(\phi)^T\,U\,R(r)$ and $M_R^{r^4}$ is the matrix representation of $r^4R(r)$ with respect to $R(r)$. The first term in (\ref{SOPDE32}) is
\begin{equation} \label{SOPDE33}
\alpha \int_{\phi_0}^\phi \int_{\phi_0}^\phi  r^4\, u(r,\phi)\, (d \phi)^2
=\alpha \int_{\phi_0}^\phi \int_{\phi_0}^\phi  \Phi^T(\phi) U M_R^{r^4} R(r)  \,(d\phi)^2
=\alpha \Phi^T(\phi)E^T_{D\phi \phi_0}U M_R^{r^4}  R(r) .
\end{equation}
The second term in (\ref{SOPDE32}) is
\begin{equation} \label{SOPDE34}
-\alpha \int_{\phi_0}^\phi \int_{\phi_0}^\phi  r_0^4\, g(\phi)\, (d \phi)^2
=-\alpha \int_{\phi_0}^\phi \int_{\phi_0}^\phi  \Phi^T(\phi) \mathbf{g} M_R^{r_0^4} R(r)  \,(d\phi)^2
=-\alpha \Phi^T(\phi)E^T_{D\phi \phi_0} \mathbf{g} M_{r_0^4}^T R(r) .
\end{equation}
The third term in (\ref{SOPDE32}) is
\begin{eqnarray} \label{SOPDE36}
-(7 \alpha-\beta) \int_{\phi_0}^\phi \int_{\phi_0}^\phi  \int_{r_0}^r  r^3\, u(r,\phi)\,dr \, (d \phi)^2
&=& -(7 \alpha-\beta) \int_{\phi_0}^\phi \int_{\phi_0}^\phi  \int_{r_0}^r  \Phi^T(\phi) U(r^3R(r)) \, dr \,(d\phi)^2\notag \\
&=&-(7 \alpha-\beta) \Phi^T(\phi)E^T_{D\phi \phi_0} U E_{rr_0}^{r^3}R(r) .
\end{eqnarray}
The fourth term in (\ref{SOPDE32}) is
\begin{eqnarray} \label{SOPDE38}
(3\alpha-\beta)\int_{\phi_0}^\phi \int_{\phi_0}^\phi  \int_{r_0}^r r_0^3\, g(\phi) \,dr \, (d \phi)^2
&=&(3\alpha-\beta)\int_{\phi_0}^\phi \int_{\phi_0}^\phi \int_{r_0}^r \Phi^T(\phi) \mathbf{g} M_{r_0^3} R(r) \,dr\,(d\phi)^2\notag \\
&=& (3\alpha-\beta) \Phi^T(\phi) E^T_{D\phi \phi_0} \mathbf{g}M_{r_0^3} E_{rr_0}R(r) .
\end{eqnarray}
The fifth term in (\ref{SOPDE32}) is
\begin{eqnarray} \label{SOPDE40}
- \alpha \int_{\phi_0}^\phi \int_{\phi_0}^\phi  \int_{r_0}^r  r_0^4\, h(\phi) \,dr\, (d \phi)^2
&=& -\alpha \int_{\phi_0}^\phi \int_{\phi_0}^\phi \int_{r_0}^r \Phi^T(\phi) \mathbf{h}M_{r_0^4}R(r) \, dr \,(d\phi)^2 \notag \\
&=&-\alpha \Phi^T(\phi)E^T_{D\phi \phi_0} \mathbf{h}M_{r_0^4} E_{rr_0} R(r) .
\end{eqnarray}
The sixth term in (\ref{SOPDE32}) is
\begin{align} \label{SOPDE42}
3(3\alpha-\beta)\int_{\phi_0}^\phi \int_{\phi_0}^\phi  \int_{r_0}^r \int_{r_0}^r r^2u(r,\phi)\,(dr)^2\, (d \phi)^2
&= 3(3\alpha-\beta)\int_{\phi_0}^\phi \int_{\phi_0}^\phi \int_{r_0}^r \int_{r_0}^r \Phi^T(\phi) U\, r^2R(r)\, (dr)^2\,(d\phi)^2 \notag \\
&=3(3\alpha-\beta)\Phi^T(\phi) E^T_{D\phi \phi_0} UE_{Drr_0}^{r^2}  R(r) .
\end{align}
Combining the terms (\ref{SOPDE33})-(\ref{SOPDE42}), the term (\ref{SOPDE32}) takes the form,  in terms of IOMs,
\begin{align} \label{SOPDE44}
& \Phi^T(\phi)[\alpha E^T_{D\phi \phi_0}U M_R^{r^4} -\alpha E^T_{D\phi \phi_0} \mathbf{g} M_{r_0^4}^T %\notag \\
-(7 \alpha-\beta) E^T_{D\phi \phi_0} U E_{rr_0}^{r^3} +(3\alpha-\beta)  E^T_{D\phi \phi_0} \mathbf{g}M_{r_0^3} E_{rr_0} \notag \\
& \hspace{50mm} - \alpha E^T_{D\phi \phi_0} \mathbf{h}M_{r_0^4} E_{rr_0} + (3\alpha-\beta)(\phi) E^T_{D\phi \phi_0} UE_{Drr_0}^{r^2} ] R(r) .
\end{align}
The last two terms in (\ref{SOPDE3}) in terms of integration operational matrices after integrating twice with respect to $\phi$ from $\phi_0$ to $\phi$ and again twice with respect to $r$ from $r_0$ to $r$, are
\begin{eqnarray} \label{SOPDE46}
\gamma\int_{\phi_0}^\phi \int_{\phi_0}^\phi  \int_{r_0}^r  \int_{r_0}^rr^2  u(r,\phi)  \,(dr)^2\, (d \phi)^2
&=& \gamma\int_{\phi_0}^\phi \int_{\phi_0}^\phi  \int_{r_0}^r  \int_{r_0}^r \Phi^T(\phi) U (r^2\, R(r))  \,(dr)^2\, (d \phi)^2 \notag \\
&=& \gamma \Phi^T(\phi) E^T_{D\phi \phi_0} U E_{Drr_0}^{r^2}  R(r),
\end{eqnarray}
\begin{eqnarray}
\int_{\phi_0}^\phi \int_{\phi_0}^\phi  \int_{r_0}^r \int_{r_0}^r r^2 f(r,\phi) \,(dr)^2\, (d \phi)^2
&=& \int_{\phi_0}^\phi \int_{\phi_0}^\phi  \int_{r_0}^r \int_{r_0}^r \Phi^T(\phi) F (r^2\,R(r)) \,(dr)^2\, (d \phi)^2\notag \\
&=& \Phi^T(\phi) E^T_{D\phi \phi_0} F E_{Drr_0}^{r^2}  R(r) .  \label{SOPDE48}
\end{eqnarray}
Combining the terms (\ref{SOPDE20}), (\ref{SOPDE44}),  (\ref{SOPDE46}), and (\ref{SOPDE48}),  the equation (\ref{SOPDE2}) finally takes the form
\begin{align} \label{SOPDE54}
&\Phi^T(\phi) [
 E^T_{D\phi \phi_0} UM_R^{r^2}
- E^T_{D\phi \phi_0}  \mathbf{g}M_{r_0^2}^T
-3 E^T_{D\phi \phi_0} U\,E_{rr_0}^r  - E^T_{D\phi \phi_0}  \mathbf{h} (M_{r_0^2})^T E_{rr_0} \notag \\
&
 + E^T_{D\phi \phi_0} \mathbf{g}(M_{r_0} )^TE_{rr_0}
+ E^T_{D\phi \phi_0} U E_{Drr_0}
+I_{m}U  E_{Drr_0} -\mathbf{e}_1 \mathbf{p}^T  E_{Drr_0}  -E^T_{\phi \phi_0}\mathbf{e}_1 \mathbf{q}^T  E_{Drr_0} +\alpha E^T_{D\phi \phi_0}U M_R^{r^4}\notag \\
& -\alpha E^T_{D\phi \phi_0} \mathbf{g} M_{r_0^4}^T
-(7 \alpha-\beta) E^T_{D\phi \phi_0} U E_{rr_0}^{r^3} +(3\alpha-\beta)  E^T_{D\phi \phi_0} \mathbf{g}M_{r_0^3} E_{rr_0}
-\alpha E^T_{D\phi \phi_0} \mathbf{h}M_{r_0^4} E_{rr_0} \notag \\
&  %\hspace{30mm}
 + (3\alpha-\beta) E^T_{D\phi \phi_0} UE_{Drr_0}^{r^2}
+\gamma  E^T_{D\phi \phi_0} U E_{Drr_0}^{r^2}]  R(r)
=\Phi^T(\phi) [E^T_{D\phi \phi_0} F E_{Drr_0}^{r^2}]R(r).
\end{align}
Based on (\ref{SOPDE54}), define matrices $A$ and $Y$ as,
\begin{eqnarray}
A &=& (M_R^{r^2})^T \otimes E^T_{D\phi \phi_0} -3 (E_{rr_0}^r)^T \otimes E^T_{D\phi \phi_0}  + (E_{Drr_0})^T \otimes E^T_{D\phi \phi_0} +(E_{Drr_0})^T \otimes I_{m}  +  \alpha (M_R^{r^4})^T \otimes E^T_{D\phi \phi_0} \nonumber \\
&&  -(7 \alpha-\beta) (E_{rr_0}^{r^3})^T \otimes E^T_{D\phi \phi_0} + (3\alpha-\beta) (E_{Drr_0}^{r^2} )^T \otimes E^T_{D\phi \phi_0} +\gamma  (E_{Drr_0}^{r^2})^T E^T_{D\phi \phi_0}, \label{SOPDE60A}\\
%\end{eqnarray*}
%%
%\begin{eqnarray*}
 Y &=& E^T_{D\phi \phi_0} F E_{Drr_0}^{r^2}+E^T_{D\phi \phi_0}  \mathbf{g}M_{r_0^2}^T +E^T_{D\phi \phi_0}  \mathbf{h} (M_{r_0^2})^T E_{rr_0} -E^T_{D\phi \phi_0} \mathbf{g}(M_{r_0} )^TE_{rr_0}  +\mathbf{e}_1 \mathbf{p}^T  E_{Drr_0} \nonumber \\
&&
+E^T_{\phi \phi_0}\mathbf{e}_1 \mathbf{q}^T  E_{Drr_0}
+\alpha E^T_{D\phi \phi_0} \mathbf{g} M_{r_0^4}^T
-(3\alpha-\beta)  E^T_{D\phi \phi_0} \mathbf{g}M_{r_0^3} E_{rr_0}
+\alpha E^T_{D\phi \phi_0} \mathbf{h}M_{r_0^4} E_{rr_0} \label{SOPDE61A}.
\end{eqnarray}
For $Y$ given in (\ref{SOPDE61A}), define the vector $\mathbf{b}=\text{vec}(Y).$ Let $\mathbf{x}=\text{vec}(U)$ be the vector that is unknown. Using the matrix $A$ in (\ref{SOPDE60A}), (\ref{SOPDE54}) can be written as
\begin{equation} \label{EQ:LinearSystem}
A\mathbf{x}=\mathbf{b}.
\end{equation}
Therefore, the solution of the PDE in (\ref{SOPDE1}) can be found by solving a system of linear equations given by (\ref{EQ:LinearSystem}) in which $A$ is a sparse matrix of order $MN.$ The solution $\mathbf{x}$, an $MN \times 1$ matrix, is then reshaped as an $M\times N$ matrix $U$ which gives
$\widetilde{u}(r,\phi)=\Phi^T(\phi)UR(r)$, an approximate solution of (\ref{SOPDE1}).

\bigskip

\noindent
For the Laplace equation, $\alpha=\beta=\gamma=0,$ and also $F=0.$ In this case (\ref{SOPDE60A}) and (\ref{SOPDE61A}) simplify to, respectively,
\begin{align}
%\hspace{10mm}
A &= (M_R^{r^2})^T \otimes E^T_{D\phi \phi_0} -3 (E_{rr_0}^r)^T \otimes E^T_{D\phi \phi_0}  + (E_{Drr_0})^T \otimes E^T_{D\phi \phi_0} +(E_{Drr_0})^T \otimes I_{m}, \label{SOPDE60} \\
 Y &= E^T_{D\phi \phi_0}  \mathbf{g}M_{r_0^2}^T +E^T_{D\phi \phi_0}  \mathbf{h} (M_{r_0^2})^T E_{rr_0}
-E^T_{D\phi \phi_0} \mathbf{g}(M_{r_0} )^TE_{rr_0} +\mathbf{e}_1 \mathbf{p}^T  E_{Drr_0} %\notag \\
%&&\hspace{40mm}
+E^T_{\phi \phi_0}\mathbf{e}_1 \mathbf{q}^T  E_{Drr_0}. \label{SOPDE61}
\end{align}
%
%Define, $\mathbf{x}=\text{Vec}(U)$ and $\mathbf{b}=\text{Vec}(Y)$ so that (\ref{SOPDE54}) can be written as
%\begin{equation} \label{EQ:LinearSystem}
%A\mathbf{x}=\mathbf{b},
%\end{equation}
%where $\mathbf{x}$ is unknown. Therefore, the solution of the PDE in (\ref{SOPDE1}) turns out to be a solution of the linear simultaneous equation in (\ref{EQ:LinearSystem}) in which $A$ is a sparse matrix of order $MN$ with the coefficients obtained from (\ref{SOPDE60}).
%
%
%
To demonstrate the % efficacy
accuracy of this method, in Example~\ref{EX:SOPDE} below, the corresponding linear equation obtained in (\ref{EQ:LinearSystem}) is solved in two ways . In one the standard matrix pseudo-inverse of $A$ is used, in which case $\mathbf{x}=A^\dagger\mathbf{b},$ and in the other an $l_1$-optimization algorithm is used. Both have been implemented using Matlab.
%
%
%%%%%%%%%%%%%%%%%%%%%%%%%%%%%%%%%%%%%%%%%%%%%%%%%%%
%\section{Numerical Solution of a Second Order PDE}
%%%%%%%%%%%%%%%%%%%%%%%%%%%%%%%%%%%%%%%%%%%%%%%%%%%
\begin{example}[Numerical Solution of a Second Order PDE]
\label{EX:SOPDE}
\rm
Consider the second order PDE,
\begin{eqnarray}
r^2{\partial^2u \over \partial r^2}+r{\partial u \over \partial r}+{\partial^2u \over \partial \phi^2}=0,\label{SOPDE56}
\end{eqnarray}
 in a circular region of radius $a$ subject to the boundary conditions (BCs)
\begin{eqnarray*}
g(\phi)&:=&u(r_0,\phi)=\left \{
\begin{array}{ll}
V_0, & 0 <\phi <\pi;\\
0,   & \pi <\phi <2\pi;  \end{array}   \right .\label{SOPDE58} \\
h(\phi) &=& {\partial u(r,\phi) \over \partial r} \bigg |_{r=a}% \notag \\
=\frac{V_0}{\pi a \sin \phi}; \\
p(r) &=& u(r,\phi_0) = V_0 \left \{\frac 12+\frac 1\pi \tan^{-1} \frac{2ar \sin \phi_0}{(a^2-r^2)} \right \};\notag \\
q(r) &=& {\partial u(r,\phi) \over \partial \phi}\bigg |_{\phi=\phi_0}=
\frac{V_0}{\pi}\frac{2ar\cos \phi_0 (a^2-r^2)}{(a^2-r^2)^2+4a^2r^2\sin^2\phi_0}.
\end{eqnarray*}
This is a Laplace equation in polar coordinates and %is encountered
appears in determining the potential distribution in a horizontal cylindrical region with axial symmetry when the upper half is maintained at a potential $V_0$ and the lower half at zero potential.
Without any loss of generality, assume that $V_0=1$, $r_0=a=1$, and $\phi_0=0$. So, the BCs in terms of the
%Zernike circle polynomials
Zernike polynomials are,
\begin{eqnarray*}
g(\phi) &=& u(r_0,\phi) ={1 \over 2}+{2 \over \pi}\sum_{i=0}^\infty {\sin (2i+1)\phi \over (2i+1)}%\notag \\
%&=&
=\mathbf{g}^T\Phi(\phi), \notag \\
h(\phi) &=& {\partial u(r,\phi) \over \partial r} \bigg |_{r=r_0}=\frac{2}{\pi}\sum_{i=0}^\infty \sin (2i+1)\phi %\notag \\
%&=&
=\mathbf{h}^T\Phi(\phi),\notag \\
p(r) &=& u(r,\phi_0)=\tfrac{1}{2} =\mathbf{p}^TR(r), \notag \\
q(r) &=& {\partial u(r,\phi) \over \partial \phi}\bigg |_{\phi=\phi_0}=\frac{2r}{\pi(1-r^2)}=\mathbf{q}^TR(r), \notag
\end{eqnarray*}
where $\mathbf{p}= [\tfrac{1}{2}\;0\; 0\; 0\;0\;0 \cdots ]^T,$ $\mathbf{q}= \tfrac 2\pi [ 0\;1 \; 0\;0\;0\;1 \cdots]^T$,
$\mathbf{g}=\tfrac 2\pi [\pi/4\; 0\;1 \; 0\;0\;0\;1/3 \cdots ]^T$, and $\mathbf{h}=\tfrac 2\pi [0\; 0\;1 \; 0\;0\;0\;1 \cdots ]^T$.
\begin{table}[!t]
\caption{Density of the sparse matrix $A$}\label{table_sparsity}
\begin{center}
\begin{tabular}{|c|c|c|c|}
\hline
\text{Order of Zernike pol.} & \text{Order of } A & \text{Non-zero elements in } A & \text{Sparsity of} A\\
$M \times N$ & $MN \times MN$ & $x$ & $x/M^2N^2$\\
\hline
$7 \times 6$ &  $42 \times 42$ & 502 & 0.2846 \\
\hline
$9 \times 9$ & $81 \times 81$ & 1275 &  0.1943 \\
\hline
$11 \times 12$ &  $132 \times 132$  & 2680  & 0.1538 \\
\hline
$13 \times 16$ & $208 \times 208$ & 5102 & 0.1179 \\
\hline
$15 \times 20$ & $300 \times 300$ & 8745 & 0.0972 \\
\hline
$17 \times 25$ & $425 \times 425$ & 14306 & 0.0792 \\
\hline
$19 \times 30$ & $570 \times 570$ & 22009 & 0.0677 \\
\hline
$21 \times 36$ & $756 \times 756$ &33048 & 0.0578 \\
\hline
\end{tabular}
\end{center}
\end{table}
With the above BCs, the solution of the Laplace equation (\ref{SOPDE56}) is obtained numerically using (\ref{SOPDE60}) and (\ref{SOPDE61}) for values (7,6), (9,9), (11,12), (13,16), (15,20), (17,25), (19,30) and (21,36) of the pair $(M,N)$. The Zernike polynomial based solution is compared with the exact solution and the errors are listed in Table~\ref{tab:SOPSE_error} for $l_1$ as well as $l_2$ methods, the $l_1$-errors being much lower. It appears from Figure~\ref{fig:SOfigure5} that the error decreases exponentially with the increase of ZRP degrees. The solution surfaces and their contour lines for only three values (7,6), (15,20), and (21,36) of  $(M,N)$ are shown in Figures~\ref{fig:SOfigure3}-\ref{fig:SOfigure4} for the Zernike polynomial based solutions, along with the solution by the product method, and the exact Poisson integral solution from where one can compare the different solutions. The eight distinct points on the error curve in Figure~\ref{fig:SOfigure5} correspond to the values of $(M,N)$ as mentioned above. It appears that among the approximate solutions, the $l_1$ method is better, and the accuracy improves with higher order terms. The proposed method compares favorably with the exact solution particularly if $(M,N)=(21,36)$. To justify the use of an $l_1$-algorithm for sparse solutions, the sparsity, i.e. the density of the sparse matrix $A$ defined as the number of non-zero elements in $A$ is computed, and these densities corresponding to the order of the %Zernike circle polynomials
Zernike polynomials used are shown in Table~\ref{table_sparsity}. We reiterate that if the higher order terms are not approximated by projecting on the space spanned by the lower order terms, the solution in this second order case is nowhere near the true solution.
\end{example}
\begin{figure}
\centering
\includegraphics[width=.4\textwidth]{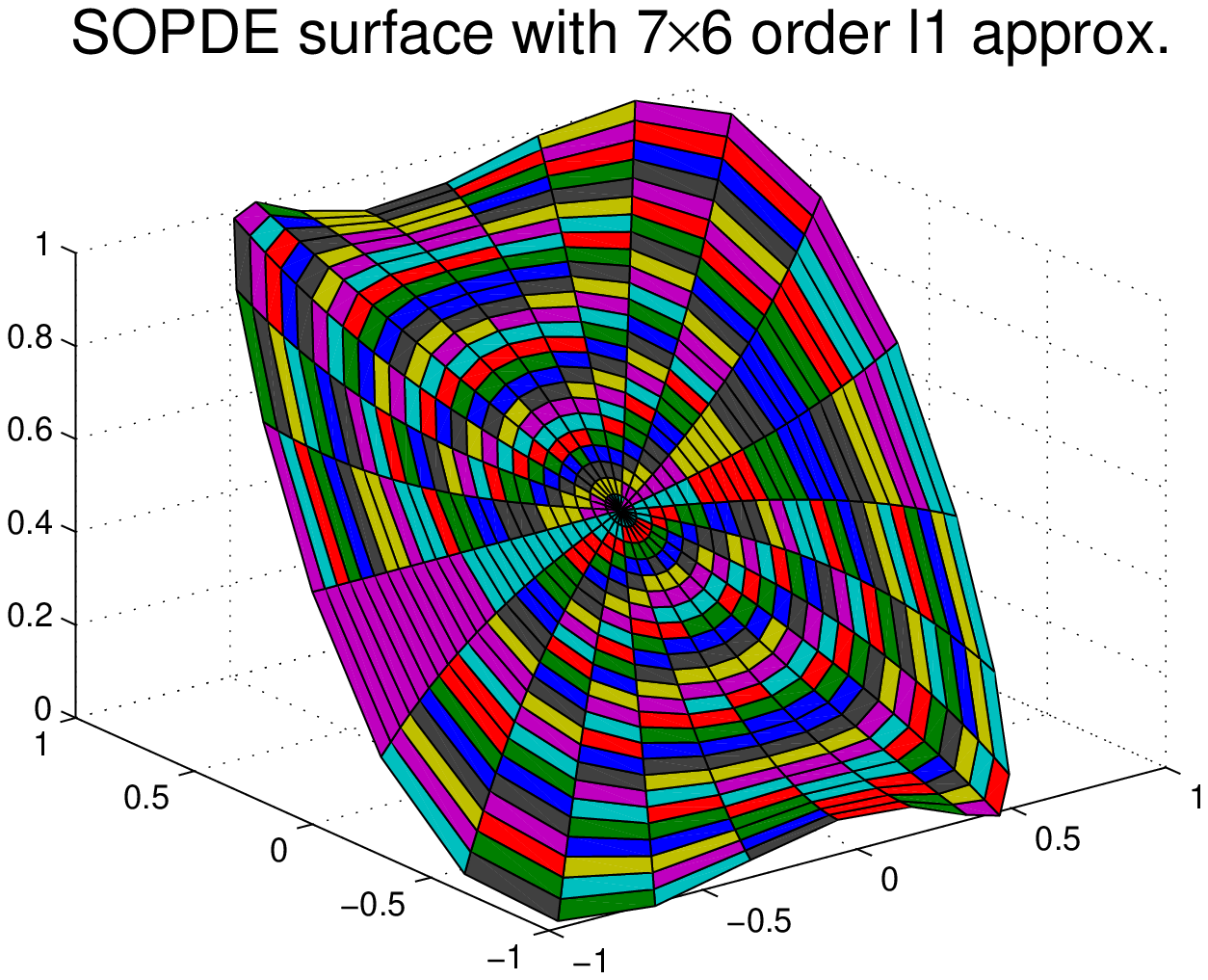}\hfill
\includegraphics[width=.4\textwidth]{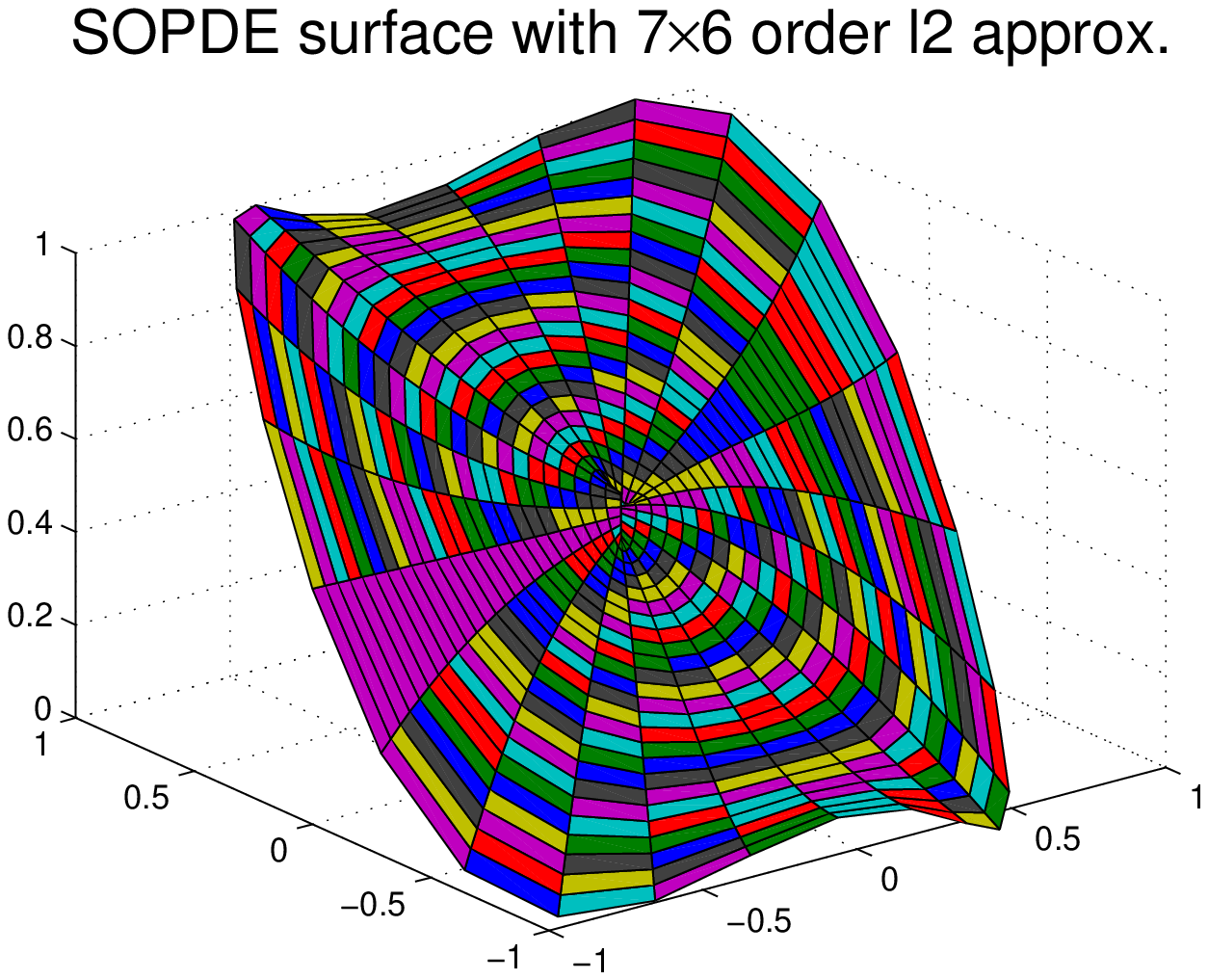}\\
\includegraphics[width=.4\textwidth]{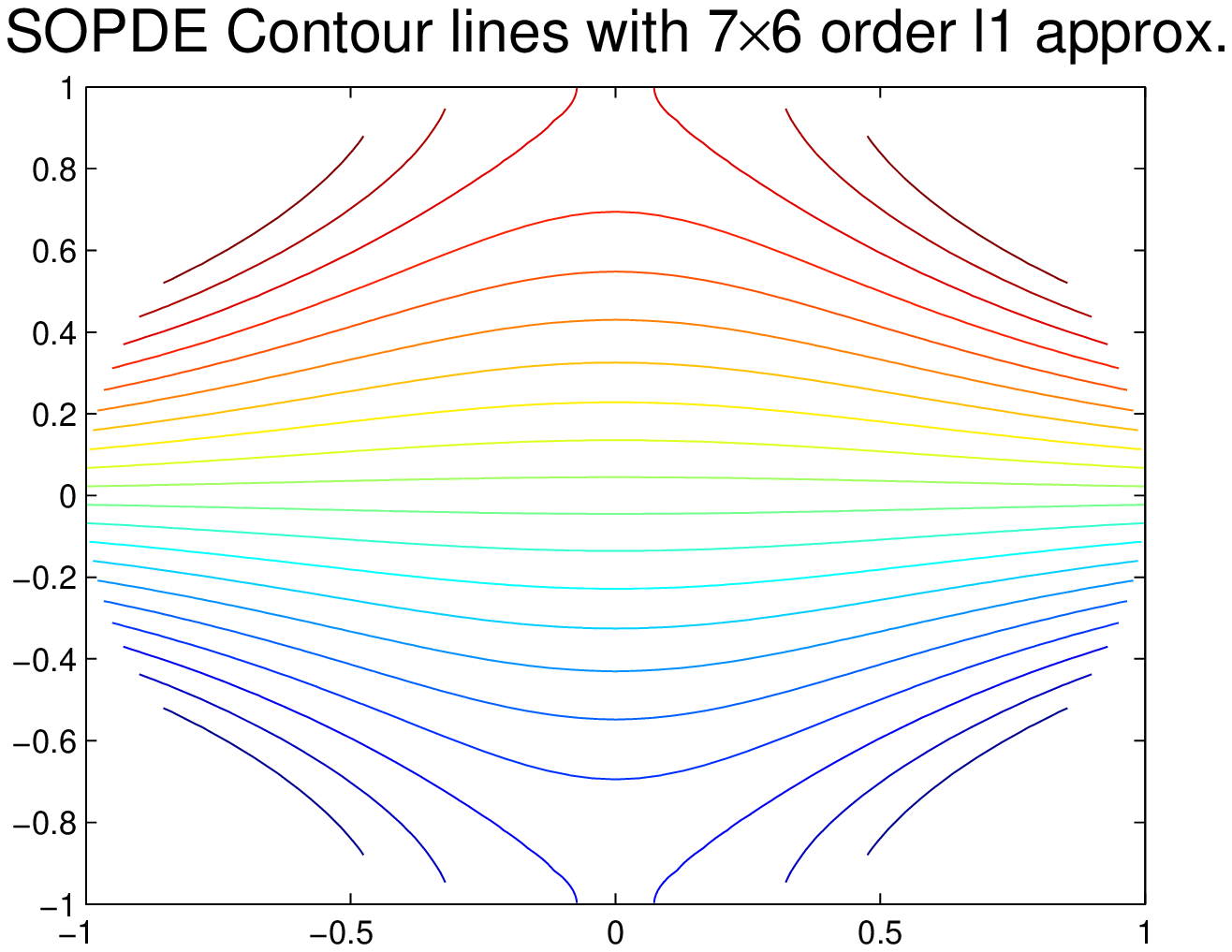}\hfill
\includegraphics[width=.4\textwidth]{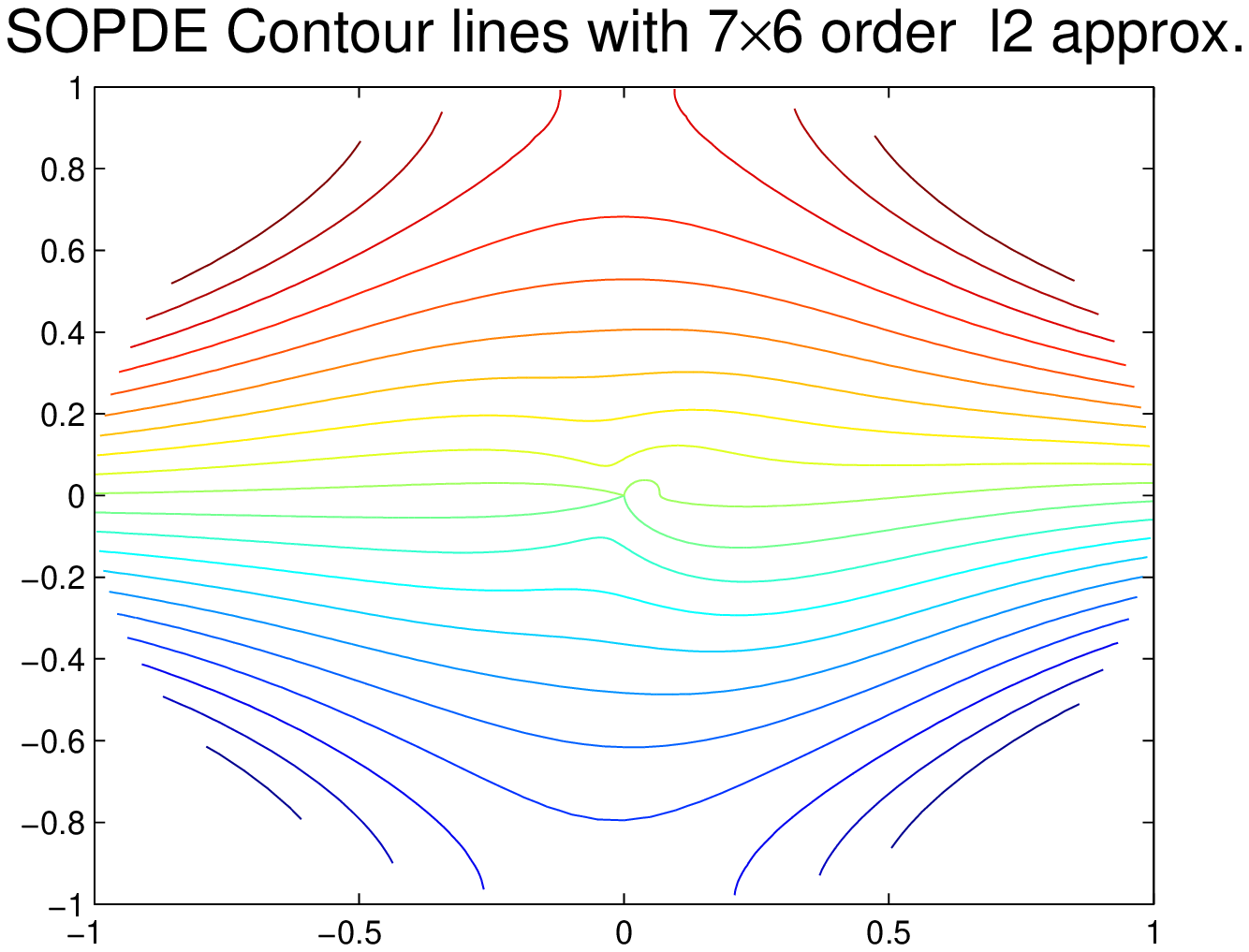}\\
\centering
\includegraphics[width=.4\textwidth]{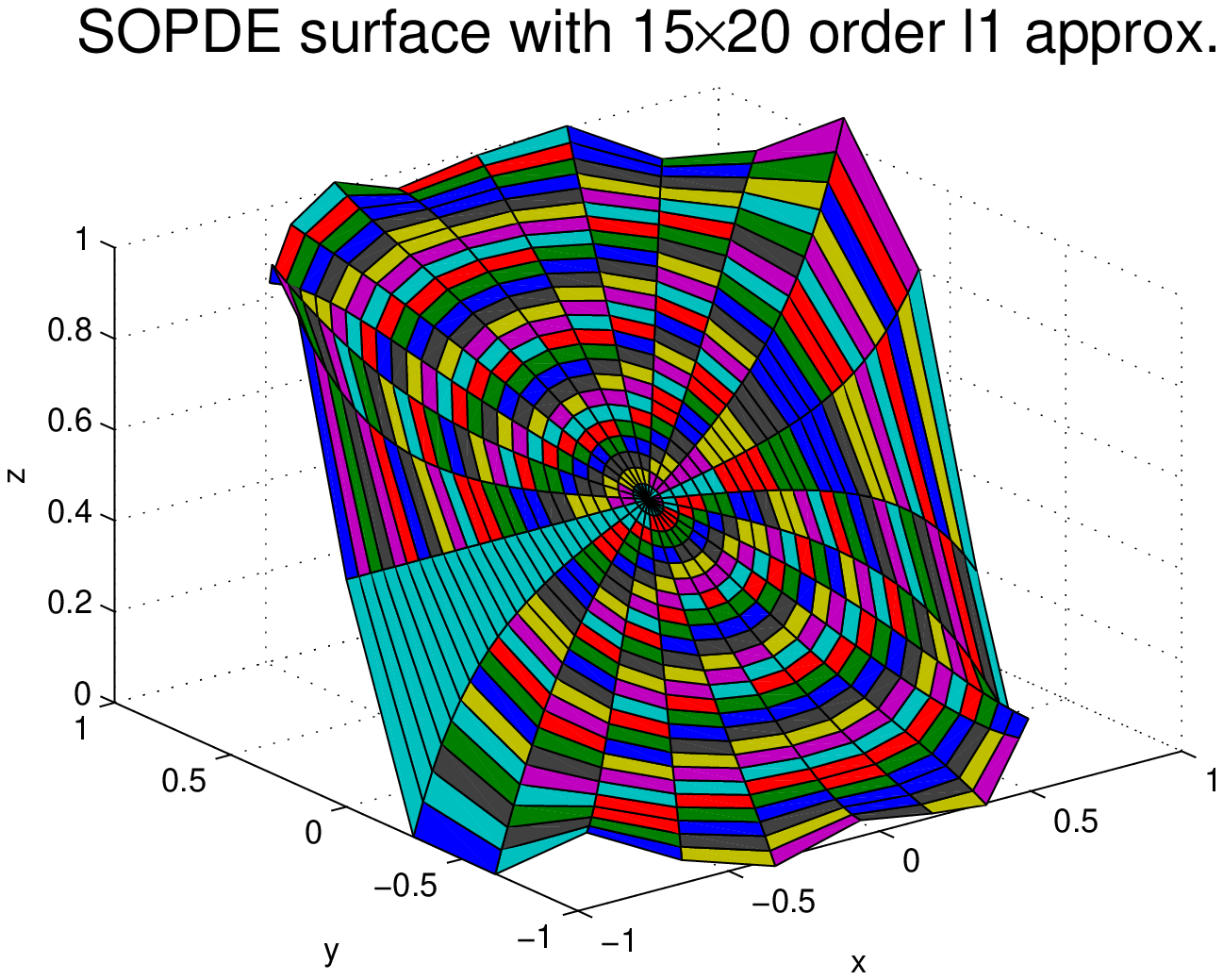}\hfill
\includegraphics[width=.4\textwidth]{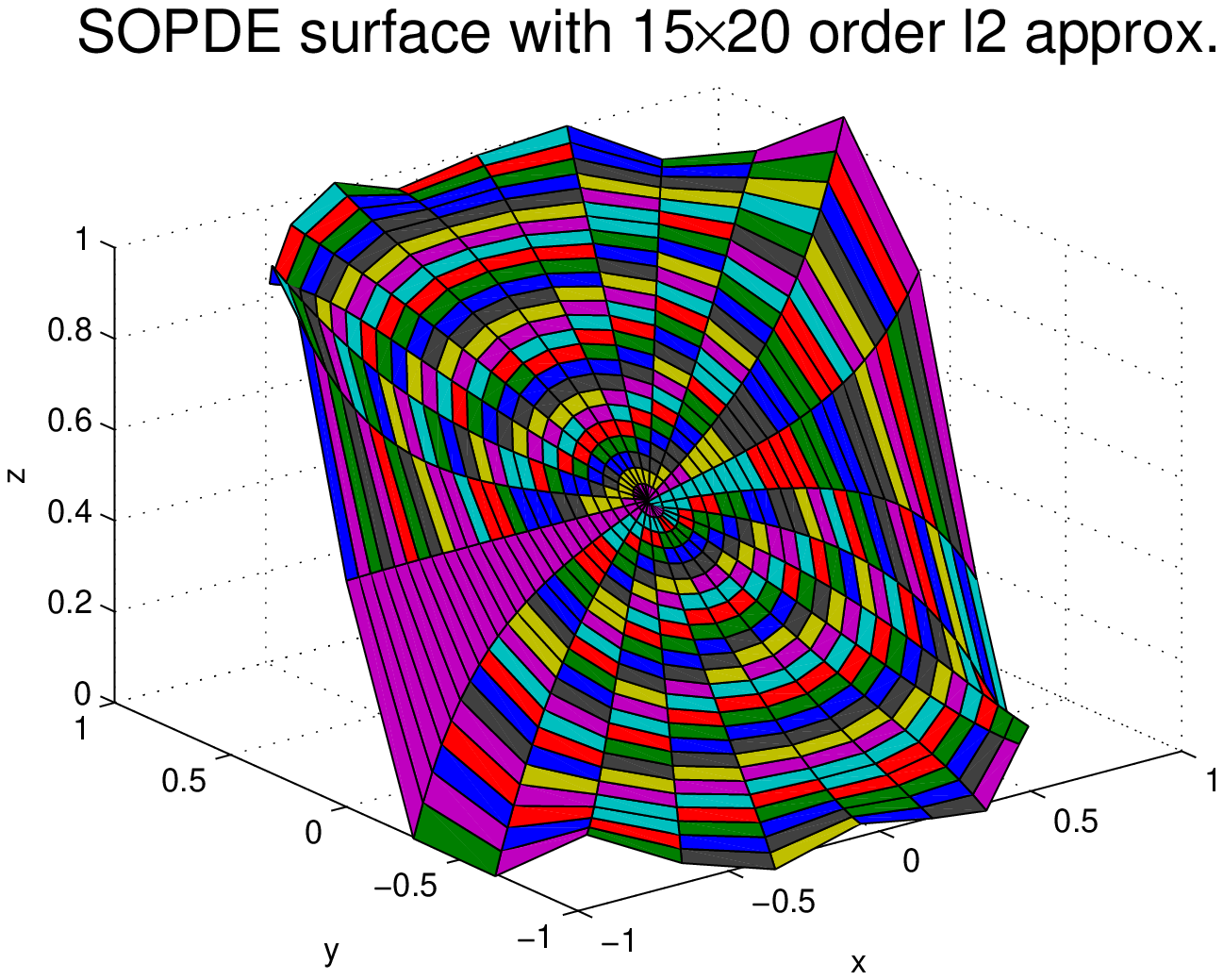}\\
\includegraphics[width=.4\textwidth]{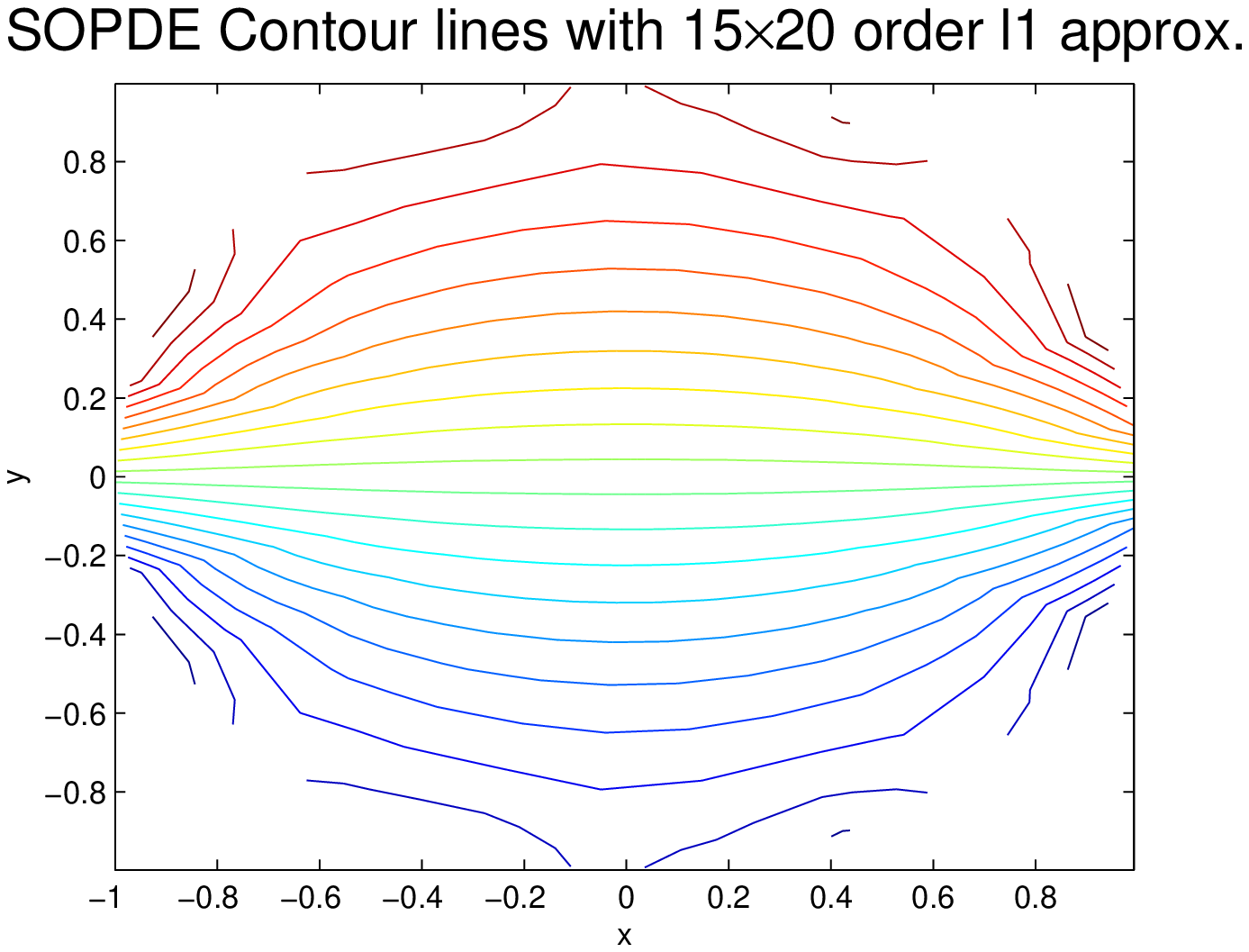}\hfill
\includegraphics[width=.4\textwidth]{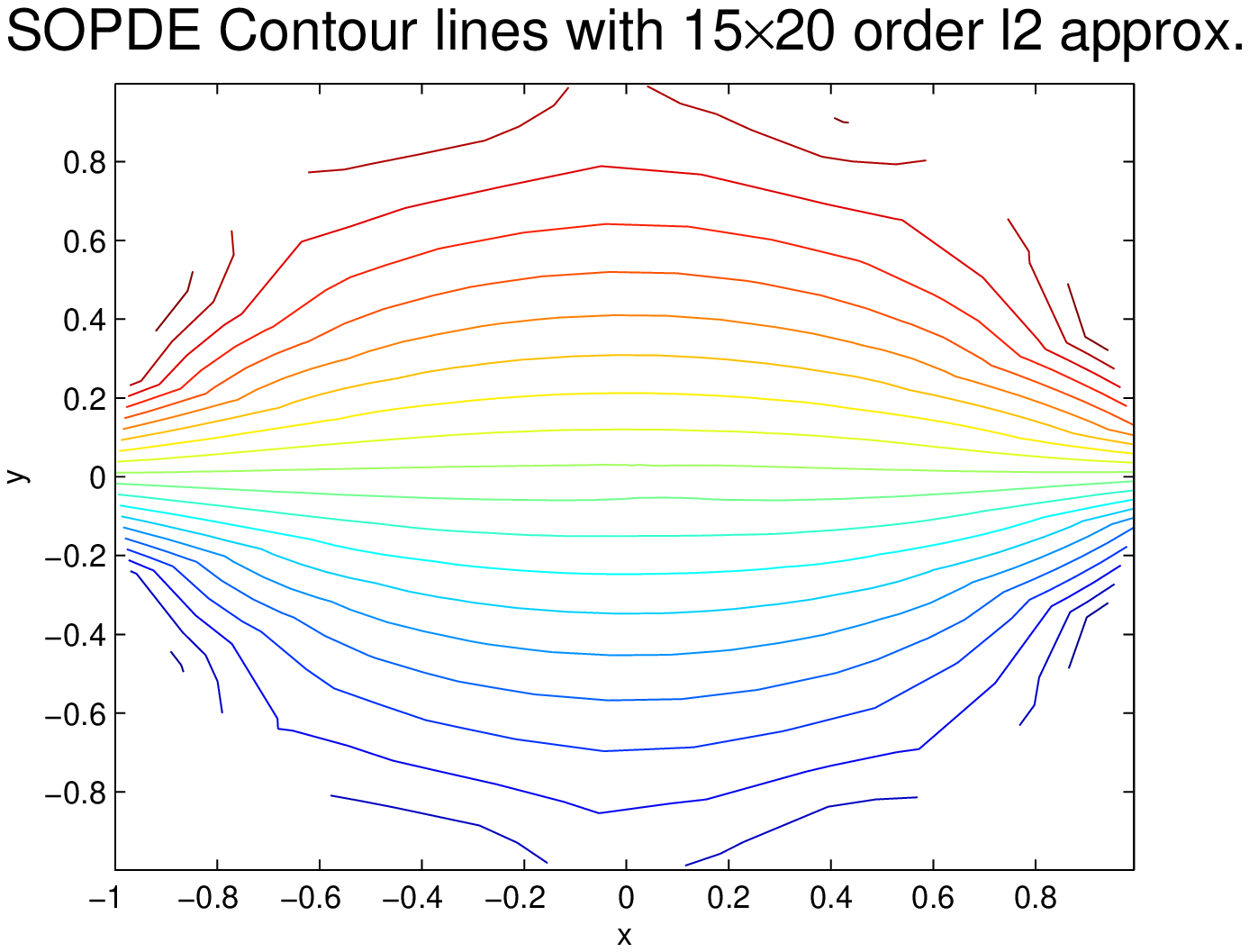}
\caption{SOPDE Solution: Solution surface and contour lines.}
\label{fig:SOfigure3}
\end{figure}
\begin{figure}
\centering
\includegraphics[width=.4\textwidth]{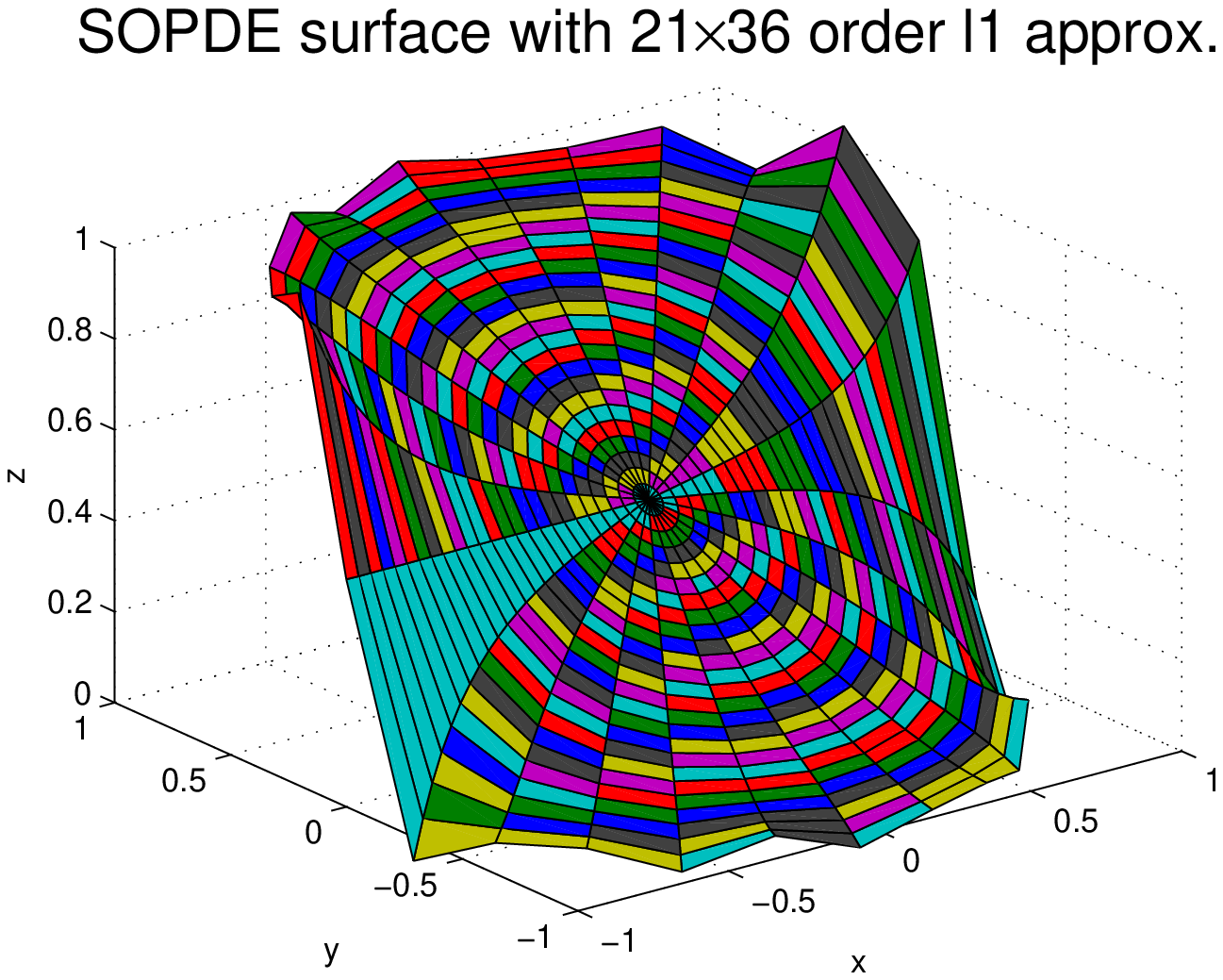}\hfill
\includegraphics[width=.4\textwidth]{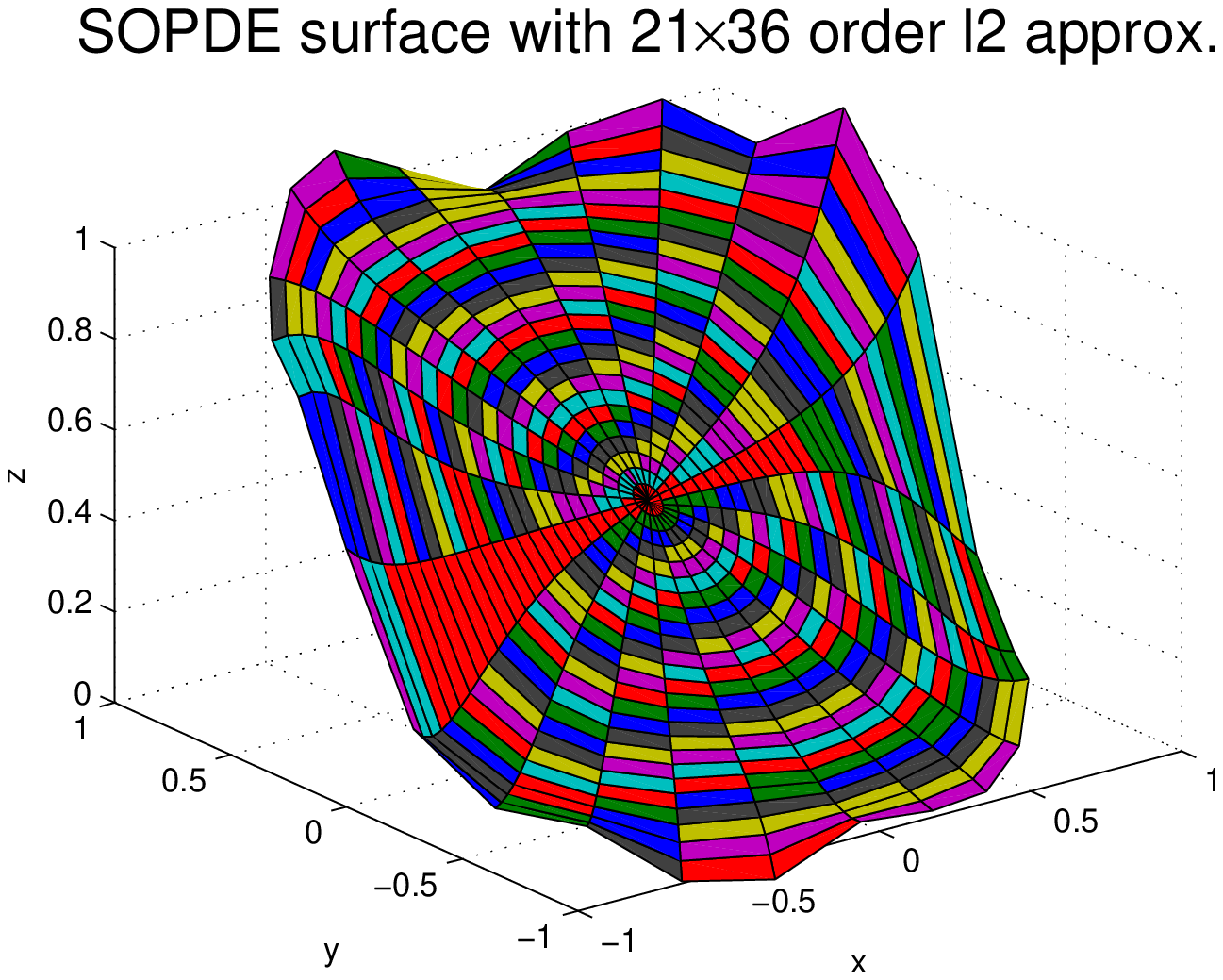}\\
\includegraphics[width=.4\textwidth]{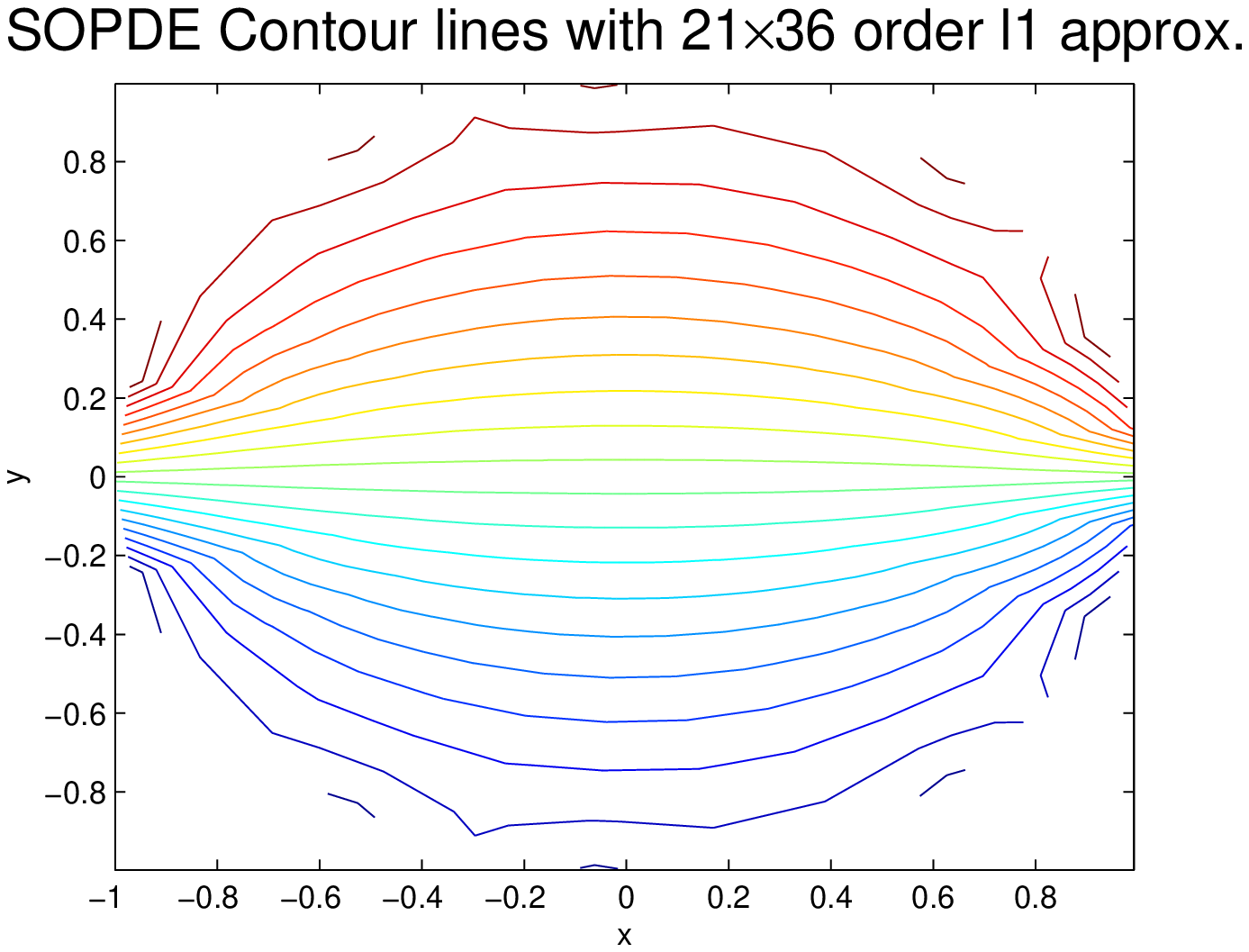}\hfill
\includegraphics[width=.4\textwidth]{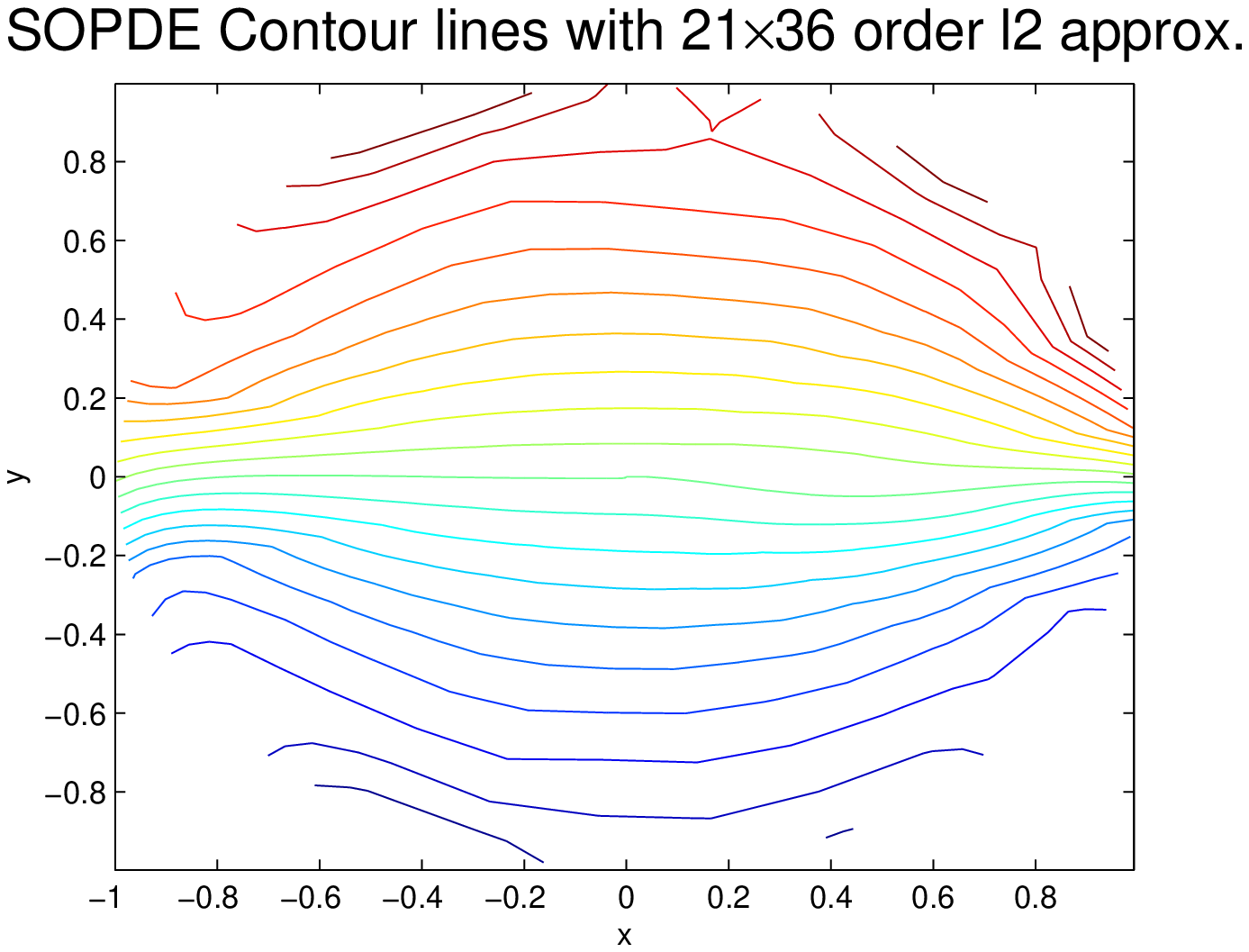}
\centering
\includegraphics[width=.4\textwidth]{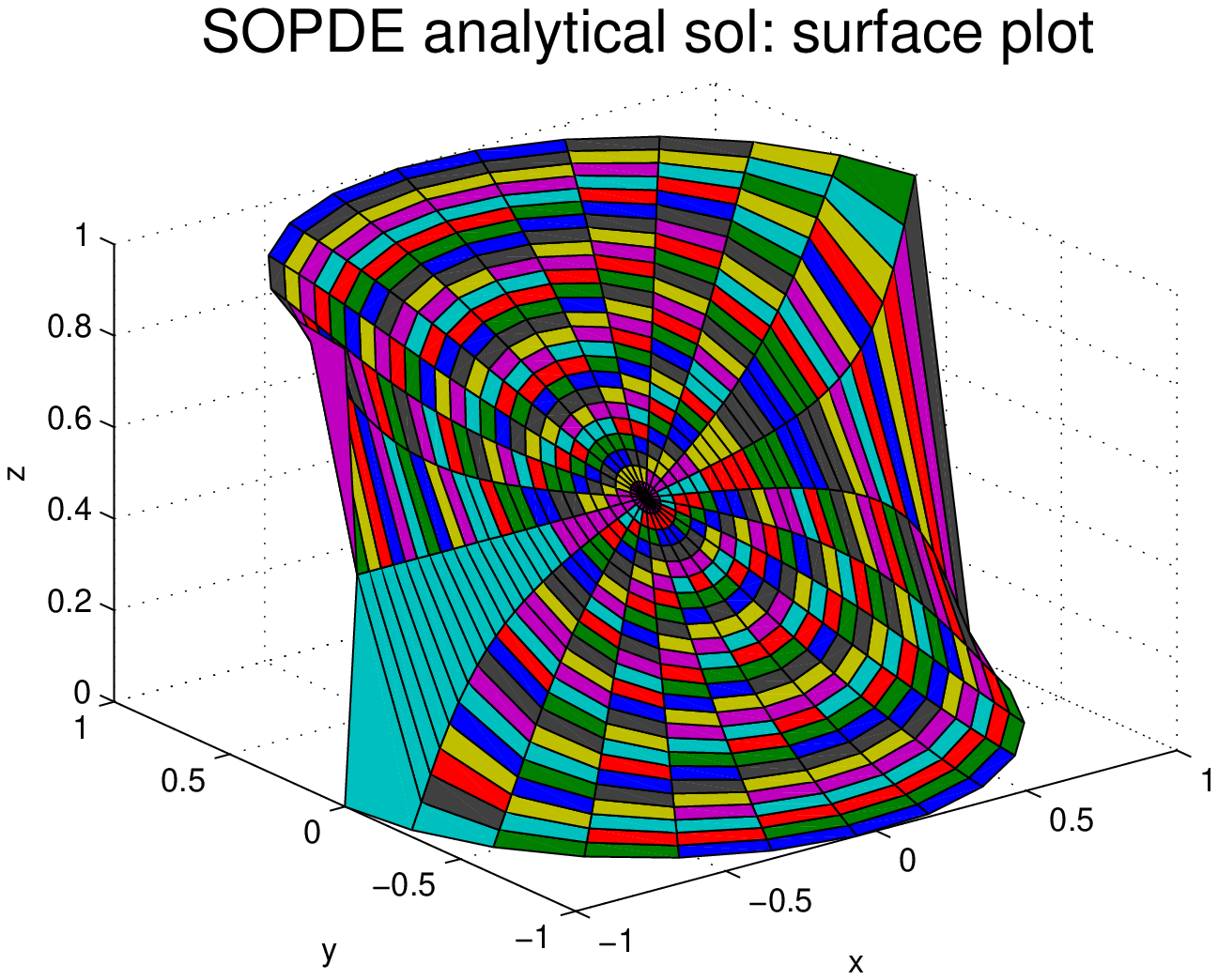}\hfill
\includegraphics[width=.4\textwidth]{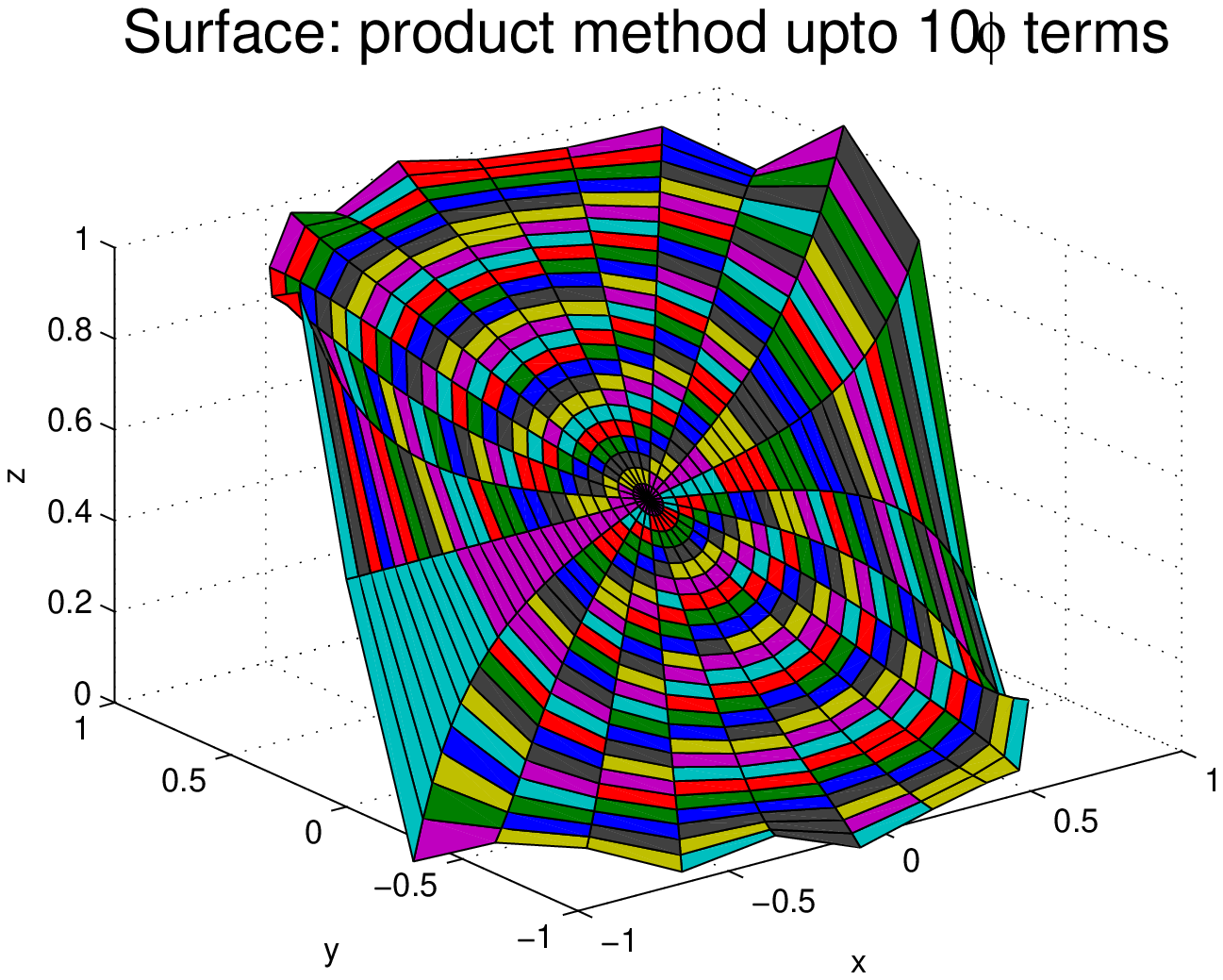}\\
\includegraphics[width=.4\textwidth]{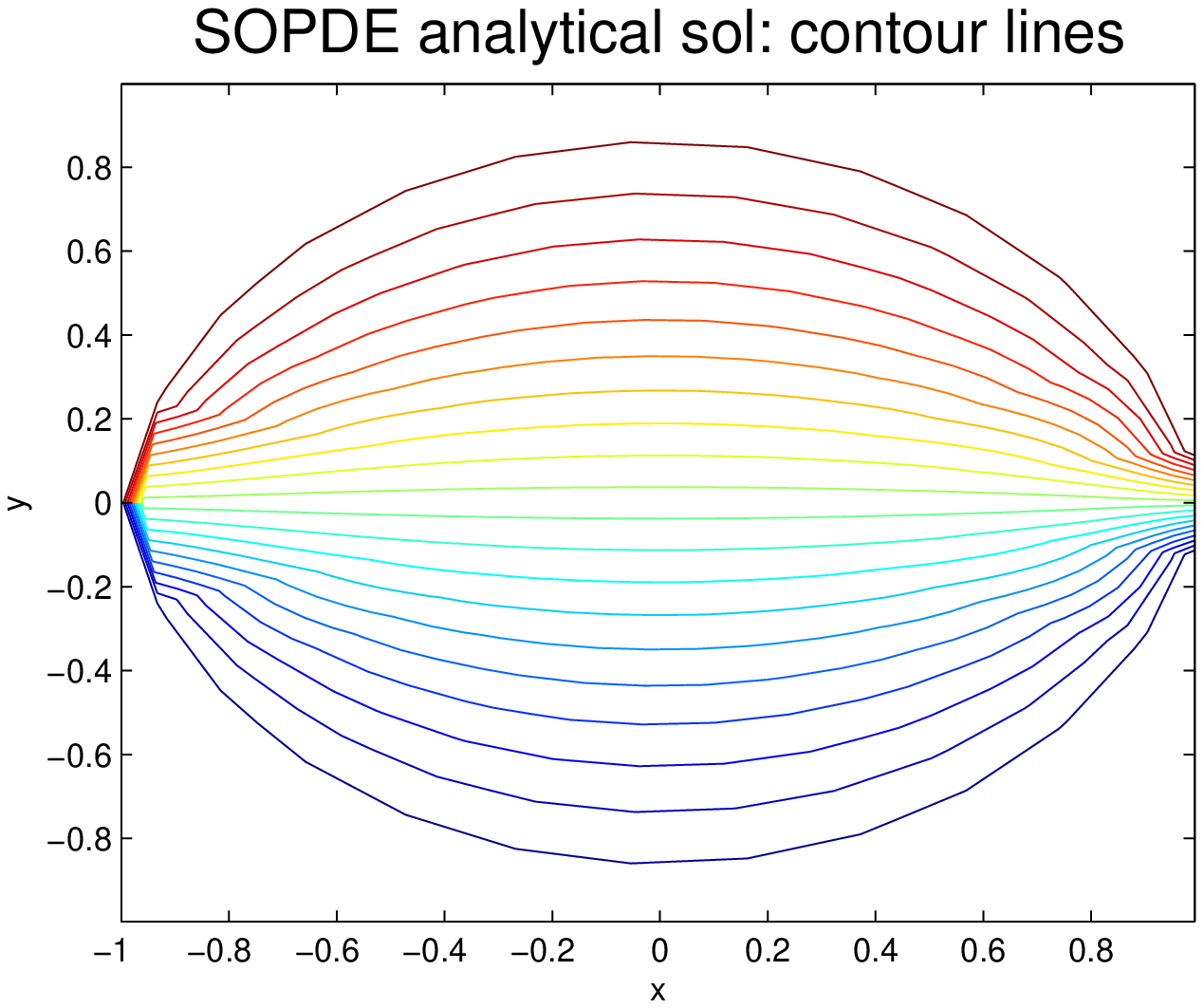}\hfill
\includegraphics[width=.4\textwidth]{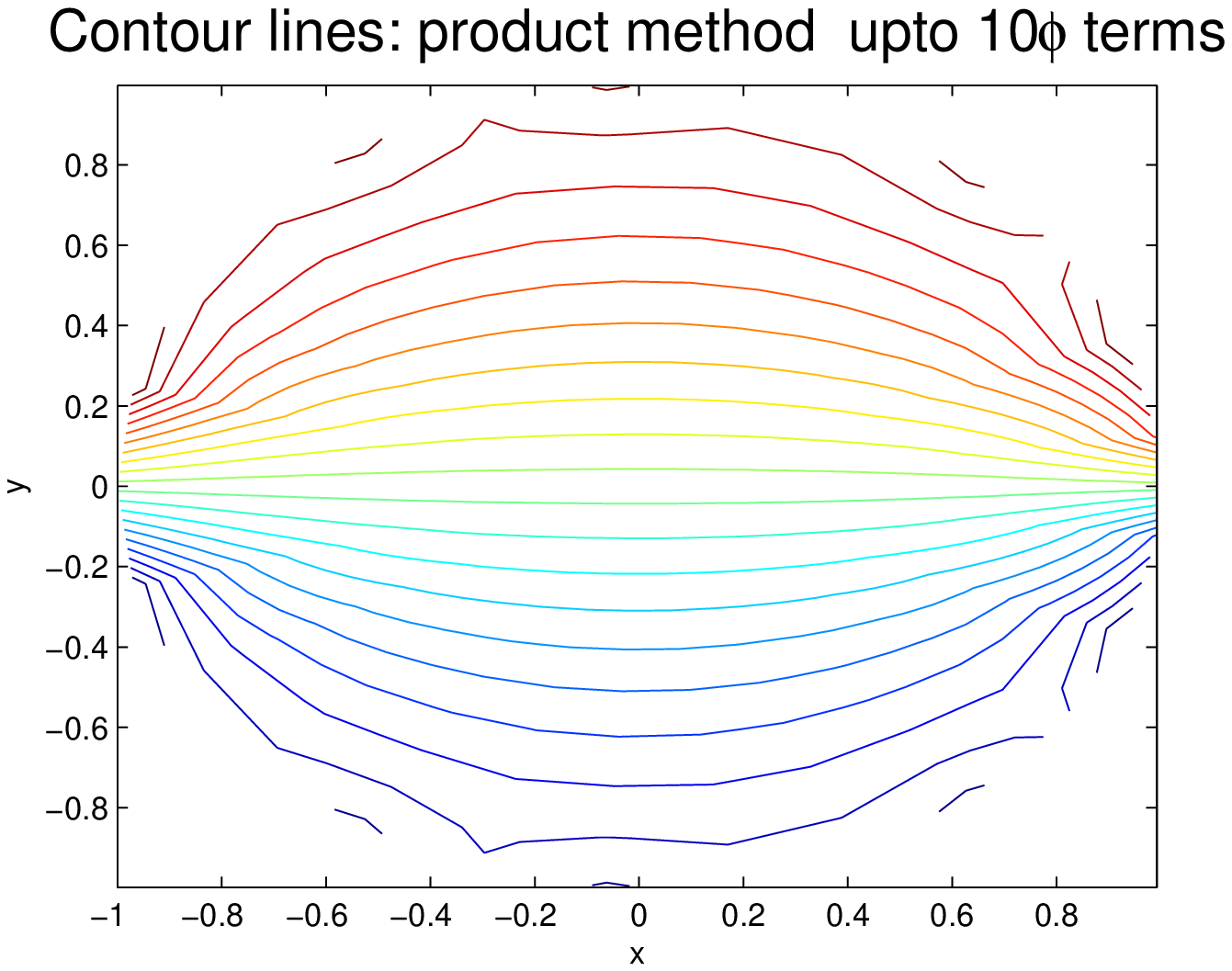}
\caption{SOPDE Solution: Solution surface and contour lines.}
\label{fig:SOfigure4}
\end{figure}
\begin{table}
%\caption{Errors in SOPDE in $10^{-4}$ scale}\label{SOPDE_error}
\caption{Errors in solving SOPDE}\label{SOPDE_error}
\vspace*{3mm}
\begin{center}
{\scriptsize
\begin{tabular}{|r|r|r|r|r|r|r|r|r|} \hline \label{tab:SOPSE_error}
Order of Zernike pol.& 7 $\times$ 6 & 9 $\times$ 9 & 11 $\times$ 12 & 13 $\times$ 16 & 15 $\times$ 20 & 17 $\times$ 25 & 19 $\times$ 30 & 21 $\times$ 36 \\
\hline
$\ell_2$-error (order $10^{-4}$) & 17.1369 &15.9970 &11.6488 & 7.3285 &4.5292 &7.4972& 5.0259&8.3604  \\
\hline
$\ell_1$-error (order $10^{-4}$) & 14.9311& 14.9311& 7.3285&7.3285 &4.4431& 4.4431&3.0503  & 3.0499
\\
\hline
\end{tabular}}
\end{center}
\end{table}
\paragraph{\textbf{Computational complexity}}
Recall that $n$ denotes the maximum degree of radial polynomials used. To compute the Zernike coefficients of the forcing function $f,$ the cost of computation is $O(n^3),$ see \cite{Boyd:2011}. In (\ref{eq:TensorRep}) and (\ref{SOPDE54}), the number of operations mainly comes from calculating the tensor product which is $O(n^4).$ However, it is important to note that the matrices obtained from the tensor products in (\ref{eq:TensorRep}) or after (\ref{SOPDE54}) have to be computed just once and thereafter the same matrices can be used to solve different problems with other boundary conditions. The linear system $A\mathbf{x} = \mathbf{b},$ where $A$ is a sparse matrix, can be efficiently solved using an $l_1$-minimization algorithm.
%
%%%%%%%%%%%%%%%%%%%%%%%%%%%%%%%%%%%%%%%%%%%%%%%%%%%%%%%%%%
\begin{figure}
\centering
\includegraphics[width=2.5in]{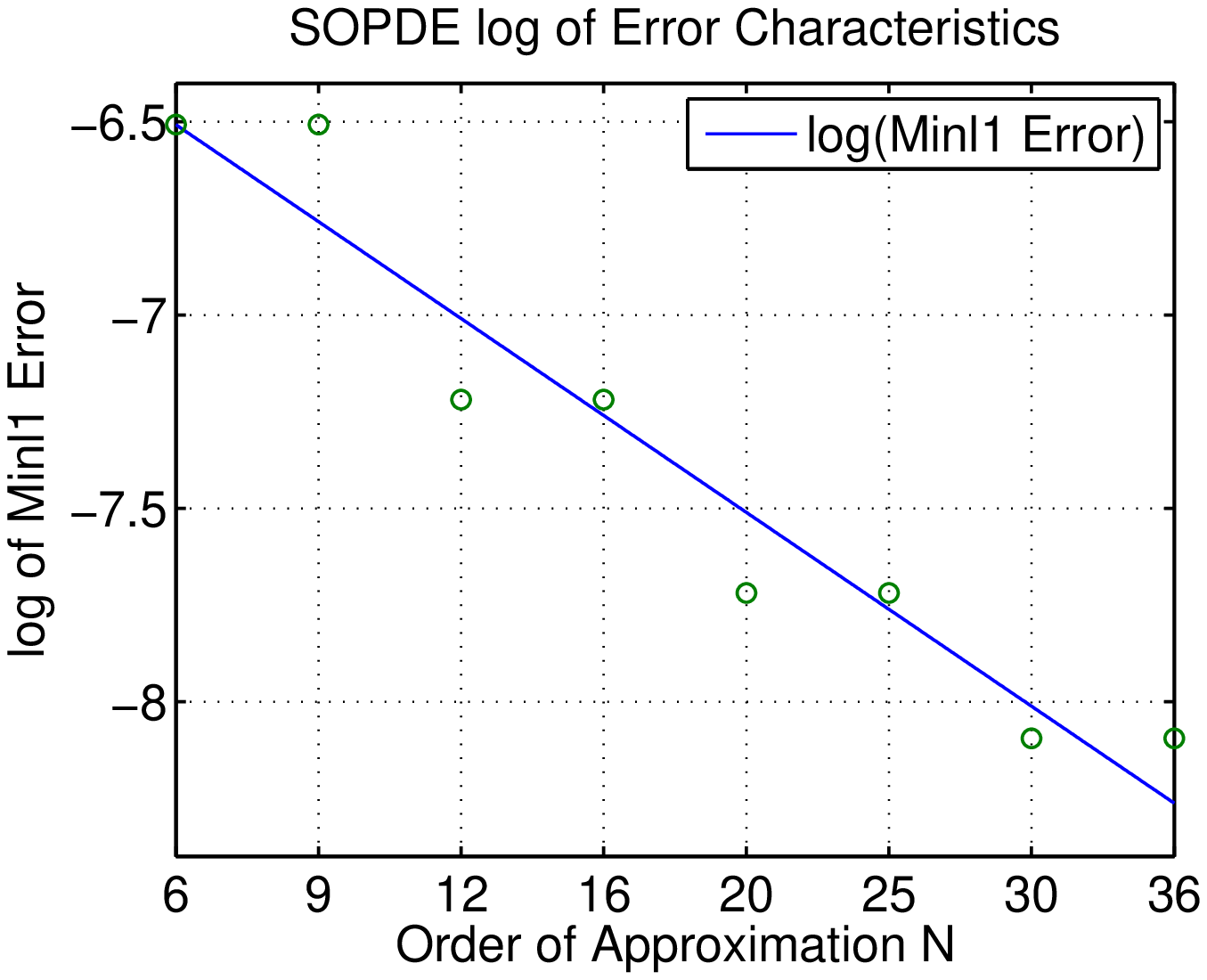}
\caption{SOPDE Solution: log of error curve.}
\label{fig:SOfigure5}
\end{figure}
\section{Rate of decay of Zernike coefficients}
\label{sec:Convergence}
%%%%%%%%%%%%%%%%%%%%%%%%%%%%%%%%%%%%%%%%%%%%%%%%%%%%%%%%%%
Recall from Section  \ref{sec:Intro} that the coefficients in a Zernike expansion as given in (\ref{RepresentationZernikePoly}) are % given by
	$$A_{nm} = \frac{\epsilon_m(n+1)}{\pi} \int_0^1 \int_0^{2\pi} f(r, \phi) \cos m \phi R_n^m(r) r d \phi dr,$$
	$$B_{nm} = \frac{\epsilon_m(n+1)}{\pi} \int_0^1 \int_0^{2\pi} f(r, \phi) \sin m \phi R_n^m(r) r d \phi dr.$$
%$\epsilon_m$ is the Neumann factor, see \cite{Prata:1989}:
%
%	$$\epsilon_m = \left\{
%	\begin{array}{cc}
%	1 & \textrm{if} \ m = 0, \\
%	2 & \textrm{otherwise.}
%	\end{array}
%	\right.
%	$$
The rate of decay of the $A_{mn}s$ and $B_{mn}$s will be calculated here for certain types of functions. These are given in
Theorem~\ref{THM:RateDecayCk} and Theorem~\ref{Thm:Decay} below.

Define the inner product on the unit disk as
	$$\langle f, g \rangle = \int_0^1 \int_0^{2\pi} f(r, \phi) \overline{g}(r, \phi) \ r \ dr \ d \phi.$$
% This gives rise to the following norm
The associated norm is
	$$\|f\| = \sqrt{\int_0^1 \int_0^{2\pi} |f(r, \phi)|^2 \ r \ dr \ d \phi } \ .$$
Define
	$$
	\left\{
	\begin{array}{c}
	^oU_n^m \\
	^eU_n^m
	\end{array}
	\right\}
	:=
	\left\{
	R_n^m(r)
	\begin{array}{c}
	\sin m \phi \\
	\cos m \phi
	\end{array}
	\right\}.
	$$
	As given in \cite{Nijboer:1942} and \cite{Born:1999}
	        $$\int_0^1 R_n^m(r) R_{n'}^m = \frac{\delta_{n n'}}{2n+2}$$
		%$$\int^1_0 R^m_n(r) R^m_{n'}(r)rdr = \frac{\delta_{mm'}}{2n+2}$$
	which means
		$$\|R_n^m\|^2 = \frac{1}{2n+2}.$$
	As discussed in  % cite{Nijboer:1942} and
\cite{Nijboer:1942, Born:1999},
	the set $
	\left\{
	\begin{array}{c}
	^oU_n^m \\
	^eU_n^m
	\end{array}
	\right\}$ forms
	% an orthogonal basis for the inside of the unit circle.
	an orthogonal basis for the space of square integrable functions in the unit disk. Since
	$\int_0^{2\pi} d\phi =2\pi,$ $\int_0^{2\pi} \sin^2 m\phi \ d\phi = \pi,$ and $\int_0^{2\pi} \cos^2 m\phi \ d\phi = \pi,$ one
	can normalize to get the following orthonormal basis for the space of square integrable functions in the unit disk
	$$
	\mathcal{B} : = \left\{ \frac{\sqrt{2n+2}}{\sqrt{\pi}} R_{n}^m (r) \begin{array}{c}
	\sin m \phi \\
	\cos m \phi
	\end{array}\right\}_{\substack{n = 0, 1, 2, \ldots \\ 1\leq m \leq n \\ n - m \ \textrm{even}}} \bigcup \left\{ \frac{\sqrt{2n+2}}{\sqrt{2 \pi}}R_n^0(r)\right\}_{n = 0, 2, 4, \ldots}
	$$
which can be rewritten more compactly as
	$$
	\mathcal{B} : = \frac{\sqrt{\epsilon_m (n+1)}}{\sqrt{\pi}}R_n^m(r) \left\{ \begin{array}{c}
	\sin m \phi \\
	\cos m \phi
	\end{array} \right\} =: \left\{
	\begin{array}{c}
	^o\underbar{U}_n^m \\
	^e\underbar{U}_n^m
	\end{array}
	\right\}_{\substack{n = 0, 1, 2, \ldots \\ 0 \leq m \leq n \\ n - m \ \textrm{even}}}.
	$$
% where
% 	$$
% 	\epsilon_m = \left\{
% 	\begin{array}{cc}
% 	1 & \textrm{if} \ m = 0 \\
% 	2 & \textrm{if} \ m \neq 0.
% 	\end{array}
% 	\right.
% 	$$
Any $f(r, \phi)$ defined on the unit disk can be expanded as
	\begin{equation} \label{EQ:fRepZernike}
	f(r, \phi) = \sum_{n=0}^{\infty} \sum_{\substack{0 \leq m \leq n \\ n - m \ \textrm{even}}}\left[
	\langle f(r,\phi), ^e\underbar{U}_n^m \rangle ^e\underbar{U}_n^m + \langle f(r,\phi), ^o\underbar{U}_n^m \rangle ^o\underbar{U}_n^m
	\right].
	\end{equation}
On comparing (\ref{EQ:fRepZernike}) with (\ref{RepresentationZernikePoly}) and (\ref{EQ:ZernikeCoeff}), note that
	$$
	A_{nm} = \langle f(r, \phi), ^e\underbar{U}_n^m \rangle, \ B_{nm} = \langle f(r, \phi), ^o\underbar{U}_n^m \rangle
	$$
are the coefficients of $f$ with respect to the Zernike polynomial basis. They will be referred to as the Zernike coefficients of $f.$
Define the $N$th partial sum as
	$$
	S_N f (r, \phi) := \sum_{n=0}^{N} \sum_{\substack{0 \leq m \leq n \\ n - m \ \textrm{even}}}\left[
	\langle f(r,\phi), ^e\underbar{U}_n^m \rangle ^e\underbar{U}_n^m + \langle f(r,\phi), ^o\underbar{U}_n^m \rangle ^o\underbar{U}_n^m
	\right].
	$$
Let $P_N$ be the space spanned by %Zernike circle polynomials
Zernike polynomials of radial degree at most $N,$ i.e.,
	%$$
%	P_N = \left\{ p(r, \phi) \ : \ p(r, \phi) = \sum_{n = 0}^N \sum_{0\leq m \leq n}\left[^ec_{mn} \ ^e\underbar{U}_n^m + ^oc_{mn} \ ^o\underbar{U}_n^m\right]\right\}
%	$$
    $$
	P_N = \left\{ p(r, \phi) \ : \ p(r, \phi) = \sum_{n = 0}^N \sum_{\substack{0 \leq m \leq n \\ n - m \ \textrm{even}}} \left[^ec_{mn} \ ^e\underbar{U}_n^m + ^oc_{mn} \ ^o\underbar{U}_n^m\right]\right\} .
	$$
Then $S_N f$ is a polynomial in $P_N.$ Since $\mathcal{B}$ forms an orthonormal basis, by property of orthonormal sets, $S_N f$ is %the polynomial closest to
the polynomial of best approximation to
$f$ among all polynomials in $P_N$ which means that for any polynomial $p \in P_N$ one has
	$$\|f - p\| \geq \|f - S_Nf\|$$
with equality holding if and only of $p = S_Nf.$ By virtue of $\mathcal{B}$ being an orthonormal basis, Parseval's Identity also holds
%	
	%\begin{equation} \label{Parseval_Id}
\begin{eqnarray*}
	\|f\|^2 = \sum_{n=0}^{\infty} \sum_{\substack{0 \leq m \leq n \\ n - m \ \textrm{even}}}\left( \left|\langle f(r,\phi), ^e\underbar{U}_n^m \rangle \right|^2 + \left|\langle f(r,\phi), ^o\underbar{U}_n^m \rangle \right|^2\right)
	\end{eqnarray*}
which in turn gives the Riemann-Lebesgue Lemma in this case:
	\begin{equation}\label{RiemannLebesgue}
%\begin{eqnarray*}
	\lim_{m,n \to \infty} \left| \langle f(r,\phi), ^e\underbar{U}_n^m \rangle \right| = \lim_{m,n \to \infty} \left| \langle f(r,\phi), ^o\underbar{U}_n^m \rangle \right|  = 0.
%	\end{eqnarray*}
    \end{equation}
In certain cases, the rate of decay of the coefficients in (\ref{RiemannLebesgue}) can be obtained as follows.

Denote by $\mathcal{C}^k(B(0,1))$ the space of functions defined on the unit disk $B(0,1)$ whose $k$th order partial derivatives all exist and are continuous on $B(0,1)$. For convenience, we shall sometimes write $C^k.$ Suppose that $u(r, \phi) \in \mathcal{C}^2.$
Then
\begin{eqnarray}
C_{nm} &=& \frac{\epsilon_m (n+1)}{\pi} \int_0^1 \int_0^{2\pi} u(r, \phi) e^{i m \phi} R^m_n(r)  r \ d\phi dr \nonumber \\
&=&  \frac{\epsilon_m (n+1)}{\pi} \int_0^1 \left[ \frac{u(r, \phi)e^{i m \phi}}{im} \Big|_{0}^{2\pi} - \frac{1}{im }\int_0^{2\pi} \frac{\partial u (r, \phi)}{\partial \phi}e^{i m \phi} d\phi \right] R^m_n(r)  r \  dr \nonumber \\
&=& - \frac{1}{im } \frac{\epsilon_m (n+1)}{\pi} \int_0^1 \int_0^{2\pi} \frac{\partial u (r, \phi)}{\partial \phi}e^{i m \phi} R^m_n(r)  r  \   d\phi dr \quad (\textrm{since $u(r, 2\pi) = u(r, 0)$}) \nonumber \\
&=& - \frac{1}{im } \frac{\epsilon_m (n+1)}{\pi} \left(- \frac{1}{im }\right) \int_0^1 \int_0^{2\pi} \frac{\partial^2 u (r, \phi)}{\partial \phi^2}  e^{i m \phi} R^m_n(r)  r  \   d\phi dr \label{EQ:RateDecayC2}
\end{eqnarray}
where in the last step we have used the fact that $\frac{\partial u }{\partial \phi}(r, 0) = \frac{\partial u}{\partial \phi}(r,2\pi)$. The integral on the right side of (\ref{EQ:RateDecayC2}) will yield the Zernike coefficients of $\frac{\partial^2 u (r, \phi)}{\partial \phi^2}.$ Denoting this by $C^{''}_{mn},$ one can write
$$
C_{mn} = \frac{\epsilon_m (n+1)}{\pi} \left(- \frac{1}{im }\right)^2 C^{''}_{mn}
$$
which decays at the rate of $m^{-2}.$ Note that $m$ and $n$ tend to infinity at the same rate. In general, one has the following Theorem~\ref{THM:RateDecayCk}.
\begin{theorem} \label{THM:RateDecayCk}
Let $u(r, \phi) \in \mathcal{C}^k.$ Then the Zernike coefficients $A_{nm},$ $B_{nm}$  of $u$ decay at the rate of $m^{-k}.$
\end{theorem}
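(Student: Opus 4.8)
The plan is to iterate the integration-by-parts computation carried out just above for the case $k=2$. I would work throughout with the complex Zernike coefficients
$$
C_{nm} = \frac{\epsilon_m(n+1)}{\pi}\int_0^1\int_0^{2\pi} u(r,\phi)\,e^{im\phi}R_n^m(r)\,r\,d\phi\,dr,
$$
so that $A_{nm}=\operatorname{Re}C_{nm}$ and $B_{nm}=\operatorname{Im}C_{nm}$ when $u$ is real-valued, applying the result to the real and imaginary parts of $u$ in general (alternatively, one may argue directly with $\cos m\phi$ and $\sin m\phi$ in place of $e^{im\phi}$, which is entirely parallel). The first point to record is that, because $u$ is a genuine single-valued function on $B(0,1)$, in polar coordinates $u$ and each of its $\phi$-derivatives $\partial^j u/\partial\phi^j$, $j=0,1,\dots,k-1$, are $2\pi$-periodic in $\phi$; hence every boundary term of the form $\bigl[\,(\cdot)\,e^{im\phi}\,\bigr]_0^{2\pi}$ that appears below vanishes.

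Next I would run the induction on the number of integrations by parts. A single integration by parts in $\phi$, using periodicity to annihilate the boundary term, gives $\int_0^{2\pi}(\partial^j u/\partial\phi^j)\,e^{im\phi}\,d\phi = -\tfrac{1}{im}\int_0^{2\pi}(\partial^{j+1}u/\partial\phi^{j+1})\,e^{im\phi}\,d\phi$, so after $k$ such steps
\begin{gather*}
C_{nm} = \Bigl(-\tfrac{1}{im}\Bigr)^{k}\widetilde{C}_{nm},\\
\widetilde{C}_{nm} := \frac{\epsilon_m(n+1)}{\pi}\int_0^1\int_0^{2\pi}\frac{\partial^k u(r,\phi)}{\partial\phi^k}\,e^{im\phi}R_n^m(r)\,r\,d\phi\,dr,
\end{gather*}
that is, $\widetilde{C}_{nm}$ is exactly the $(n,m)$ Zernike coefficient of $\partial^k u/\partial\phi^k$. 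To finish I need $\widetilde{C}_{nm}$ to remain bounded as $m,n\to\infty$. Since $\partial/\partial\phi=-y\,\partial_x+x\,\partial_y$, the operator $\partial^k/\partial\phi^k$ is a differential operator of order $k$ with polynomial, hence bounded on $B(0,1)$, coefficients, so $u\in\mathcal{C}^k(B(0,1))$ forces $\partial^k u/\partial\phi^k\in\mathcal{C}^0(B(0,1))\subset L^2(B(0,1))$. Parseval's identity applied to $\partial^k u/\partial\phi^k$, equivalently the Riemann--Lebesgue statement (\ref{RiemannLebesgue}), then gives $\widetilde{C}_{nm}\to 0$ and in particular $\sup_{n,m}|\widetilde{C}_{nm}|<\infty$. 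Therefore $|C_{nm}|\le C\,m^{-k}$, and the same bound descends to $A_{nm}$ and $B_{nm}$.

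The computation itself is routine; the care goes into two bookkeeping points. First, one must carry the normalization factor $\epsilon_m(n+1)/\pi$ consistently, so that the object $\widetilde{C}_{nm}$ above is genuinely the $(n,m)$ Zernike coefficient of $\partial^k u/\partial\phi^k$ and not off by that factor (the display for $k=2$ just before the theorem statement is slightly ambiguous on this). Second, the coordinate singularity of polar coordinates at $r=0$ is only apparent here: $\partial_\phi$ is a smooth vector field when written in Cartesian coordinates, so no regularity is lost at the origin and $\partial^k u/\partial\phi^k\in\mathcal{C}^0(B(0,1))$ is legitimate; I expect this to be the only place that requires a little care. Finally, it is worth flagging that the stated rate $m^{-k}$ holds up to a multiplicative constant and relies on the convention, used throughout this section, that $m$ and $n$ tend to infinity together; a decay rate expressed purely in the radial degree $n$ would in addition require a radial integration-by-parts or recurrence argument for $R_n^m(r)$, of the kind mentioned in the introduction.
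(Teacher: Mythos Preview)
Your proposal is correct and follows essentially the same approach as the paper: the paper's ``proof'' consists only of the $k=2$ computation displayed immediately before the theorem, iterating integration by parts in $\phi$ and using periodicity to kill boundary terms, with the general case left implicit. Your version is in fact more careful than the paper's sketch, since you explicitly justify the boundedness of the resulting coefficients via Riemann--Lebesgue and address the apparent coordinate singularity at $r=0$ through the Cartesian expression $\partial_\phi=-y\,\partial_x+x\,\partial_y$, both of which the paper leaves unmentioned.
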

\begin{definition}
A function $u : U \to \mathbb{R}$ is said to be H\"{o}lder continuous of order $\lambda$ if for all $x, y \in U$
	$$|u(x) - u(y)| \leq C \|x - y \|^{\lambda}$$
for some constants $\lambda$ and $C,$ where $\| . \|$ is the metric on $U.$
\end{definition}
\begin{theorem} \label{Thm:Decay}
Let $u(r, \phi)$ be H\"{o}lder continuous of order $\lambda \geq 1.$ Then the Zernike coefficients $A_{nm},$ $B_{nm}$ of $u$ decay at least like $m^{-\lambda +1}.$
\end{theorem}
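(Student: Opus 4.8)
The plan is to work directly in the Hilbert space $L^2(B(0,1))$ rather than with the integral formulas for the coefficients, exploiting that $A_{nm} = \langle u,\, {}^e\underbar{U}_n^m \rangle$ and $B_{nm} = \langle u,\, {}^o\underbar{U}_n^m \rangle$, where $\mathcal{B} = \{{}^e\underbar{U}_n^m,\, {}^o\underbar{U}_n^m\}$ is the \emph{orthonormal} basis introduced above; in particular each basis element has unit norm, radial part $R_n^m(r)$, and angular part $\cos m\phi$ (respectively $\sin m\phi$). The key device is the angular shift operator $T_\tau$ defined by $(T_\tau u)(r,\phi) := u(r,\phi+\tau)$. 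Since the weight $r\,dr\,d\phi$ is independent of $\phi$ and every function in sight is $2\pi$-periodic in $\phi$, a change of variables shows that $T_\tau$ is an isometry of $L^2(B(0,1))$. First I would record the ``half-period'' identity: because $\cos\!\big(m(\phi+\tfrac{\pi}{m})\big) = -\cos m\phi$ and $\sin\!\big(m(\phi+\tfrac{\pi}{m})\big) = -\sin m\phi$, that same change of variables gives $\langle T_{\pi/m}u,\, {}^e\underbar{U}_n^m \rangle = -A_{nm}$ and $\langle T_{\pi/m}u,\, {}^o\underbar{U}_n^m \rangle = -B_{nm}$, whence
\[
A_{nm} = \tfrac12 \big\langle u - T_{\pi/m}u,\ {}^e\underbar{U}_n^m \big\rangle,
\qquad
B_{nm} = \tfrac12 \big\langle u - T_{\pi/m}u,\ {}^o\underbar{U}_n^m \big\rangle .
\]

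Next I would apply the Cauchy--Schwarz inequality together with $\|{}^e\underbar{U}_n^m\| = \|{}^o\underbar{U}_n^m\| = 1$, which yields $|A_{nm}|,\, |B_{nm}| \le \tfrac12 \|u - T_{\pi/m}u\|$ with no surviving dependence on $n$ or on the prefactors $\epsilon_m(n+1)/\pi$ and $R_n^m(r)$. It then remains to estimate $\|u - T_{\pi/m}u\|^2 = \int_0^1 \int_0^{2\pi} |u(r,\phi) - u(r,\phi + \tfrac{\pi}{m})|^2\, r\, d\phi\, dr$. The Euclidean distance in the disk between $(r,\phi)$ and $(r,\phi + \tfrac{\pi}{m})$ equals $2r\sin(\pi/(2m)) \le \pi/m$, so H\"older continuity of order $\lambda$ bounds the integrand pointwise by $C^2 (\pi/m)^{2\lambda}$; since $\int_0^1 \int_0^{2\pi} r\, d\phi\, dr = \pi$, this gives $\|u - T_{\pi/m}u\| \le C\sqrt{\pi}\,(\pi/m)^{\lambda}$. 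Combining the two steps yields $|A_{nm}|,\, |B_{nm}| = O(m^{-\lambda})$, which is in particular $O(m^{-\lambda+1})$ as asserted; and since $m$ and $n$ tend to infinity at the same rate, the bound is equally a statement about decay in $n$. (The regime $\lambda > 1$ carries no content: on the convex set $B(0,1)$ a H\"older-$\lambda$ function with $\lambda > 1$ must be constant, so every coefficient with $(m,n) \ne (0,0)$ vanishes; the substantive case is $\lambda = 1$.)

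I do not anticipate a real obstacle. The one point demanding care is the bookkeeping in the change of variables — one must use $2\pi$-periodicity in $\phi$ both to keep the integration over $[0,2\pi]$ and to see that $T_\tau$ is norm-preserving — and the observation that drives the whole argument is simply that the basis functions in $\mathcal{B}$ are normalized, so the factors $\epsilon_m(n+1)/\pi$ and $R_n^m(r)$ that would otherwise have to be controlled never appear. If one instead insisted on arguing from the integral formula for $A_{nm}$, the parallel computation would leave behind a factor $\tfrac{\epsilon_m(n+1)}{\pi}\int_0^1 |R_n^m(r)|\, r\, dr$; by Cauchy--Schwarz and the orthogonality relation $\|R_n^m\|^2 = 1/(2n+2)$ this is $O\big(\sqrt{n+1}\big)$, which still delivers the stated $m^{-\lambda+1}$ bound once $n = O(m)$, but the Hilbert-space route is both cleaner and sharper.
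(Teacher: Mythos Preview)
Your argument is correct and shares the paper's central device: the half-period shift $\phi \mapsto \phi + \pi/m$, which flips the sign of the angular factor and represents the coefficient as an integral against the difference $u(r,\phi) - u(r,\phi-\pi/m)$. Where you diverge is in how that integral is controlled. The paper stays with the integral formula, bounds $|e^{im\phi}R_n^m(r)r| \le 1$ pointwise, and is left with the prefactor $\epsilon_m(n+1)$; invoking $n \asymp m$ then costs one power of $m$ and yields $|C_{nm}| \lesssim (n+1)m^{-\lambda} \asymp m^{-\lambda+1}$. You instead pair the difference against the \emph{normalized} basis element and apply Cauchy--Schwarz, so the factor $(n+1)$ and the radial polynomial disappear entirely, giving the cleaner $O(m^{-\lambda})$. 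Your closing remark already anticipates the paper's route and correctly notes that even the integral-formula computation, done with Cauchy--Schwarz in $r$ and $\|R_n^m\|^2 = 1/(2n+2)$, improves the paper's bound to $O(\sqrt{n+1}\,m^{-\lambda})$.

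One bookkeeping caution: the identification $A_{nm} = \langle u,\, {}^e\underbar{U}_n^m\rangle$ that you (and the paper) use is off by the normalization factor $\sqrt{\epsilon_m(n+1)/\pi}$, since the original $A_{nm}$ is the coefficient against the \emph{unnormalized} function $R_n^m(r)\cos m\phi$. With that factor restored your Hilbert-space argument gives $|A_{nm}| = O(\sqrt{n+1}\,m^{-\lambda}) = O(m^{-\lambda + 1/2})$, still strictly sharper than the stated theorem and still obtained without ever touching the pointwise size of $R_n^m$.
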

\begin{proof}
Consider
	\begin{equation} \label{eq:ZernikeCoeff}
	C_{nm} := \frac{\epsilon_m(n+1)}{\pi} \int_0^1 \int_0^{2\pi} u(r, \phi) e^{im \phi}  R_n^m(r)(r) r \ d \phi \ dr.
	\end{equation}
Rewriting (\ref{eq:ZernikeCoeff}) gives
	\begin{eqnarray}
	C_{nm} &=& -\frac{\epsilon_m(n+1)}{\pi} \int_0^1 \int_0^{2\pi} u(r, \phi) e^{im \phi} e^{i \pi} R_n^m(r) r \ d \phi \ dr 																				 \nonumber \\
		&=& -\frac{\epsilon_m(n+1)}{\pi} \int_0^1 \int_0^{2\pi} u(r, \phi) e^{im (\phi+ \pi/m)} R_n^m(r) r \ d \phi \ dr
																			\nonumber \\
		&=& -\frac{\epsilon_m(n+1)}{\pi} \int_0^1 \int_{\pi/m}^{2\pi + \pi/m} u(r, \alpha - \pi/m) e^{im \alpha} R_n^m(r) r \ d 																		 \alpha \ dr \nonumber \\
		&=& -\frac{\epsilon_m(n+1)}{\pi} \int_0^1 \int_0^{2\pi} u(r, \alpha - \pi/m) e^{im \alpha} R_n^m(r) r
		\ d \alpha \ dr. \label{eq:ZernikeCoeffsNew}
	\end{eqnarray}
Adding (\ref{eq:ZernikeCoeff}) and (\ref{eq:ZernikeCoeffsNew}) gives
	\begin{eqnarray*}
	C_{nm} &=& \frac{\epsilon_m(n+1)}{2\pi} \int_0^1 \int_0^{2\pi} \left[ u(r, \phi) - u(r, \phi - \pi/m)\right] e^{im \phi}R_n^m(r) r \ \ d \phi dr \\
	\textrm{or,} \ |C_{nm}| &\leq& \frac{\epsilon_m(n+1)}{2\pi} \int_0^1 \int_0^{2\pi} \left| u(r, \phi) - u(r, \phi - \pi/m)\right|
	|R_n^m(r) r| \ d \phi \ dr \\
	&\leq & C \frac{\epsilon_m(n+1)}{2\pi} \left(\frac{\pi}{m}\right)^{\lambda} \int_0^1 \int_0^{2\pi} |R_n^m(r) r| \ d \phi \ dr \\
	&=& C \epsilon_m(n+1) \left(\frac{\pi}{m}\right)^{\lambda} \int_0^1 |R_n^m(r) r| \ dr \\
	&\leq& C \epsilon_m(n+1) \left(\frac{\pi}{m}\right)^{\lambda},
	\end{eqnarray*}
where the last inequality follows from the fact that $|R_n^m(r) r|  \leq 1,$ since $|R_n^m(r)|  \leq 1$ and $0 \le r \le 1,$ see \cite{Prata:1989}.
Looking at the real and imaginary parts of $C_{nm}$ gives the desired result. Note that $m$ and $n$ tend to infinity at the same rate.
\end{proof}
%
%%%%%%%%%%%%%%%%%%%%%%%%%%%%%%%%%%%%%%%%%%%%%%%%%%%%%%%%
\section{Appendix}
\label{Appendix}
%%%%%%%%%%%%%%%%%%%%%%%%%%%%%%%%%%%%%%%%%%%%%%%%%%%%%%%%
To derive the IOM for the %Zernike radial polynomials
radial parts of Zernike  polynomials, the recurrence relation stated in (\ref{eq:ZernikePolyRecurrence}) will be used. For convenience, this is again provided below.
%*
\begin{equation} \label{eq:ZernikePolyRecurrence_1}
\int_{r_0}^r \left[R_n^m(r) + R_{n}^{m+2}(r)\right] \ dr = \frac{1}{n+1} \left[R_{n+1}^{m+1}(r) - R_{n-1}^{m+1}(r)\right]\Big|_{r_0}^r .
\end{equation}
If all radial polynomials up to degree $n$ are used then the basis vector for the %Zernike radial polynomials
radial parts of Zernike  polynomials is
\begin{equation}\label{RadialBasis}
R(r) = [R_0^0(r), R_1^1(r), R_2^0(r), R_2^2(r), R_3^1 (r), R_3^3(r), \ldots,  R_n^m(r)], \ n \in \mathbb{N} \cup \{0\}, \ 0 \leq n-m, \ n - m \ \textrm{even} .
\end{equation}
Let $i$ be the degree of a radial polynomial, and let $p_i$ denote the total number of polynomials of degree $i.$  Due to the special structure of the radial polynomials as described in (\ref{Sol:Hypergeometric}) of Section~\ref{sec:Intro}, the value of $p_i$ is determined as follows. For a given non-negative integer $i,$ the value of $p_i$ is the number of integers $j$ for which $i - j$ is even and non negative.
For $i = 0, 1, \ldots, n,$ let $\Delta_{i+1}$ be the $p_i \times p_i$ matrix with ones along the diagonal and also above the main diagonal, and zeros elsewhere. That is,
	$$
	\Delta_{i+1} =
	\left[
	\arraycolsep=2.4pt\def\arraystretch{.75}
	\begin{array}{ccccc}
	1 & 1 & 0 & \cdots & 0 \\
	0 & 1 & 1 &\cdots & 0 \\
	\vdots & \vdots & \ddots & \ddots & \vdots \\
	0 & 0 & \cdots & 1 & 1 \\
	0 & 0 & \cdots & 0 & 1
	\end{array}
	\right]_{p_i \times p_i}
	$$
whose inverse is
	$$
	\Delta_{i+1}^{-1} =
	\left[
	\arraycolsep=2.4pt\def\arraystretch{.75}
	\begin{array}{ccccc}
	1 & -1 & & \cdots & (-1)^{p_i-1} \\
	0 & 1 & -1 &\cdots & (-1)^{p_i-2}  \\
	\vdots & \vdots & \ddots & \ddots & \vdots \\
	0 & 0 & \cdots & 1 & -1 \\
	0 & 0 & \cdots & 0 & 1
	\end{array}
	\right]_{p_i \times p_i}
	$$
Let $E_{r1}$ be a block diagonal matrix of the form
	$$
	E_{r1}
	=
	\left[
    	\arraycolsep=2.4pt\def\arraystretch{.75}
    	\begin{array}{cccc}
    	\Delta_1 &  & & \\
    	& \Delta_2 &  &   \\
     	& & \ddots &  \\
     	&  &  & \Delta_{n+1} \\
   	 \end{array}
    	\right] .
	$$
Denote $\int_{r_0}^r R(r) \ dr$ by $E_{r r _0}.$ Then from (\ref{eq:ZernikePolyRecurrence_1}) one can write
	$$
	E_{r1} E_{rr_0} = E_{r2}.
	$$
The structure of the matrix $E_{r1}$ has been described above. The matrix $E_{r2}$ is a block matrix of order $n+1,$ $n$ being the degree of $R_n^m(r).$ Each block in  $E_{r2}$ is a submatrix $\Gamma_{k l}$ and can be represented as
	$$
	E_{r2} = \left[ \Gamma_{kl} \right]
	$$
	where $\Gamma_{i+1, j+1}$ is of size $p_i \times p_j.$
For example, let $n = 5.$ Then $E_{r2}$ is a $6 \times 6$ block matrix where all  the blocks are null matrices except for the following:
$$
\Gamma_{11} = [-R_1^1(r_0)] , \ \Gamma_{12} = [1], \ \Gamma_{21} = [-\frac12 R_2^2(r_0)],
\
\Gamma_{23} =
\begin{bmatrix}
0 & \frac12
\end{bmatrix},
\Gamma_{31} =
 \begin{bmatrix}
-\frac13[R_3^1(r_0) - R_1^1(r_0)] \\
-\frac13 R_3^3(r_0)
\end{bmatrix} , \
\Gamma_{32} =
\begin{bmatrix}
-\frac13 \\
0
\end{bmatrix},
$$
$$
\Gamma_{34} =
\begin{bmatrix}
\frac13 & 0 \\
0 & \frac13
\end{bmatrix},
\Gamma_{41} =
 \begin{bmatrix}
-\frac14[R_4^2(r_0) - R_2^2(r_0)] \\
-\frac14 R_4^4(r_0)
\end{bmatrix},
\Gamma_{43} =
\begin{bmatrix}
0 & -\frac14 \\
0 & 0
\end{bmatrix},
\Gamma_{45}
=
\begin{bmatrix}
0 & \frac14 & 0 \\
0 & 0 & \frac14
\end{bmatrix},
$$
$$
\Gamma_{51} =
\begin{bmatrix}
-\frac15 [R_5^1(r_0) - R_3^1(r_0)] \\
-\frac15 [R_5^3(r_0) - R_3^3(r_0)] \\
-\frac15 R_5^5(r_0)
\end{bmatrix},
\Gamma_{54} =
\begin{bmatrix}
-\frac15 & 0 \\
0 & -\frac15 \\
0 & 0
\end{bmatrix},
\Gamma_{56}
=
\begin{bmatrix}
\frac15 & 0 & 0 \\
0 & \frac15 & 0 \\
0 & 0 & \frac15
\end{bmatrix},
$$
$$
\Gamma_{61} =
\begin{bmatrix}
-\frac16 [R_6^2(r_0) - R_4^2(r_0)] \\
-\frac16 [R_6^4(r_0) - R_4^4(r_0)] \\
-\frac16 R_6^6(r_0)
\end{bmatrix},
\Gamma_{65}
=
\begin{bmatrix}
0 & - \frac16 & 0\\
0 & 0 & - \frac16\\
0 & 0 & 0
\end{bmatrix}.
$$
For $0 \leq m \leq n,$ if one wants to form the integration operational matrix of  $\int_{r_0}^r R^m_n (r) \ dr$ from (\ref{eq:ZernikePolyRecurrence_1}) using only radial polynomials up to degree $n$, then usually the terms of higher degree are neglected. When $n$ is odd, then
for the integrals
\\
$\int_{r_0}^r R_n^n(r) \ dr,\quad $ $\int_{r_0}^r R_n^{n-2}(r) \ dr,\quad$ $\ldots,$ $\quad\int_{r_0}^r R_n^1(r) \ dr,$
\\
the terms
\\
$\frac{1}{n+1}R_{n+1}^{n+1}(r),\quad$ $\frac{1}{n+1}[R_{n+1}^{n-1}(r) - R_{n+1}^{n+1}(r)],$ $\quad\ldots,\quad$ $\frac{1}{n+1}[R_{n+1}^{2}(r) - R_{n+1}^{4}(r) + \cdots + (-1)^{\frac{n-1}{2}} R_{n+1}^{n+1}]$
\\
are neglected, respectively. On the other hand, when $n$ is even, then for the integrals
\\
$\int_{r_0}^r R_n^n(r) \ dr,\quad$ $\int_{r_0}^r R_n^{n-2}(r) \ dr,$ $\quad \ldots,\quad$ $\int_{r_0}^r R_n^0(r) \ dr,$
\\
the terms,
\\
$\frac{1}{n+1}R_{n+1}^{n+1}(r),\quad$ $\frac{1}{n+1}[R_{n+1}^{n-1}(r) - R_{n+1}^{n+1}(r)],$ $\quad \ldots,\quad $ $\frac{1}{n+1}[R_{n+1}^{0}(r) - R_{n+1}^{2}(r) + \cdots + (-1)^{\frac{n}{2}} R_{n+1}^{n+1}]$
\\
are neglected, respectively. In compact form, when one wishes to use only radial polynomials up to degree $n$ to evaluate
	$$
	\int_{r_0}^{r} R_n^{n - 2i} (r) \ dr,
	$$
then one neglects
	$$
	\frac{1}{n+1}\left[R_{n+1}^{n - 2i + 1} - R_{n+1}^{n - 2i + 3} + \cdots + (-1)^{i}R_{n+1}^{n+1}(r)\right],d
	$$
where
	$$
	i = \left\{
	\begin{array}{ll}
	0, 1, \ldots, \frac{n-1}{2} & \textrm{when $n$ is odd,} \\
	0, 1, \ldots, \frac{n}{2} & \textrm{when $n$ is even.}
	\end{array}
	\right.
	$$
For better accuracy of results, these neglected terms of degree greater than $n$ can then be represented in terms of the radial polynomials appearing in $R(r)$ of (\ref{RadialBasis}) as mentioned in Remark~\ref{Rem:Approximation}.
%%%%%%%%%%%%%%%%%%%%
\section{Conclusion}
\label{Conclusion}
%%%%%%%%%%%%%%%%%%%%
It is established in this paper that numerical solutions of partial differential equations in circular regions can be successfully done  using %Zernike circle polynomials
Zernike polynomials and IOMs which can otherwise be challenging using other orthogonal polynomials. In comparison, using multidimensional block pulse functions and  OSOMRI (one shot operational matrix for repeated integrations), with extensive computations, it was found earlier in \cite{Rao:1983} that solutions of second order PDEs do not promise numerical stability in all cases.
In solving the second order PDE by Zernike polynomials of a particular order  $(m,n),$  if the terms of order higher than $n$ are neglected in deriving operational matrices, the obtained solution is far from the actual one. By including these higher order terms as projections on the space generated by the lower order terms (see (\ref{ApproxLagInterpolation})), the solution of the second order PDE  is comparable with the true solution, and accuracy in the first order case is vastly improved.

In solving PDEs using IOM and block pulse functions, simple recursive methods could be developed in \cite{Rao:1983} due to the disjoint nature of the block pulse functions. Unfortunately, this cannot be done with other orthogonal polynomials extensively used in \cite{Datta:1995}  including the ones used here.
The integration operational matrices for double integration, $E_{Drr_0}$ and $E_{D\phi \phi_0},$ are computed as $E_{rr_0}^2$ and $E_{\phi \phi_0}^2$ respectively, however, this leads to an accumulation of errors at each stage of the integration  process. This can be improved by developing the IOM in the final stage known as OSOMRI as developed in \cite{Rao:1983}. However, the effort does not seem worthwhile because the improvement occurs in the higher order of decimal places and in many practical cases the solution without OSOMRI may serve the purpose. Another thing to note is that the second order PDEs solved here have boundary conditions that are discontinuous at two points on the circle. Due to these discontinuities, the solution shows oscillations known as Gibbs-Wilbraham phenomenon as is evident from the discontinuity of the contour lines of IOM solutions in contrast with those of the exact analytical solution in Figure~\ref{fig:SOfigure4}. In Example~\ref{EX:SOPDE}, a Laplace equation is solved \textit{with discontinuous Dirichlet and Neumann BCs, and as these discontinuous functions cannot be defined at some of the Chebyshev or Gauss-Lobatto points, the much acclaimed pseudo-spectral methods are not directly applicable to such problems.} For the purpose of demonstration of our method, examples selected are simple in nature.

 Our prime objective is to highlight how %Zernike circle polynomials
 Zernike polynomials can be directly applied to solve PDEs with discontinuous boundary conditions. There are other methods to numerically solve PDEs as outlined in Section~\ref{sec:Intro} and depending on the context and situation of the physical problems one may select an appropriate method for which the proposed approach is offered here as a potential candidate.

% In our method we have to solve a sparse system $A\mathbf{x}= \mathbf{b}$, in which the majority of the components of $\mathbf{x}$ are zeros, only few are non-zeros to be called sparse points. The reason for using $l_1$ norm to find a sparse solution is due to its special shape. It has spikes that happen to be at sparse points. Using it to touch the solution surface will very likely to find a touch point on a spike tip and thus a sparse solution gives more accurate surface than $l_2$ solution via QR algorithm which is also confirmed in our experimentation.

An extremely important problem for future investigation is the parameter estimation of distributed parameter systems that is a challenging research area for control system engineers. In this regard, another promising area of future research is to use Zernike polynomials in rectangular coordinates to solve PDEs with rectangular boundaries  and conversely to estimate the parameters in such regions if the input and the response are known.
\section*{Acknowledgment}
%Insert the Acknowledgment text here.
The authors are immensely grateful to the two anonymous referees for their insightful comments and valuable suggestions that vastly improved the quality of this article.

\end{document}